\theoremstyle{plain}
\newtheorem{main}{Theorem}
\newtheorem{theorem}{Theorem}[section]
\newtheorem{lemma}[theorem]{Lemma}
\newtheorem{proposition}[theorem]{Proposition}
\newtheorem{corollary}[theorem]{Corollary}
\theoremstyle{remark}
\newtheorem{remark}[theorem]{Remark}
\newtheorem{example}[theorem]{Example}
\newtheorem{conjecture}[theorem]{Conjecture}
\newcommand{\quand}{\quad\text{and}\quad}
\newcommand{\Leb}{\operatorname{vol}}
               \def\cal{\mathcal}
           \def\ea{\end{array}}
          \def\ec{\end{center}}
     \def\ed{\end{description}}
        \def\ee{\end{equation}}
       \def\eea{\end{eqnarray}}
     \def\eeaa{\end{eqnarray*}}
 \def\et{\end{thebibliography}}
\def\Diff{{\rm Diff}}
\def\supp{\operatorname{supp}}
\def\cPO{{\mathcal{V}}}
\def\cU{{\mathcal U}}
\def\cV{{\mathcal V}}
\def\cB{{\mathcal B}}
\def\cC{{\mathcal C}}
\def\cF{{\mathcal F}}
\def\cP{{\mathcal P}}
\def\cS{{\mathcal S}}
\def\cW{{\mathcal W}}
\def\cBk{{\mathcal B}^k}
\def\id{\operatorname{id}}
\def\Nc{N/\cW^c}
\def\dc{d_c}
\def\fc{f_c}
\def\vep{\varepsilon}
\def\TT{{\mathbb T}}
\def\RR{{\mathbb R}}
\def\ZZ{{\mathbb Z}}
\def\hW{\widehat{W}}
\title[Physical measures and absolute continuity]
{Physical measures and absolute continuity\\ for one-dimensional center direction}
\author{Marcelo Viana$^1$ and Jiagang Yang$^{2,1}$}
\date{\today}
\thanks{Partially supported by CNPq and PRONEX-Dynamical Systems.
        J.Y. was supported by scholarships from TWAS-CNPq and FAPERJ.}
\thanks{$^1$IMPA, Est. D. Castorina 110, 22460-320 Rio de Janeiro, Brazil}
\thanks{$^2$Instituto de Matem\'atica, Universidade Federal Fluminense, Niter\'oi, Brazil}
\email{viana\@@impa.br}\urladdr{www.impa.br/~viana/}
\email{yangjg\@@impa.br}
\begin{document}

\begin{abstract}
For a class of partially hyperbolic $C^k$, $k > 1$ diffeomorphisms with circle center
leaves we prove existence and finiteness of physical (or Sinai-Ruelle-Bowen)
measures, whose basins cover a full Lebesgue measure subset of the ambient manifold.
Our conditions contain an open and dense subset of all $C^k$ partially hyperbolic
skew-products on compact circle bundles.

Our arguments blend ideas from the theory of Gibbs states for diffeomorphisms with
mostly contracting center direction together with recent progress in the theory of
cocycles over hyperbolic systems that call into play geometric properties of invariant
foliations such as absolute continuity. Recent results show that absolute continuity
of the center foliation is often a rigid property among volume preserving systems.
We prove that this is not at all the case in the dissipative setting,
where absolute continuity can even be robust.
\end{abstract}

\maketitle
\setcounter{tocdepth}{1}
\tableofcontents

\section{Introduction}

Let $f:N\to N$ be a diffeomorphism on some compact Riemannian manifold $N$.
An invariant  probability $\mu$ is a \emph{physical (Sinai, Ruelle, Bowen) measure}
for $f$ if the set of  points $z\in N$ for which
\begin{equation}\label{eq.SRBmeasure}
\frac 1n \sum_{j=0}^{n-1} \delta_{f^i(z)} \to \mu
\quad \text{(in the weak$^*$ sense)}
\end{equation}
has positive volume. This set is denoted $B(\mu)$ and called the \emph{basin}
of $\mu$. A program for investigating the physical measures of partially hyperbolic
diffeomorphisms was initiated by Alves, Bonatti, Viana in \cite{ABV00,BoV00},
who proved existence and finiteness when $f$ is either ``mostly expanding''
(asymptotic forward expansion) or ``mostly contracting'' (asymptotic forward
contraction) along the center direction.

In this paper we analyze the existence and finiteness problem without a priori
conditions on the behavior along the center direction, in the case when the center
bundle has dimension $1$. Our results are illustrated by the following example.

Suppose $N=M\times S^1$, for some compact manifold $M$,
and $f_0:N\to N$ is a partially hyperbolic \emph{skew-product}
\begin{equation}\label{eq.skewproduct}
f_0:M\times S^1\to M\times S^1, \quad f_0(x,\theta)=(g_0(x),h_0(x,\theta))
\end{equation}
with center bundle $E^c$ coinciding with the vertical direction
$\{0\}\times T S^1$ at every point.
This implies $g_0$ is an Anosov diffeomorphism, and we also take it to be
transitive (all known Anosov diffeomorphisms being transitive).
Assume $f_0$ is of class $C^k$ for some $k>1$, not necessarily an integer.

\begin{main}\label{t.main1}
There exists a $C^k$ neighborhood $\cU_0$ of $f_0$ such that for every $f\in\cU_0$
which is accessible and whose center stable foliation is absolutely continuous
there exists a finite number of physical measures.
These measures are ergodic, the union of their basins has full volume in $N$,
and the center Lyapunov exponents are either negative or zero.
In the latter (zero) case the physical measure is unique.
\end{main}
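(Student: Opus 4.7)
My plan is to study the class of $u$-Gibbs states for $f$, meaning $f$-invariant probabilities whose disintegrations along strong-unstable plaques are absolutely continuous with respect to Lebesgue on those plaques. Bounded distortion along strong-unstable plaques ensures that every weak-$*$ accumulation point of the Ces\`aro averages $n^{-1}\sum_{i=0}^{n-1} f^i_*\Leb$ is a $u$-Gibbs state, so every physical measure of $f$ must arise as an ergodic component of some $u$-Gibbs state. I would first set up this framework and reduce the problem to analyzing ergodic $u$-Gibbs states.

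For any ergodic $u$-Gibbs state $\mu$, the center Lyapunov exponent $\lambda^c(\mu)=\int \log\|Df\mid E^c\|\,d\mu$ is well defined and constant $\mu$-almost everywhere. The first key step is to show that $\lambda^c(\mu)\le 0$. The idea is that if $\lambda^c(\mu)>0$ then Pesin theory yields two-dimensional center-unstable Pesin manifolds along which $\mu$ would inherit absolutely continuous conditionals from the $u$-Gibbs property. Under the hypothesis that $\cF^{cs}$ is absolutely continuous, and using the fact that $E^c$ remains close to the vertical direction $\{0\}\times TS^1$ throughout $\cU_0$, a Fubini-type comparison of $\mu$ with Lebesgue along $cs$-leaves should produce a contradiction. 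Hence every ergodic $u$-Gibbs state has $\lambda^c\le 0$.

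If every ergodic $u$-Gibbs state satisfies $\lambda^c<0$, I apply the framework of mostly contracting center due to Bonatti--Viana: Pesin stable manifolds imply that every such $u$-Gibbs state is in fact a physical measure, accessibility forces finiteness of the list of these states, and absolute continuity of $\cF^{cs}$ promotes the full-measure basin statement from $cs$-leaves to full Lebesgue measure on $N$. If instead some ergodic $u$-Gibbs state $\mu$ has $\lambda^c=0$, I would invoke the invariance principle for linear cocycles over hyperbolic systems (in the spirit of Ledrappier and Avila--Viana) applied to the center derivative cocycle $Df\mid E^c$: vanishing of the exponent together with the $u$-Gibbs property and accessibility should force $\mu$ to be the only $u$-Gibbs state of $f$, yielding uniqueness of the physical measure, while the full-volume basin again follows from absolute continuity of $\cF^{cs}$.

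I expect the hardest step to be ruling out $\lambda^c>0$, since no a priori mostly contracting hypothesis is available and one must exploit the full strength of the standing assumptions: proximity to the skew-product $f_0$, accessibility of $f$, and absolute continuity of $\cF^{cs}$. Quantitative estimates ensuring uniform closeness of $E^c$ to the vertical direction over all of $\cU_0$, together with the very constrained structure of zero-exponent $u$-Gibbs states in near-skew-product settings, will be essential. A secondary technical difficulty is showing in the zero-exponent case that the invariance principle actually applies to the one-dimensional cocycle $Df\mid E^c$ rather than just to cocycles with values in compact groups, which may require separate treatment of the contracting and preserving behaviors along the circle fibers.
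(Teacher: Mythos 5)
Your proposal reproduces the paper's overall skeleton (Gibbs $u$-states, ruling out positive center exponents, then a dichotomy between negative exponents/mostly contracting and zero exponent treated by an invariance principle), but the two key steps contain genuine gaps. The most serious one is the way you rule out $\lambda^c(\mu)>0$. The $u$-Gibbs property gives absolutely continuous conditionals only along \emph{strong} unstable plaques; it does not follow that a $u$-Gibbs state with positive center exponent has absolutely continuous conditionals along the two-dimensional Pesin center-unstable manifolds --- upgrading from $u$-Gibbs to SRB along full Pesin unstable manifolds is exactly the hard issue (and by Ruelle--Wilkinson the disintegration along the center is typically atomic). Moreover, even if such conditionals existed, they would produce no contradiction with absolute continuity of $\cW^{cs}$: Lebesgue measure for a volume preserving system has both properties simultaneously, so a ``Fubini-type comparison'' cannot be the mechanism. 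The paper's argument is entirely different: assuming all center exponents of some ergodic Gibbs $u$-state are positive, a Pliss-time and bounded-leaf-volume argument (Proposition~\ref{p.finiteGamma}) shows that the set $\Gamma$ of points with backward center contraction meets every center circle in a uniformly bounded finite set; then full $\Leb^u$-measure subsets of unstable disks consisting of points of $\Gamma$ are transported by the absolutely continuous center stable holonomy, and accessibility is used along an $su$-path to conclude that \emph{every} point of a periodic center leaf is periodic with uniformly bounded period (Lemmas~\ref{l.claim1} and~\ref{l.claim2}); this forbids hyperbolic periodic points in the support and contradicts Katok's theorem for hyperbolic measures. None of these ingredients appears in your outline, and without them the step $\lambda^c\le 0$ is unsupported.

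In the zero-exponent case you correctly sense the difficulty but do not resolve it: applying an invariance principle to the linear cocycle $Df\mid E^c$ (a one-dimensional linear cocycle) yields nothing beyond the vanishing of its exponent. What the paper uses is the Avila--Viana invariance principle for \emph{smooth} cocycles, applied to the action of $f$ on the center circles fibered over the hyperbolic quotient map $f_c$, and this requires two inputs missing from your plan: (i) local product structure of the projected measure $(\pi_c)_*m$, which is itself proved from absolute continuity of $\cW^{cs}$ (Proposition~\ref{p.ustates_product}); and (ii) the passage from the resulting continuous bi-invariant disintegration to conditional measures equivalent to Lebesgue on the fibers with uniformly bounded densities, which is where accessibility and the bounded Jacobians of strong holonomies restricted to center (un)stable leaves enter (Proposition~\ref{p.elliptic}); only then do uniqueness of the Gibbs $u$-state, the invariant Lipschitz metric, and the full-volume basin follow. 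Saying that ``vanishing exponent plus accessibility should force uniqueness'' states the goal rather than proves it. A more minor inaccuracy: in the mostly contracting case, finiteness of the physical measures comes from the Bonatti--Viana/Andersson theory of Gibbs $u$-states with negative center exponents (disjoint $u$-saturated supports, Pesin stable laminations), not from accessibility, which in that alternative plays no role.
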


The subset of accessible diffeomorphisms is $C^1$ open and $C^k$ dense in the
neighborhood of $f_0$ (Theorem 1.5 of Ni\c{t}ic\v{a}, T\"{o}r\"{o}k~\cite{NT01}).
Absolute continuity is also quite common in this context as we are going to see.
That is surprising, since Avila, Viana, Wilkinson~\cite{AVW1} have recently shown
that absolute continuity of the center foliation is a rigid property for
\emph{volume preserving} perturbations of skew-products. In contrast, here we prove

\begin{main}\label{t.main2}
Suppose $f_0$ exhibits some periodic vertical leaf $\ell$ such that $f_0^{per(\ell)} \mid \ell$
is Morse-Smale with a unique periodic attractor and repeller. Then $f_0$ is in the closure of an
open set $\cV$ of $C^k$ diffeomorphisms such that for every $f\in\cV$,
\begin{itemize}
\item the center stable, the center unstable, and the center foliation are absolutely continuous
\item both $f$ and its inverse have a unique physical measure, whose basin has full Lebesgue
measure in $N$.
\end{itemize}
\end{main}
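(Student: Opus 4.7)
The strategy is to build $\cV$ as a $C^k$ open neighborhood of an explicit perturbation $f_1$ of $f_0$, inside the set of $C^k$ diffeomorphisms that are accessible and retain a robust Morse-Smale periodic center leaf. Accessibility is arranged using Ni\c{t}ic\v{a}-T\"{o}r\"{o}k, and the Morse-Smale condition on $\ell$ is $C^1$-robust, so any nearby $f$ has a periodic center leaf $\ell_f$ close to $\ell$ with a unique periodic attractor $p_f$ and a unique periodic repeller $q_f$, both hyperbolic along the center direction. An additional small perturbation, supported in a neighborhood of $\ell_f$, will be used if needed to ensure absolute continuity of the center foliation; I will then take $\cV$ small enough that all of these properties persist. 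Because the total perturbation from $f_0$ can be made arbitrarily small, $f_0$ lies in the closure of $\cV$, as required.

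For uniqueness of the physical measures, the single periodic sink on $\ell_f$ makes $f$ strongly mostly contracting along the center, so that the center Lyapunov exponent is negative Lebesgue-a.e. Applying Theorem~\ref{t.main1} to $f$ (which is accessible with absolutely continuous $W^{cs}$) produces finitely many ergodic physical measures whose basins cover a full Lebesgue-measure set. Accessibility, combined with the fact that $\ell_f$ contains only one attractor, forces these measures to coincide: otherwise the basins, being essentially saturated by strong stable and strong unstable leaves, could be joined along an accessibility chain and the single-attractor structure would yield a contradiction. The symmetric argument applied to $f^{-1}$ and to the unique repeller $q_f$ produces a unique physical measure for the inverse.

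Absolute continuity of $W^{cs}$ and $W^{cu}$ for $f$ close to the smooth skew-product $f_0$ should follow from classical Pugh-Shub/Brin-Pesin-type arguments: since the center bundle is one-dimensional, the strong stable holonomies inside center-stable leaves are absolutely continuous with bounded Jacobian, and analogously within center-unstable leaves; a Fubini-type argument then gives absolute continuity of each foliation. The main obstacle is absolute continuity of the center foliation $W^c$. Since $W^c = W^{cs} \cap W^{cu}$, the question reduces to absolute continuity of the sub-foliation of $W^{cs}$ leaves by center leaves, equivalently to absolute continuity of the holonomy along strong stable (or strong unstable) leaves between nearby center leaves. For the skew-product $f_0$ this is trivial because everything is a smooth product; for nearby $f$ one must argue that the Morse-Smale structure with unique sink and source provides enough local rigidity at $p_f$ and $q_f$ to preserve absolute continuity robustly. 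I expect this to be the hard step: it will require a quantitative control of how strong stable and unstable holonomies distort center leaves, exploiting both the spectral gap between the center and the strong bundles and the H\"older excess afforded by $k>1$, and then propagating the estimates from a neighborhood of $\ell_f$ to the whole manifold via accessibility. This is precisely the point at which the dissipative character of the perturbation is used to bypass the Avila-Viana-Wilkinson rigidity obstruction in the volume-preserving setting.
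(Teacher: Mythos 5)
Your outline of the measure-theoretic part (accessibility via Ni\c{t}ic\v{a}--T\"or\"ok, mostly contracting center forced by the hyperbolic sink on the periodic leaf, finiteness from Theorem~\ref{t.mainB}, and uniqueness from the single attractor on $\ell$, as in Theorem~\ref{t.mainC}) is essentially the route taken in the paper. The genuine gap is in the absolute continuity part, and it is not a technical detail: you assert that absolute continuity of $\cW^{cs}$ and $\cW^{cu}$ for $f$ near the skew-product ``should follow from classical Pugh--Shub/Brin--Pesin-type arguments'' plus Fubini. That is false. The classical absolute continuity theory applies to the strong stable and strong unstable foliations (and, as used in the paper, to the strong foliations restricted \emph{inside} center stable or center unstable leaves); it says nothing about holonomy \emph{transverse} to $\cW^{cs}$, which is exactly where the nonuniform center behavior enters. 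Indeed, for perturbations of skew-products the center and center stable foliations are typically \emph{not} absolutely continuous (Shub--Wilkinson, Ruelle--Wilkinson; see also Example~\ref{ex.simpleexample} and the robust counterexamples announced in~\cite{VY2}), so no soft argument of the kind you invoke can work, and this is precisely why the statement of Theorem~\ref{t.main2} is surprising. You have also inverted which step is hard: once $\cW^{cs}$ and $\cW^{cu}$ are known to be absolutely continuous, absolute continuity of $\cW^{c}=\cW^{cs}\cap\cW^{cu}$ is immediate from Lemma~\ref{l.csufoliations}; the difficulty you postpone to the center foliation is not where the work lies.

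What is actually needed, and what your proposal does not supply, is the mechanism of Sections~5--6 of the paper. First, the perturbation of $f_0$ must do more than keep $\ell$ Morse--Smale: one perturbs (inside skew-products) to put the periodic center leaf in \emph{general position}, i.e.\ to make the strong stable and strong unstable holonomy images $h^s(a\cup r)$ and $h^u(a\cup r)$ on a homoclinic leaf disjoint. This twisting condition is the dissipative ingredient your sketch gestures at but never formulates. Second, absolute continuity of $\cW^{cs}$ is then proved via Pesin theory for the mostly contracting center: one builds $cs$-blocks (sets of uniform-size Pesin stable disks with uniformly absolutely continuous holonomy, Lemmas~\ref{l.csbox} and~\ref{l.uniform-csbox}), uses general position to produce two blocks whose union has no common gap along the center circle, and uses the Gibbs $u$-state recurrence estimate (Proposition~\ref{p.csboxsection}, with the lower bound of Remark~\ref{r.lowerbound} and a bounded-distortion argument, Lemma~\ref{l.lem2}) to show that iterates of any positive-measure subset of a $u$-disk hit both blocks; this is what transfers positive Lebesgue measure across a $cs$-holonomy (Proposition~\ref{p.generalposition}, Corollary~\ref{c.generalposition}). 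The same applied to $f^{-1}$ gives $\cW^{cu}$, and then Lemma~\ref{l.csufoliations} gives $\cW^{c}$. Without the general position condition and the $cs$-block recurrence argument, your proof of the first bullet of the theorem does not exist, and the quantitative ``rigidity at $p_f$ and $q_f$'' you propose for the center foliation is not a substitute for it.
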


Then the same is true for $f_0= g_0 \times \id$, since it is $C^k$ approximated by diffeomorphisms
as in the hypothesis of the theorem.

Although we are primarily interested in general (dissipative) diffeomorphisms, our methods also shed some
light on the issue of absolute continuity in the volume preserving context. Let $\lambda^c(f)$
denote the integrated center Lyapunov exponent of $f$ relative to the Lebesgue measure.

\begin{main}\label{t.main3}
For any small $C^1$ neighborhood $\cW$ of $f_0=g_0\times \id$ in the space of volume preserving
diffeomorphisms of $N$,
\begin{enumerate}
\item the subset $\cW_0$ of diffeomorphisms $f\in\cW$ such that $\lambda^c(f)\neq 0$ is $C^1$
open and dense in $\cW$; 
\item if $f\in\cW_0$ and $\lambda^c(f)>0$ then the center foliation and the center stable foliation
are not (even upper leafwise) absolutely continuous;
\item there exists a non-empty $C^1$ open set $\cW_1\subset\{f\in \cW_0: \lambda^c(f)>0\}$ such that
the center unstable foliation of every $g\in\cW_1$ is absolutely continuous.
\end{enumerate}
Claims (2) and (3) remain true when $\lambda^c(f)<0$, if one exchanges center stable with center unstable.
Every $C^k$, $k>1$ diffeomorphism $f\in\cW_1$ has a $C^k$ neighborhood $\cW_f$ in the space of all
(possibly dissipative) diffeomorphisms where the center unstable foliation remains absolutely continuous.
\end{main}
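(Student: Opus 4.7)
The plan is to address the three claims in turn, then the robustness statement at the end.

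For Claim (1), the map $f \mapsto \lambda^c(f) = \int_N \log|\det(Df|E^c)|\,d\Leb$ is continuous on $\cW$ in the $C^1$ topology, since $E^c$ varies $C^0$-continuously with $f$ in $C^1$ and the integrand is continuous; so $\cW_0$ is $C^1$ open. For density I would apply the Baraviera--Bonatti perturbation scheme to conservative partially hyperbolic diffeomorphisms with $1$-dimensional center: small conservative perturbations of the derivative at a periodic point change the integrated center exponent, so $\lambda^c$ is nowhere locally constant, and any $f\in\cW$ with $\lambda^c(f)=0$ is $C^1$-approximated by some $\tilde f \in \cW$ with $\lambda^c(\tilde f) \neq 0$.

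For Claim (2), assume $\lambda^c(f) > 0$ and, for contradiction, that the center stable foliation is upper leafwise absolutely continuous. Disintegrating volume along the center stable foliation then gives conditional measures that are absolutely continuous on a positive-measure set of leaves. Inside each center stable leaf, the Pesin stable lamination tangent to $E^s$ has absolutely continuous holonomies (classical Pesin theory, since $E^s$ is uniformly contracted), so quotienting by this stable sub-lamination yields an absolutely continuous measure on the $1$-dimensional center quotient of the leaf. On the other hand, volume preservation together with $\lambda^c(f) > 0$ places us in the regime of the Ruelle--Wilkinson / Hirayama--Pesin dichotomy for $1$-dimensional center direction, which forces the conditional measures of volume along center leaves to be purely atomic, contradicting the absolute continuity just established. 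The same dichotomy applied directly rules out upper leafwise absolute continuity of the center foliation itself, and the $\lambda^c<0$ case follows by replacing $f$ with $f^{-1}$. The main obstacle lies here: one needs to cleanly reduce the higher-dimensional absolute continuity hypothesis on the center stable foliation to the $1$-dimensional Ruelle--Wilkinson statement along center leaves, and carrying out that reduction uniformly over a positive-measure set of leaves is the delicate technical point.

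For Claim (3), I would pick $f\in\cW$ with $\lambda^c(f)>0$ (by Claim (1)), chosen also to be stably ergodic (a $C^1$-open and dense condition near $g_0\times\id$ by Burns--Wilkinson). Ergodicity then gives $\lambda^c>0$ pointwise a.e., so the sum of all positive Oseledets subspaces is $E^{cu}$; the Pesin unstable manifolds are thus tangent to $E^{cu}$ and, by dynamical coherence, coincide a.e.\ with the topological center unstable leaves. Pesin's absolute continuity theorem yields absolute continuity of the Pesin unstable holonomies, hence of the center unstable foliation. Since $\lambda^c>0$ and stable ergodicity are both $C^1$ open, this furnishes a nonempty $C^1$ open $\cW_1\subset\{\lambda^c>0\}$ with the required property.

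For the final sentence, if $f \in \cW_1$ is $C^k$, I would argue that $f$ is mostly expanding along the center direction in the sense of Alves--Bonatti--Viana; that property is $C^k$-robust even among dissipative perturbations, and the associated SRB measures continue to produce a Pesin unstable foliation that coincides with the topological center unstable foliation and is absolutely continuous. The center unstable foliation therefore remains absolutely continuous throughout a $C^k$ neighborhood of $f$, completing the statement.
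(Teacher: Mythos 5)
The decisive gap is in claim (3) and in the final robustness statement. The step ``Pesin's absolute continuity theorem yields absolute continuity of the Pesin unstable holonomies, hence of the center unstable foliation'' is not valid: Pesin theory gives absolute continuity of the non-uniform unstable lamination only restricted to Pesin blocks, where the local leaves have uniform size. Globally this yields at most \emph{upper leafwise} or local absolute continuity of $\cW^{cu}$ (compare Remark~\ref{r.variation2}), never the full holonomy property between arbitrary transversals, because for a fixed pair of transversals the set of points whose local Pesin leaf is long enough to realize the holonomy may leave out a positive-measure portion of the transversal; the sizes are not uniform and the exceptional set is only ambient-volume null, which says nothing about transversal measure. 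This is precisely the obstruction the paper emphasizes around Example~\ref{ex.simpleexample} (``Pesin theory alone can not give (global) absolute continuity''), and it is why Figure~\ref{f.scenario} leaves generic absolute continuity of $\cW^{cu}$ under $\lambda^c>0$ as an open question: if your argument worked it would prove that statement for every stably ergodic $f$ with $\lambda^c(f)>0$, which is far stronger than claim (3). The paper instead \emph{constructs} $\cW_1$ by a three-step $C^1$ perturbation --- (i) a Baraviera--Bonatti perturbation \cite{BB03} making $\lambda^c>0$ robustly while keeping $f=f_0$ near a periodic vertical leaf, (ii) a further perturbation making that leaf Morse--Smale and in general position, (iii) blenders making both strong foliations robustly minimal --- and then applies Corollary~\ref{c.generalposition} to $f^{-1}$, whose center direction is mostly contracting. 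The same mechanism, together with Andersson's $C^k$ openness of mostly contracting \cite{An10}, gives the final statement among dissipative diffeomorphisms; your appeal to robustness of ``mostly expanding'' in the sense of \cite{ABV00} is unsupported (no such openness result is known), and in any case it would again only produce Pesin-type, not global, absolute continuity.

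Claim (2) also has a gap, which you partly acknowledge. The Ruelle--Wilkinson atomicity statement \cite{RW01} concerns an \emph{ergodic} measure with nonzero center exponent; from $\lambda^c(f)>0$ you only control an integral, $f$ need not be ergodic, and the reduction from center stable leaves to the one-dimensional center quotient ``uniformly over a positive-measure set of leaves'' is exactly the content that would have to be supplied. The paper avoids disintegrations and ergodicity altogether via Proposition~\ref{p.finiteGamma}: fixing $0<c<\lambda^c(f)$, Birkhoff's theorem gives $\Leb(\Gamma_{c,1})>0$; Proposition~\ref{p.finiteGamma} shows $\Gamma_{c,1}$ meets every center leaf in at most $n_0$ points, and since $\Gamma_{c,1}$ is saturated by strong stable leaves it meets every center stable leaf in finitely many strong stable leaves; hence it has zero leaf volume for both $\cW^c$ and $\cW^{cs}$ while having positive ambient volume, which directly contradicts upper leafwise absolute continuity. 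Claim (1) in your proposal is fine and coincides with the paper's argument (openness of $\lambda^c\neq 0$ plus density from \cite{BB03}).
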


Theorems~\ref{t.main1} and~\ref{t.main2} follow from more detailed statements
that we present in the next section, where we also recall the main notions involved.
A discussion of the volume preserving case is given in Section~\ref{ss.conservative},
including the proof of Theorem~\ref{t.main3} and a (partly conjectural) scenario.

\section{Statement of results}\label{s.statementofresults}

Let $\cP_*^k(N)$ be the space of partially hyperbolic, dynamically coherent,
$C^k$ diffeomorphisms whose center leaves are compact, with any dimension,
and form a fiber bundle. Unless otherwise stated, we always assume $k>1$.
Most of our results concern the subspace $\cP_1^k(N)$ of diffeomorphisms with
$1$-dimensional center dimension. Let us begin by recalling the notions involved
in these definitions.

\subsection{Basic concepts}

A diffeomorphism $f:N \to N$ is \emph{partially hyperbolic} if there exists a continuous
$Df$-invariant splitting $TN=E^u \oplus E^c \oplus E^s$ and there exist constants $C>0$
and $\lambda <1$ such that
\begin{itemize}
\item[(a)] $\|Df_x^{-n}(v^u)\| \le C\lambda^n$ and
$\|Df_x^{n}(v^s)\| \le C\lambda^n$ \item[(b)] $\|Df_x^{-n}(v^u)\|
\le C\lambda^n \|Df_x^{-n}(v_c)\|$ and $\|Df_x^{n}(v^s)\| \le
C\lambda^n\|Df_x^{n}(v_c)\|$
\end{itemize}
for all unit vectors $v^u\in E_x^u$, $v^c\in E_x^c$, $v^s\in E_x^s$, and all $x\in N$ and
$n\ge 0$. Condition (a) means that the derivative $Df$ is uniformly expanding along $E^u$
and uniformly contracting along $E^s$.
Condition (b) means that the behavior of $Df$ along the \emph{center bundle} $E^c$ is
dominated by the behavior along the other two factors.
Here all three bundles are assumed to have positive dimension.

The bundles $E^u$ and $E^s$ are always integrable: there exist foliations $\cW^u$ and $\cW^s$
of $N$ tangent to $E^u$ and $E^s$, respectively, at every point. In fact these foliations
are unique. Moreover, they are \emph{absolutely continuous}, meaning that the projections
along the leaves between any two cross-sections preserve the class of sets with zero volume
inside the cross-section. See \cite{BP74,HPS77,Sh87}.
A diffeomorphism $f:N \to N$ is \emph{dynamically coherent} if the bundles $E^{cu}=E^c\oplus E^u$
and $E^{cs}=E^c\oplus E^s$ also admit integral foliations, $\cW^{cu}$ and $\cW^{cs}$.
Then, intersecting their leaves one obtains a \emph{center foliation} $\cW^c$ tangent at
every point to the center bundle $E^c$.

We say that the center leaves \emph{form a fiber bundle} over the leaf space $\Nc$ if for
any $\cW^c(x)\in N/\cW^c$ there is a neighborhood $V\subset N/\cW^c$ of $\cW^c(x)$ and a
homeomorphism
$$
h_x: V \times \cW^c(x) \to \pi_c^{-1}(V)
$$
smooth along the verticals $\{\ell\}\times\cW^c(x)$ and mapping each vertical onto the
corresponding center leaf $\ell$.

\begin{remark}
The fiber bundle condition is probably not necessary. Indeed, when the diffeomorphisms
are volume preserving, Avila, Viana, Wilkinson~\cite{AVW1} prove that if $\dim E^c=1$
and the generic center leaves are circles then the center leaves form a fiber bundle
\emph{up to a finite cover}. In particular, all leaves are circles.
Our arguments extend easily to this situation.
\end{remark}

A partially hyperbolic diffeomorphism $f:N\to N$ is \emph{accessible} if any points
$z$, $w\in N$ can be joined by a piecewise smooth curve $\gamma$ such that every
smooth leg of $\gamma$ is tangent to either $E^u$ or $E^s$ at every point.
Equivalently, every smooth leg of the curve $\gamma$ is contained in a leaf of
either $\cW^u$ or $\cW^s$.

The \emph{center Lyapunov exponent} $\lambda^c(\mu)$ of an $f$-invariant
probability measure $\mu$ is defined by
\begin{equation}\label{eq.Lyapunovexponent1}
\lambda^c(\mu) = \int \lambda^c(z) \, d\mu(z)
\quad \text{where $\lambda^c(z) = \lim_{n\to\infty} \frac{1}{n}
\log |Df^n\mid E^c_z|$.}
\end{equation}
By the ergodic theorem, this may be rewritten
\begin{equation}\label{eq.Lyapunovexponent2}
\lambda^c(\mu)
 = \int \log |Df \mid E^c_z| \, d\mu(z).
\end{equation}
If $\mu$ is ergodic then $\lambda^c(\mu)=\lambda^c(z)$ for $\mu$-almost every $z$.

Finally, the center direction is \emph{mostly contracting} (Bonatti, Viana~\cite{BoV00}) if
\begin{equation}\label{eq.mostlycontracting}
\limsup_{n\to+\infty} \frac 1n \log \|Df^n \mid E^c_x\| < 0.
\end{equation}
for a positive volume measure subset of any disk inside a strong unstable leaf.
It was shown by Andersson~\cite{An10} that this is a $C^k$, $k>1$ open property.

\subsection{The leaf space}\label{ss.leafspace}

Let $d$ be the Riemannian distance on $N$. We endow the leaf space $\Nc$ with the distance
defined by
$$
\dc(\xi,\eta)=\sup_{x\in\xi}\inf_{y\in\eta}d(x,y)+\sup_{y\in\eta}\inf_{x\in\xi}d(x,y)
\quad\text{for each } \xi, \eta \in \Nc.
$$
The quotient map $\pi_c:(N,d)\to(\Nc,\dc)$ is continuous and onto.
In particular, the metric space $(\Nc,\dc)$ is compact.

Let $\fc:\Nc\to\Nc$ be the map induced by $f$ on the quotient space $\Nc$.
The stable set of a point $\xi\in\Nc$ for $\fc$ is defined by
$$
W^s(\xi)=\{\eta\in\Nc : \dc(\fc^n(\xi),\fc^n(\eta))\to 0\text{ when }n\to +\infty\}
$$
and the local stable set of size $\vep>0$ is defined by
$$
W^s_\vep(\xi)=\{\eta\in\Nc : \dc(\fc^n(\xi),\fc^n(\eta))\le\vep
\text{ for all } n\ge 0\}.
$$
The unstable set and local unstable set of size $\vep>0$ are defined in the same way,
for backward iterates. It follows from the definitions that there exist constants
$K$, $\tau$, $\vep$, $\delta>0$ such that
\begin{enumerate}
\item $\dc(\fc^n(\eta_1),\fc^n(\eta_2))\le K e^{-\tau n}\dc(\eta_1,\eta_2)$ for
all $\eta_1, \eta_2 \in W^s_\vep(\xi)$, $n\ge 0$;
\item $\dc(\fc^{-n}(\zeta_1),\fc^{-n}(\zeta_2))\le Ke^{-\tau n}\dc(\zeta_1,\zeta_2)$ for
all $\zeta_1, \zeta_2 \in W^u_\vep(\xi)$, $n\ge 0$;
\item if $\dc(\xi_1,\xi_2)\le\delta$ then $W^s_\vep(\xi_1)$ and $W^u_\vep(\xi_2)$
intersect at exactly one point, denoted $[\xi_1\,\xi_2]$ and this point depends
continuously on $(\xi_1,\xi_2)$.
\end{enumerate}
This means that $\fc$ is a hyperbolic homeomorphism (in the sense of Viana~\cite{Almost}).
We denote $\cW^c(\Lambda)=\pi_c^{-1}(\Lambda)$, for any subset $\Lambda$ of $\Nc$.

By Anosov's closing lemma~\cite{An67}, periodic points are dense in the non-wandering set
of $\fc$. Smale's spectral decomposition theorem~\cite{Sm67},  the non-wandering set
splits into a finite number of compact, invariant, transitive, pairwise disjoint subsets.
Among these basic pieces of the non-wandering set, the \emph{attractors} $\Lambda_i$,
$i=1, \dots, k$ of $\fc$ are characterized by the fact that
$$
\Lambda_i = \bigcap_{n=0}^\infty \fc^n(U_i)
$$
for some neighborhood $U_i$ of $\Lambda_i$ and it is transitive.
The union of the stable sets $W^s(\Lambda_i)$, $i=1, \dots, k$ is an open dense subset of $\Nc$.
Every attractor $\Lambda_i$ consists of entire unstable sets, and so $\cW^c(\Lambda_i)$ is
$\cW^u$-saturated, that is, it consists of entire strong unstable leaves of $f$.
Additionally, every $\Lambda_i$ has finitely many connected components $\Lambda_{i,j}$,
$j=1, \dots, n_i$ that are mapped to one another cyclically. The unstable set $W^u(x)$ of
every $x\in \Lambda_{i,j}$ is contained and dense in $\Lambda_{i,j}$. In particular,
$\cW^c(\Lambda_{i,j})$ is also $\cW^u$-saturated. If $f_c$ is transitive, there is a unique
attractor $\Lambda_1=\Nc$.


We say $f$ is \emph{accessible on $\Lambda_i$} if, for every $j$, any points $z$, $w\in \cW^c(\Lambda_{i,j})$
can be joined by a piecewise smooth curve $\gamma$ such that every smooth leg of $\gamma$ is
tangent to either $E^u$ or $E^s$ at every point and the corner points belong to the same $\cW^c(\Lambda_{i,j})$.
The center direction of $f \mid {\cW^c(\Lambda_i)}$ is \emph{mostly contracting} if
\eqref{eq.mostlycontracting} holds for a positive volume measure subset of any disk inside a strong
unstable leaf contained in $\cW^c(\Lambda_i)$.

\subsection{Physical measures}\label{ss.finitenessandstability}

We are ready to state our main result on existence and finiteness of physical measures:

\begin{main}\label{t.mainB}
If $f\in\cP_1^k(N)$, $k > 1$ is accessible on every attractor and the center stable foliation
is absolutely continuous then, for each attractor $\Lambda_i$, either
\begin{itemize}
\item[(a)] there is a Lipschitz metric on each leaf of $\cW^c(\Lambda_i)$, depending continuously
on the leaf and invariant under $f$; then $f$ admits a unique physical measure, which is ergodic,
whose basin has full volume in the stable set of $\cW^c(\Lambda_i)$,
and whose center Lyapunov exponent vanishes;

\item[(b)] or the center direction of $f\mid {\cW^c(\Lambda_i)}$ is mostly contracting;
then $f\mid {\cW^c(\Lambda_i)}$ has finitely many physical measures, they are ergodic
for $f$ and Bernoulli for some iterate, the union of their basins is a full volume subset
of the stable set of $\cW^c(\Lambda_i)$, and their center Lyapunov exponents are negative.
\end{itemize}
The union of the basins of these physical measures has full volume in $N$.
\end{main}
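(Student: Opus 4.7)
The plan is to analyze each attractor $\Lambda_i$ of $f_c$ separately, and on each $\cW^c(\Lambda_i)$ to establish the dichotomy by classifying the signs of the center Lyapunov exponents of ergodic Gibbs $u$-states supported there. By Pesin--Sinai, forward averages of Lebesgue measure on any disk inside a strong unstable leaf accumulate on Gibbs $u$-states, i.e., invariant measures whose disintegrations along $\cW^u$ are absolutely continuous with continuous densities. Because every $\cW^c(\Lambda_i)$ is $\cW^u$-saturated and $\bigcup_i W^s(\Lambda_i)$ is open and dense in $\Nc$, each ergodic Gibbs $u$-state is supported in some $\cW^c(\Lambda_i)$. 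Fixing $i$, I would split according to whether all such ergodic Gibbs $u$-states on $\cW^c(\Lambda_i)$ have strictly negative center exponent (leading to case (b)) or some has non-negative center exponent (leading to case (a)).

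In case (b), by Andersson's characterization \cite{An10} the negativity of all center exponents of Gibbs $u$-states on $\cW^c(\Lambda_i)$ is equivalent to the mostly contracting condition on $\cW^c(\Lambda_i)$. The Bonatti--Viana framework then yields finitely many physical measures, each ergodic for $f$ and Bernoulli for some iterate, with negative center exponents, whose basins cover a full Lebesgue subset of $\cW^c(\Lambda_i)$. To extend basin coverage from $\cW^c(\Lambda_i)$ to the full stable set of $\cW^c(\Lambda_i)$ I would combine two ingredients: uniform contraction along $\cW^s$, which sends every orbit in the stable set into arbitrarily small neighborhoods of $\Lambda_i$ in $\Nc$, and absolute continuity of $\cW^{cs}$, which transports full-Lebesgue subsets of $\cW^c(\Lambda_i)$ to full-Lebesgue subsets of a neighborhood via holonomy. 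Accessibility on $\Lambda_i$ enters here to guarantee that distinct physical measures have disjoint basins that are recovered from the ergodic decomposition of the Gibbs $u$-states.

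Case (a) is the main obstacle. Suppose some ergodic Gibbs $u$-state $\mu$ on $\cW^c(\Lambda_i)$ has $\lambda^c(\mu)\ge 0$. My first task is to show $\lambda^c(\mu) = 0$: a strictly positive center exponent would, by a Ledrappier--Young type argument together with absolute continuity of $\cW^{cs}$ and the 1-dimensionality of $E^c$, force $\mu$ to be an SRB measure, while on compact 1-dimensional center leaves this can only happen if $\mu$ concentrates on repelling periodic center orbits, contradicting the unstable absolute continuity of $\mu$ combined with accessibility on $\Lambda_i$. The harder step is then to produce a continuous positive density $\rho$ on the center leaves satisfying the cocycle equation $\rho(x)=\rho(f(x))\,|Df\mid E^c_x|$, whose weighted arc length is the desired $f$-invariant Lipschitz metric. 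I would obtain $\rho$ from a leafwise Krylov--Bogolyubov averaging applied to $\log|Df\mid E^c|$: vanishing of the center exponent together with H\"older equicontinuity of the cocycle across center leaves (inherited from hyperbolicity of $f_c$) bound the Birkhoff sums and yield a limit density along each leaf. Transversal regularity and Lipschitz dependence on the leaf follow from a Livsic-type argument using absolute continuity of $\cW^{cs}$, and uniqueness of $\rho$ up to a global scalar on each $\cW^c(\Lambda_i)$ follows from accessibility.

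Given $\rho$, the unique physical measure on $\cW^c(\Lambda_i)$ is the product of $\rho$-weighted arc length along center leaves with the SRB measure of $f_c$ on $\Lambda_i$; its basin covers a full Lebesgue subset of the stable set of $\cW^c(\Lambda_i)$ by the same $\cW^{cs}$-holonomy argument as in case (b). Summing basin coverage over the finitely many attractors and using that $\bigcup_i W^s(\Lambda_i)$ is full-measure in $\Nc$ for the corresponding SRB, a last application of absolute continuity of $\cW^{cs}$ promotes this to full Lebesgue measure in $N$. The constructions in case (a) and the invariance-principle step are where I expect the real technical work to lie; everything else is a fairly standard assemblage of the Gibbs $u$-state and Bonatti--Viana machinery adapted to the quotient dynamics $f_c$.
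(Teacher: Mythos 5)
Your overall architecture (classify the signs of the center exponents of ergodic Gibbs $u$-states attractor by attractor, invoke Andersson and Bonatti--Viana in the negative case, and finish the full-volume statement with density points and absolute continuity of the invariant foliations) is the same as the paper's, and case (b) as you sketch it is fine. But both of the steps you defer as ``the real technical work'' in case (a) are, as sketched, gaps rather than proofs, and the mechanisms you propose would not work.

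First, ruling out a strictly positive center exponent. Your claim that positivity of $\lambda^c(\mu)$ plus Ledrappier--Young and absolute continuity of $\cW^{cs}$ forces $\mu$ to be SRB and hence to ``concentrate on repelling periodic center orbits'' has no justification and is false as a general mechanism: ergodic Gibbs $u$-states with positive center exponent exist in abundance (this is exactly the mostly expanding situation of \cite{ABV00}), and they certainly do not concentrate on periodic center orbits. The hypotheses must enter through a concrete argument, and in the paper they do so as follows (Proposition~\ref{p.somenegative}): a Pliss/hyperbolic-time volume estimate shows that the set of points with definite backward center contraction meets every (compact, one-dimensional) center leaf in at most $n_0$ points (Proposition~\ref{p.finiteGamma}, in the spirit of Ruelle--Wilkinson); then, starting from a $\cW^u$-disk full of such regular points and using absolute continuity of $\cW^{cs}$ together with accessibility on the attractor, one shows that \emph{every} point of a periodic center leaf is periodic with uniformly bounded period (Lemmas~\ref{l.claim1} and~\ref{l.claim2}); this contradicts Katok's theorem that the support of a hyperbolic measure contains hyperbolic periodic points. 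Nothing in your sketch replaces this chain.

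Second, the zero-exponent case. Vanishing of $\lambda^c(\mu)$ does not bound the Birkhoff sums of $\log|Df\mid E^c|$ (they may grow sublinearly but unboundedly), so the leafwise Krylov--Bogolyubov averaging you propose need not converge to a finite positive density, and a Livsic-type argument does not apply to this fibered cocycle of circle diffeomorphisms without precisely the ingredient you are trying to construct. The paper's route is different in substance: $f$ is viewed as a smooth cocycle over the hyperbolic homeomorphism $\fc$; absolute continuity of $\cW^{cs}$ is used to prove that $(\pi_c)_*m$ has local product structure (Proposition~\ref{p.ustates_product}); the Avila--Viana invariance principle (Theorem~D of \cite{AV3}) then yields a \emph{continuous, bi-invariant} disintegration of $m$ along center leaves (Proposition~\ref{p.continuousdisintegration}); finally, accessibility on $\Lambda_i$ together with the bounded Jacobians of strong stable/unstable holonomies restricted to center-(un)stable leaves shows the conditionals are equivalent to $\Leb^c$ with densities in $[K^{-1},K]$, which is what produces the invariant Lipschitz metric, the uniqueness of the Gibbs $u$-state on $\cW^c(\Lambda_i)$, and the basin statement (Proposition~\ref{p.elliptic}). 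Relatedly, your description of the physical measure as the product of $\rho$-weighted arc length with ``the SRB measure of $\fc$'' is not well posed: $\fc$ is only a hyperbolic homeomorphism of the quotient and carries no intrinsic SRB measure; the transverse measure is the projection of the Gibbs $u$-state itself (and coincidence of such projections is a separate point, Proposition~\ref{p.finitenessinleafspace}). Until these two steps are carried out by some genuine argument, the proof of the dichotomy is incomplete.
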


To see that  Theorem~\ref{t.main1} is contained in Theorem~\ref{t.mainB} let us to note that,
for every $k\ge 1$, any $C^k$ partially hyperbolic skew-product $f_0$ is in the interior of
$\cP_1^k(N)$. Indeed, partial hyperbolicity is well known to be a $C^1$ open property and
the stability theorem for normally hyperbolic foliations (Hirsch, Pugh,  Shub~\cite{HPS77})
gives that every $f$ in a $C^1$ neighborhood of $f_0$ admits an invariant $\cW^{*}_f$
foliation, for each $*\in\{cu, cs, c\}$, and there exists a homeomorphism mapping the leaves
of $\cW^{*}_f$ diffeomorphically to the leaves of $\cW^{*}_{f_0}$. In particular, the center
leaves of $f$ form a circle fiber bundle.

\begin{remark}\label{r.continuoussection}
When the center fiber bundle is trivial, as happens near skew-products, part (a) of the
Theorem~\ref{t.mainB} gives that $f \mid {\cW^c(\Lambda_i)}$ is topologically conjugate
to a rotation extension
$$
\Lambda_i \times \RR/\ZZ \to \Lambda_i \times \RR/\ZZ,
\quad (x,\theta) \mapsto (\fc(x), \theta+\omega(x)).
$$
To see this, fix some consistent orientation of the center leaves and any continuous
section  $\sigma:N/\cW^c \to N$ of the center foliation, that is, any continuous map
such that $\sigma(\ell)\in\ell$ for every $\ell\in N/\cW^c$. Then define
$$
h: \cW^c(\Lambda_i) \to \Lambda_i \times \RR/\ZZ,
\quad h(z)=(\pi_c(z), |\sigma(\pi_c(z)),z|)
$$
where $|\sigma(\pi_c(z)),z|$ denotes the length, with respect to the $f$-invariant Lipschitz
metric, of the (oriented) curve segment from $\sigma(\pi_c(z))$ to $z$ inside the center leaf.
This map sends the center leaves of
$f$ to verticals $\{w\}\times \RR/\ZZ$, mapping the $f$-invariant Lipschitz metric on the center leaves
to the standard metric on $\RR/\ZZ$. Then $h \circ f \circ h^{-1}$ preserves the
standard metric measure on the verticals, and so it is a rotation extension, as stated.
Observe that, in addition, both $h$ and its inverse are Lipschitz on every leaf.
\end{remark}

Explicit bounds on the number of physical measures can be given in many cases.
For instance, we will see in Theorem~\ref{t.mainC} that if $f$ admits some periodic center leaf
$\ell$ restricted to which $f$ is Morse-Smale then the number of physical measures over the
attractor  containing $\pi_c(\ell)$ is bounded by the number of periodic orbits on $\ell$.
Notice that we must have alternative (b) of Theorem~\ref{t.mainB} in this case, since alternative (a)
is incompatible with the existence of hyperbolic periodic points.

We also want to analyze the dependence of the physical measures on the dynamics.
For this, we assume $N=M\times S^1$ and restrict ourselves to the subset $\cS^k(N)\subset\cP^k_1(N)$
of skew-product maps. We prove in Theorem~\ref{t.mainD} that there is an open and dense subset
of diffeomorphisms $f\in\cS^k(N)$ with mostly contracting center direction, such that the number of
physical measures is locally constant and the physical measures vary continuously with the diffeomorphism.
This property of \emph{statistical stability} has been studied in a number
of recent works, including Alves, Viana~\cite{AlV02}, V\'asquez~\cite{Va07},
Andersson~\cite{An10}.

As mentioned before, existence and finiteness of physical measures for partially
hyperbolic diffeomorphisms was proved by Alves, Bonatti, Viana~\cite{ABV00,BoV00},
under certain assumptions of weak hyperbolicity along the center direction.
Substantial improvements followed, by Alves, Luzzatto, Pinheiro~\cite{ALP03,ALP05},
Alves, Araujo~\cite{AA04}, Vasquez~\cite{Va07}, Pinheiro~\cite{Pi06}, and Andersson~\cite{An10},
among others. Perturbations of certain skew-products over hyperbolic maps have been studied
by Alves~\cite{Al00,Al03}, Buzzi, Sestier, Tsujii~\cite{BST}, and Gouezel~\cite{Gou06}.
In a remarkable recent paper, Tsujii~\cite{Tsu05} proved that generic (dense $G_\delta$)
partially hyperbolic surface \emph{endomorphisms} do admit finitely many physical measures,
such that the union of their basins has full Lebesgue measure. His approach is very different
from the one in the present paper and it is not clear how it could be extended to
diffeomorphisms in higher dimensions, even in the case of one-dimensional center bundle.

\subsection{Absolute continuity}\label{ss.absolutecontinuity}

It has been pointed out by Shub, Wilkinson~\cite{SW00} that foliations
tangent to the center subbundle $E^c$ are often \emph{not} absolutely
continuous. In fact, Ruelle, Wilkinson~\cite{RW01} showed that the disintegration
of Lebesgue measure along the leaves is often atomic.
Moreover, Avila, Viana, Wilkinson~\cite{AVW1} observed recently that for certain classes
of volume preserving diffeomorphisms, including perturbations of skew-products \eqref{eq.skewproduct},
absolute continuity of the center foliation is a rigid property: it implies that the center foliation
is actually smooth, and the map is smoothly conjugate to a rigid model.

However, we prove that this is not at all the case in our dissipative
setting:

\begin{main}\label{t.mainG}
There is an open set $\cU\subset \cP_1^k(N)$, $k>1$, such that the center
stable, the center unstable, and the center foliation are absolutely continuous
for every $f\in \cU$. Moreover, $\cU$ may be chosen to accumulate on every
skew-product map $f_0$ that admits a periodic vertical fiber restricted to
which the map is Morse-Smale with a unique periodic attractor and repeller.
\end{main}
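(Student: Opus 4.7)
The plan is to take $\cU$ to be a $C^k$-open set of accessible perturbations of skew-products $f_0$ satisfying the Morse-Smale hypothesis on a periodic vertical fiber, and to verify the three absolute continuity claims by combining the classical theory of hyperbolic stable and unstable manifolds at two persisting hyperbolic periodic points with a dynamical saturation argument.

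First I set up the perturbation scheme. Given $f_0$ with a periodic vertical fiber $\ell$ on which $f_0^{\per(\ell)}\mid\ell$ is Morse-Smale with a unique attractor $a$ and a unique repeller $r$, the Hirsch-Pugh-Shub normal hyperbolicity theorem gives, for every $f$ in a $C^1$-neighborhood of $f_0$, invariant foliations $\cW^c,\cW^{cs},\cW^{cu}$ close to those of $f_0$. The periodic leaf $\ell$ persists as a periodic center leaf $\ell_f$, and $f^{\per(\ell_f)}\mid\ell_f$ remains Morse-Smale with a unique attracting point $a_f$ and a unique repelling point $r_f$. Because $\dim E^c = 1$, the point $a_f$ is a hyperbolic periodic point of $f$ with stable manifold tangent to $E^c\oplus E^s$, while $r_f$ is hyperbolic with unstable manifold tangent to $E^c\oplus E^u$. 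The Ni\c{t}ic\v{a}-T\"or\"ok density result then lets me pick $\cU$ to be a nonempty $C^k$-open subset of $\cP_1^k(N)$ of accessible perturbations accumulating on $f_0$.

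Next I prove absolute continuity of $\cW^{cs}$. On a small neighborhood $V$ of the orbit of $a_f$ the center direction is uniformly contracted by $f^{\per(\ell_f)}$, so the partial-hyperbolic splitting refines to a genuinely hyperbolic splitting whose full stable lamination agrees locally with $\cW^{cs}$. The classical stable manifold theory (Hirsch-Pugh-Shub, with Pesin regularity for $k>1$) then yields that $\cW^{cs}$-holonomy between smooth transversals inside $V$ is absolutely continuous. The Morse-Smale hypothesis, combined with accessibility, ensures that $f \mid \cW^c(\Lambda)$ is mostly contracting for the attractor $\Lambda\subset\Nc$ containing $\pi_c(\ell_f)$, so that a full-volume subset of $N$ lies in the backward $f$-saturation of $V$. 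Pulling back the local absolutely continuous holonomy by $f^{-n}$, with uniform distortion control available because $k>1$, extends absolute continuity to a full-volume collection of transversal pairs, which is the content of absolute continuity of the foliation. Applying the symmetric argument to $f^{-1}$ at $r_f$ yields absolute continuity of $\cW^{cu}$.

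Absolute continuity of $\cW^c$ then follows: since $\dim E^c = 1$, each $\cW^c$-leaf is the transverse intersection of a $\cW^{cs}$-leaf with a $\cW^{cu}$-leaf, and so $\cW^c$-holonomy between two unstable disks factors as $\cW^{cs}$-holonomy followed, inside a $\cW^{cs}$-leaf, by the leafwise holonomy along the $\cW^s$-subfoliation, which is absolutely continuous by the classical theory of strong stable foliations; the composition is absolutely continuous. The hard part of the argument is the globalization step for $\cW^{cs}$ and $\cW^{cu}$: upgrading the purely local absolute continuity at $a_f$ and $r_f$ to a full-volume statement requires uniform distortion bounds on holonomies under arbitrarily long backward iteration, and this is where $C^k$ regularity with $k>1$ and the mostly-contracting structure coming from the Morse-Smale hypothesis become indispensable, via Pliss-type selection of good times and summable distortion estimates.
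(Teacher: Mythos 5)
Your globalization step for $\cW^{cs}$ is where the argument breaks down. What you actually establish is local absolute continuity of the holonomy near the orbit of the attracting point $a_f$ (where the center-stable plaques coincide with genuine stable manifolds), and you then claim that pulling this back by $f^{-n}$ over a full-volume set of points "is the content of absolute continuity of the foliation." It is not: absolute continuity requires controlling the holonomy between \emph{arbitrary} pairs of transversals, and the backward-iteration argument only controls holonomies whose images under forward iteration land inside the region where the local product structure of Pesin stable manifolds is available. The trouble is the gap along the center direction at the repelling point $r_f$: two nearby $u$-disks may be iterated forward so that one meets the set of uniform Pesin stable manifolds while the relevant piece of the other sits in the center gap over $r_f$, and no distortion estimate bridges that gap. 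The paper's own Example~\ref{ex.simpleexample} is exactly a counterexample to your scheme: there the center foliation is locally absolutely continuous on the basin of the attracting circle (leaves coincide with Pesin stable manifolds) and leafwise absolutely continuous, yet the global holonomy fails to be absolutely continuous because of the mismatch of derivatives at the two fixed boundary circles. So "Morse-Smale on a periodic fiber plus accessibility plus $k>1$" cannot suffice; indeed your argument would give absolute continuity on a full neighborhood of every such $f_0$, which is strictly stronger than the theorem (which only claims $\cU$ \emph{accumulates} on $f_0$) and is not what the method can deliver.

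The missing idea is the \emph{general position} (twisting) condition of Proposition~\ref{p.generalposition}: after a further $C^k$-small perturbation inside the class of skew-products one arranges that, for some homoclinic center leaf $\ell'$, the stable-holonomy images $h^s(a\cup r)$ are disjoint from the unstable-holonomy images $h^u(a\cup r)$. This is what allows the construction of two $cs$-blocks $\cB_1,\cB_2$ whose gaps along the center circle are disjoint, so that their union "covers the whole center direction"; the recurrence result (Proposition~\ref{p.csboxsection}) together with Lemma~\ref{l.lem2} then shows that large iterates of any positive-measure subset of any $u$-disk meet \emph{both} blocks with positive measure, and only then can one transport positive measure across a $cs$-holonomy between arbitrary $u$-disks, using the uniform absolute continuity inside the blocks. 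The same criterion applied to $f^{-1}$ gives $\cW^{cu}$, and $\cW^c$ then follows from the intersection lemma (Lemma~\ref{l.csufoliations}); your reduction of $\cW^c$ to $\cW^{cs}$ and $\cW^{cu}$ is fine in spirit, but without the twisting condition and the two-block covering argument the central claims about $\cW^{cs}$ and $\cW^{cu}$ remain unproved.
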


Two weaker forms of absolute continuity are considered by Avila, Viana,
Wilkinson~\cite{AVW1}. Let $\Leb$ denote Lebesgue measure in the ambient manifold
and $\Leb_L$ be Lebesgue measure restricted to some submanifold $L$.
A foliation $\cF$ on $N$ is (\emph{lower}) \emph{leafwise absolutely continuous}
if for every zero $\Leb$-measure set $Y\subset N$ and $\Leb$-almost every $z\in M$,
the leaf $L$ through $z$ meets $Y$ in a zero $\Leb_L$-measure set.
Similarly, $\cF$ is \emph{upper leafwise absolutely continuous}
if $\Leb_L(Y)=0$ for every leaf $L$ through a full measure subset of points
$z\in M$ implies $\Leb(Y)=0$.
Absolute continuity implies both lower and upper leafwise absolute continuity
(see \cite{AVW1,BS02}); the converse is not true in general.
We will see in Proposition~\ref{p.mainF} that the center stable foliation of
a partially hyperbolic, dynamically coherent diffeomorphism with mostly
contracting center direction is always upper leafwise absolutely continuous.
This does not extend to lower leafwise absolutely continuity, in general:
robust counter-examples will appear in~\cite{VY2}; see also Example~\ref{ex.simpleexample}
for a related construction. However, as stated before, full absolute continuity
of the center foliation does hold on some open subsets of diffeomorphisms with
mostly contracting center.

\section{Gibbs $u$-states}\label{s.Gibbsustates}

Let $f:N\to N$ be a partially hyperbolic diffeomorphism. In what follows we denote
$I_r=[-r,r]$ for $r>0$ and $d_*=\dim E^{*}$ for each $*\in\{u, cu, c, cs, s\}$.
We use $\Leb^*$ to represent the volume measure induced by the restriction of the
Riemannian structure on the leaves of the foliation $\cW^*$ for each
$*\in\{u, cu, c, cs, s\}$.

Following Pesin, Sinai~\cite{PS82} and Alves, Bonatti, Viana~\cite{ABV00,BoV00}
(see also \cite[Chapter~11]{Beyond}), we call \emph{Gibbs $u$-state} any invariant
probability measure $m$ whose conditional probabilities (Rokhlin~\cite{Ro52})
along strong unstable leaves are absolutely continuous with respect to the volume
measure $\Leb^u$ on the leaf. More precisely, let
$$
\Phi:I_1^{d_u} \times I_1^{d_{cs}} \to N
$$
be any \emph{foliated box} for the strong unstable foliation.
By this we mean that $\Phi$ is a homeomorphism and maps every horizontal plaque
$I_1^{d_u}\times\{\eta\}$ diffeomorphically to a disk inside some strong unstable leaf.
Pulling $m$ back under $\Phi$ one obtains a measure $m_\Phi$ on $I_1^{d_u} \times I_1^{d_{cs}}$.
The definition of Gibbs $u$-state means that there exists a measurable function
$\alpha_\Phi(\cdot\,,\cdot)\ge 0$ and a measure $m_\Phi^{cs}$ on $I_1^{d_{cs}}$ such that
\begin{equation}\label{eq.Gibbsu}
m_\Phi(A) = \int_A \alpha_\Phi(\xi,\zeta) \,d\xi\,dm_\Phi^{cs}(\zeta)
\end{equation}
for every measurable set $A \subset I_1^{d_u}\times I_1^{d_{cs}}$.

Proofs for the following basic properties of Gibbs $u$-states can be found in
Section~11.2 of Bonatti, D\'\i az, Viana~\cite{Beyond}:

\begin{proposition}\label{p.Gibbsustates}
Let $f:N\to N$ be a partially hyperbolic diffeomorphism.
\begin{enumerate}

\item The densities of a Gibbs $u$-state with respect to Lebesgue measure along
strong unstable plaques are positive and bounded from zero and infinity.

\item The support of every Gibbs $u$-state is $\cW^u$-saturated, that is, it
consists of entire strong unstable leaves.

\item The set of Gibbs $u$-states is non-empty, weak$^*$ compact, and convex.
Ergodic components of Gibbs $u$-states are Gibbs $u$-states.

\item Every physical measure of $f$ is a Gibbs $u$-state and, conversely, every ergodic $u$-state
whose center Lyapunov exponents are negative is a physical measure.
\end{enumerate}
\end{proposition}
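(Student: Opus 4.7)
My plan is to deduce all four items from one classical ingredient, namely bounded distortion along strong unstable leaves, and to treat them in the stated order.

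For item~(1), fix a Gibbs $u$-state $m$, a foliated box $\Phi$, and an unstable plaque $P\subset\Phi$. By $f$-invariance the conditional of $m$ on $P$ is the $f^n$-pushforward of the conditional on the pre-plaque $f^{-n}(P)$, whose diameter is at most $C\lambda^n$. Since $Df\mid E^u$ is uniformly expanding and $C^\alpha$-H\"older, the classical distortion estimate
$$
\sup_n\;\sup_{x,y\in f^{-n}(P)} \Big|\log\frac{\det Df^n\mid E^u(x)}{\det Df^n\mid E^u(y)}\Big| \le K < \infty
$$
holds. Any probability on $f^{-n}(P)$ therefore pushes forward to a measure on $P$ whose density against $\Leb^u|_P$ has oscillation at most $e^K$; combined with the normalization of the conditional this yields bounds $c_1\le \alpha_\Phi\le c_2$, proving (1). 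Item~(2) is then immediate: at any $z\in\supp m$ the unstable conditional is positive on every open subplaque through $z$, so the whole plaque lies in $\supp m$, and $f$-invariance of $\supp m$ together with forward expansion along $E^u$ spreads this to the full leaf $\cW^u(z)$.

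For item~(3) I would construct Gibbs $u$-states by taking weak$^*$ accumulation points of the Ces\`aro averages
$$
m_n = \frac{1}{n}\sum_{j=0}^{n-1} f^j_*\Big(\frac{\Leb^u|_D}{\Leb^u(D)}\Big)
$$
of normalized Lebesgue on an embedded unstable disk $D$. Each iterate $f^j_*(\Leb^u|_D/\Leb^u(D))$ is absolutely continuous along unstable plaques with density bounded by a universal constant $C_0$ coming from the same distortion estimate; this survives Ces\`aro averaging and weak$^*$ limits once one recasts the Gibbs property as an integral inequality $\int\varphi\,dm \le C_0 \int\varphi\,d\widetilde\Leb_\Phi$ valid for all non-negative continuous $\varphi$ supported in $\Phi$, where $\widetilde\Leb_\Phi$ is $\Leb^u$ on the plaques integrated against an arbitrary transverse probability. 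This delivers both non-emptiness and weak$^*$ compactness; convexity is evident. For ergodic components, uniqueness of Rokhlin disintegrations together with Fubini implies that the unstable conditionals of almost every component are absolutely continuous with density still bounded by $C_0$, so ergodic components of Gibbs $u$-states are again Gibbs $u$-states.

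Item~(4) goes in two directions. If $\mu$ is a physical measure, absolute continuity of $\cW^u$ transfers the positive $\Leb$-measure of $B(\mu)$ to a positive $\Leb^u$-measure subset $E\subset D$ of some unstable disk; empirical averages along the starting points of $E$ converge to $\mu$, so the Ces\`aro averages of $f^j_*(\Leb^u|_E/\Leb^u(E))$ converge weakly to $\mu$, and the argument of (3) shows $\mu$ is a Gibbs $u$-state. Conversely, if $m$ is an ergodic Gibbs $u$-state with negative center Lyapunov exponents, all Lyapunov exponents of $m$ along $E^{cs}=E^c\oplus E^s$ are negative, so Pesin's theorem produces a Pesin block $\Lambda_\delta$ with $m(\Lambda_\delta)\ge 1-\delta$ on which local stable manifolds tangent to $E^{cs}$ have uniformly positive size and absolutely continuous holonomy; item~(1) gives a positive $\Leb^u$-measure subset of some unstable plaque consisting of $m$-generic points in $\Lambda_\delta$, and saturating by those stable manifolds yields a positive-volume set of $m$-generic points in $N$, so $m$ is physical. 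The main obstacle throughout is the weak$^*$ compactness step in~(3): Rokhlin disintegrations are not continuous under weak$^*$ limits, and one must genuinely use the integral reformulation above to pass the bounded-density property to the limit; after that, the remaining claims are routine combinations of bounded distortion, absolute continuity of $\cW^u$, and Pesin theory.
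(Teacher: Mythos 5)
The paper itself does not prove this proposition: it only points to Section~11.2 of Bonatti--D\'iaz--Viana \cite{Beyond}, and your proposal reconstructs essentially that standard route (bounded distortion along unstables, Ces\`aro averages of Lebesgue on an unstable disk for existence and compactness, Pesin theory plus absolute continuity of the strong laminations for item (4)). Items (2)--(4) are correct in outline. The two delicate points there are ones you either flagged or that are routine to repair: for (3), the weak$^*$-closed reformulation must be of the form ``$m\mid\Phi\le C_0(\Phi)\,\Leb^u\otimes\hat\nu$ for \emph{some} transverse probability $\hat\nu$'' (the transverse factor has to be allowed to depend on $m$, e.g.\ its own quotient measure), and only the upper density bound needs to pass to the limit, positivity being recovered afterwards from invariance via item (1); for the forward direction of (4), since $E=B(\mu)\cap D$ is a positive-measure subset and not a disk, you should dominate $\Leb^u\mid E/\Leb^u(E)$ by $\big(\Leb^u(D)/\Leb^u(E)\big)\,\Leb^u\mid D/\Leb^u(D)$, so that the limit $\mu$ is dominated by a constant times a Gibbs $u$-state and hence has absolutely continuous conditionals.

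Item (1), however, has a genuine gap as written. You assert that, because the distortion of $\det Df^n\mid E^u$ on the pre-plaque $f^{-n}(P)$ is at most $e^K$, ``any probability on $f^{-n}(P)$ pushes forward to a measure on $P$ whose density has oscillation at most $e^K$''. That step is false: the pushforward of a Dirac mass is not even absolutely continuous, and the relevant measure --- the conditional of $m$ on $f^{-n}(P)$, which is the normalized restriction of the conditional on the ambient plaque and has an a priori merely measurable density --- can have arbitrarily wild oscillation for each fixed $n$; the distortion estimate controls only the Jacobian factor, not this unknown density. The actual argument must let $n\to\infty$ and show that the ratio of that density at the two exponentially close preimages $f^{-n}(x)$, $f^{-n}(y)$ tends to $1$ in an almost-everywhere sense (a Lebesgue differentiation or martingale argument, comparing measures of small sub-disks rather than pointwise densities), which yields the identity $\alpha_\Phi(x)/\alpha_\Phi(y)=\prod_{j\ge 1}\det Df\mid E^u(f^{-j}(y))\,/\,\det Df\mid E^u(f^{-j}(x))$ for almost every pair on almost every plaque; boundedness away from $0$ and $\infty$ then follows from this formula, the distortion bound, and the normalization of the conditionals on plaques of bounded size. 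This limiting step is exactly the technical content of the proof in \cite{Beyond} and is missing from your sketch. A milder instance of the same issue appears in your item (2): conditionals are defined only on almost every plaque, not on the particular plaque through a given point $z$ of the support, so one must first saturate the plaques of a full-measure set of transversal parameters and then use closedness of $\supp m$ and continuity of the foliation before iterating forward to fill out the whole leaf.
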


Now let $f\in\cP^k_*(N)$. Recall that $\pi_c:N\to\Nc$ denotes the natural quotient map
and $\fc:\Nc\to\Nc$ is the hyperbolic homeomorphism induced by $f$ in the leaf space.
Given small neighborhoods $V^s_\xi\subset W_\vep^s(\xi)$ and $V^u_\xi\subset W_\vep^u(\xi)$
inside the corresponding stable and unstable sets, the map
\begin{equation}\label{eq.bracket}
(\eta,\zeta) \mapsto [\eta,\zeta]
\end{equation}
defines a homeomorphism between $V^u_\xi\times V^s_\xi$ and some neighborhood
$V_\xi$ of $\xi$.
A probability measure $\mu$ on $\Nc$ has \emph{local product structure}
if for $\mu$-almost every point $\xi$ and any such product neighborhood $V_\xi$ the restriction
$\mu \mid V_\xi$ is equivalent to a product $\nu^u\times\nu^s$, where $\nu^u$ is a measure
on $V_\xi^u$ and $\nu^s$ is a measure on $V_\xi^s$.

%

In the sequel we prove three additional facts about Gibbs $u$-states that
are important for our arguments.

\begin{proposition}\label{p.finitenessinleafspace}
Take $f \in \cP_*^k(N)$, $k>1$ such that the center stable foliation
is absolutely continuous. For every ergodic Gibbs $u$-state $m$ the
support of the projection $(\pi_c)_*(m)$ coincides with some attractor
of $\fc$. In particular, periodic points are dense in the support of
$(\pi_c)_*(m)$.

Moreover, any two such projections with the same support must coincide.
In particular, the set of projections of all ergodic Gibbs $u$-states of
$f$ down to $\Nc$ is finite.
\end{proposition}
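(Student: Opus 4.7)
My plan is to fix an ergodic Gibbs $u$-state $m$, set $\mu=(\pi_c)_*m$, and establish in sequence that (i) $\mu$ is ergodic for $\fc$, (ii) $\supp(\mu)$ coincides with an attractor of $\fc$, and (iii) $\mu$ is determined by $\supp(\mu)$. Item (i) is straightforward: any $\fc$-invariant Borel set $A\subset\Nc$ pulls back to the $f$-invariant set $\pi_c^{-1}(A)$, which has $m$-measure in $\{0,1\}$, so $\mu(A)\in\{0,1\}$. Since $N$ is compact one has $\supp(\mu)=\pi_c(\supp(m))$. Combining this with the $\cW^u$-saturation of $\supp(m)$ (Proposition~\ref{p.Gibbsustates}(2)) and with the fact that $\pi_c$ carries each local strong unstable leaf of $f$ onto a local unstable set of $\fc$---a consequence of the fiber bundle structure of the center foliation together with normal hyperbolicity---I conclude that $\supp(\mu)$ is $W^u$-saturated in $\Nc$.

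For (ii) I would invoke the spectral decomposition of the hyperbolic homeomorphism $\fc$. Ergodicity of $\mu$ forces $\supp(\mu)$ to be a transitive closed $\fc$-invariant subset, and hence to be contained in some basic piece $\Omega$. The $W^u$-saturation just established means that $\Omega$ contains the full unstable set of some (hence every) of its points, and by the standard characterization $\Omega$ must be one of the attractors $\Lambda_i$. The density of $W^u(\xi)$ in its connected component $\Lambda_{i,j}$, together with the cyclic $\fc$-permutation of these components and the $\fc$-invariance of $\supp(\mu)$, then upgrades the inclusion to the equality $\supp(\mu)=\Lambda_i$. Density of periodic points in $\supp(\mu)$ is immediate from Anosov's closing lemma applied on the basic piece.

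For (iii), and hence the finiteness, consider two ergodic Gibbs $u$-states $m_1,m_2$ with common projected support $\Lambda_i$. The plan is to show that each projection $\mu_j$ admits a local product structure on $\Lambda_i$ whose factor transverse to $W^s$ lies in a fixed Lebesgue class, and then to run a Hopf-type argument. Concretely, in a product neighborhood $V_\xi\simeq V^u_\xi\times V^s_\xi$ around $\xi\in\Lambda_i$, the Gibbs $u$-state property of $m_j$ (Proposition~\ref{p.Gibbsustates}(1)) together with the fiber bundle structure produces a disintegration of $\mu_j$ along local unstable pieces whose conditionals have densities in the Lebesgue class on $V^u_\xi$; the absolute continuity of $\cW^{cs}$ is what guarantees that this Lebesgue class does not depend on the choice of lift within the center leaf $\pi_c^{-1}(\xi)$, so the disintegration is genuinely intrinsic to $\mu_j$. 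A Hopf-type argument exploiting the bracket in $\Nc$ then yields mutual absolute continuity of $\mu_1$ and $\mu_2$ on $\Lambda_i$, and ergodicity forces $\mu_1=\mu_2$. Finiteness follows at once since the possible supports range over the finitely many attractors $\Lambda_1,\dots,\Lambda_k$. I expect the main obstacle to be precisely this step: translating the $N$-level absolute continuity of $\cW^{cs}$ and the Gibbs $u$-property into usable transverse regularity of $\mu_j$ on $\Nc$, and adapting the Hopf argument from the smooth Axiom A setting to the merely topological hyperbolic homeomorphism $\fc$.
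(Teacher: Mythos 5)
Your steps (i) and (ii) coincide with the paper's: ergodicity of $\mu=(\pi_c)_*m$, the identity $\supp\mu=\pi_c(\supp m)$, the $\cW^u$-saturation of $\supp m$ from Proposition~\ref{p.Gibbsustates}, and the spectral decomposition of $\fc$ give that $\supp\mu$ is an attractor, with periodic points dense in it. Where you genuinely diverge is the uniqueness step (iii), and there the paper's argument is much softer than yours. Inside $U=\pi_c^{-1}(U_c)$, for $U_c$ a neighborhood of a point of the common support $\Lambda$, it picks strong unstable disks $D_1,D_2$ with Lebesgue almost every point in the basins $B(m_1)$, $B(m_2)$ respectively (possible because the unstable conditionals of an ergodic Gibbs $u$-state are equivalent to leaf Lebesgue), chooses them so that the center stable foliation induces a holonomy $h^{cs}:D_1\to D_2$, and uses absolute continuity of $\cW^{cs}$ to find $x_1\in D_1\cap B(m_1)$ with $x_2=h^{cs}(x_1)\in D_2\cap B(m_2)$ on the same center stable leaf; then $\pi_c(x_1),\pi_c(x_2)$ lie on one stable set of $\fc$, while $\pi_c(B(m_i))\subset B(\mu_i)$ and the basins $B(\mu_i)$ are unions of stable sets, so $B(\mu_1)\cap B(\mu_2)\neq\emptyset$ and $\mu_1=\mu_2$. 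No disintegration of $\mu_j$ on the quotient and no Hopf argument are needed. Your route requires exactly the transverse regularity you flag as the main obstacle: that the unstable-direction conditionals (or projections) of $\mu_j$ lie in a lift-independent Lebesgue class on $\Nc$. This is in substance the paper's Proposition~\ref{p.ustates_product}, which is proved separately (by disintegrating $m$ along local strong unstable plaques and using that $cs$-holonomies preserve the unstable coordinate and are absolutely continuous) and is used elsewhere, namely to invoke Avila--Viana in Proposition~\ref{p.continuousdisintegration}, not here. So your plan is viable, but it effectively proves Proposition~\ref{p.ustates_product} as a lemma, whereas the paper's basin/holonomy argument sidesteps all measure-class questions on the quotient.

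One imprecision in your (iii): a Hopf argument does not naturally give mutual absolute continuity of $\mu_1$ and $\mu_2$ (even with equivalent unstable factors, the stable factors could a priori be mutually singular, which would make the measures locally singular). What it gives directly is that forward Birkhoff averages of continuous functions take the same value for both measures, or equivalently that their sets of generic points intersect: a full-$\mu_j$-measure set of generic points fills whole local stable slices over a set of unstable coordinates of full $\nu^u_j$-measure, and since $\nu^u_1$ and $\nu^u_2$ are equivalent (non-singularity would already suffice) some stable slice carries generic points of both. Combined with ergodicity this yields $\mu_1=\mu_2$; note that full product structure of the $\mu_j$ is not needed, only the regularity of their unstable projections.
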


\begin{proposition}\label{p.ustates_product}
Take $f\in\cP_*^k(N)$, $k>1$ such that the center stable foliation
is absolutely continuous. If $m$ is a Gibbs $u$-state for $f$ then
$\mu=(\pi_c)_*(m)$ has local product structure.
\end{proposition}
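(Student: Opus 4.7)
The plan is to prove the conclusion locally, by combining the Gibbs decomposition of $m$ along strong unstable plaques with a suitably straightened foliated chart near a generic point. Fix $\xi\in\supp\mu$ and pick a representative $z\in N$ with $\pi_c(z)=\xi$. Using dynamical coherence and the fiber bundle structure of $\cW^c$, I would build around $z$ a ``tri-foliated'' box $B$ with coordinates $(u,c,s)\in I_1^{d_u}\times I_1^{d_c}\times I_1^{d_s}$ in which the three one-step invariant foliations $\cW^u$, $\cW^c$, $\cW^s$ are simultaneously straightened, so that $\cW^{cu}$ and $\cW^{cs}$ become coordinate subfoliations as well. In these coordinates $\pi_c$ reads as $(u,c,s)\mapsto(u,s)$; writing $V_\xi:=\pi_c(B)\cong I_1^{d_u}\times I_1^{d_s}$, a direct check identifies the local unstable set $W^u_\vep(\xi)$ of $\fc$ with the $u$-axis, the local stable set with the $s$-axis, and the bracket \eqref{eq.bracket} with the ordinary product pairing.

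Next I would apply the defining identity \eqref{eq.Gibbsu} of a Gibbs $u$-state to $B$, viewed as a foliated box for $\cW^u$, to obtain
\[
m|_B=\alpha(u,c,s)\,du\,dm^{cs}(c,s),
\]
with $\alpha$ bounded above and below by positive constants on $\supp m$ by Proposition~\ref{p.Gibbsustates}(1). I would then disintegrate the transverse measure $m^{cs}$ along the $c$-fibers as $m^{cs}=\int (m^{cs})_s\,d\hat m^s(s)$, where $\hat m^s$ is its projection onto the $s$-coordinate and $(m^{cs})_s$ are conditional probabilities, and push $m|_B$ forward by $(u,c,s)\mapsto(u,s)$ to get
\[
(\pi_c)_*(m|_B)=\tilde\alpha(u,s)\,du\,d\hat m^s(s),\qquad \tilde\alpha(u,s):=\int\alpha(u,c,s)\,d(m^{cs})_s(c).
\]
Because $\tilde\alpha$ is an average of the bounded density $\alpha$ against a probability measure, it is itself bounded between two positive constants. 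Taking $\nu^u:=du$ on $V_\xi^u\cong I_1^{d_u}$ and $\nu^s:=\hat m^s$ on $V_\xi^s\cong I_1^{d_s}$, this exhibits $\mu|_{V_\xi}$ as equivalent to the product $\nu^u\times\nu^s$ with bounded Radon--Nikodym derivative, which is precisely the required local product structure.

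The main delicate step is the geometric setup: producing a chart in which $\cW^u$, $\cW^c$, $\cW^s$ are simultaneously straightened, and verifying that its coordinate product structure coincides with the abstract bracket product structure on $\Nc$ from \S\ref{ss.leafspace}. Once this identification is secured, the rest is a routine bounded change of variables in a measure disintegration. Absolute continuity of $\cW^{cs}$ is not essential to the local computation itself, but it is convenient (and is invoked in the companion Proposition~\ref{p.finitenessinleafspace}) to ensure that the $\mu$-generic base points at which the argument is applied form a robust set on which the conclusion can be asserted uniformly.
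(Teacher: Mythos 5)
There are two problems with your geometric setup, and the second one is fatal. First, a box in which $\cW^u$, $\cW^c$ and $\cW^s$ are \emph{all} coordinate foliations cannot exist in general: in such coordinates each slice $\{c=c_0\}$ would be a topological manifold of dimension $d_u+d_s$ containing the local strong unstable and strong stable plaques through each of its points, i.e.\ a local joint integral manifold of $E^u\oplus E^s$; this is incompatible with accessibility, which is the standing hypothesis in the results where this proposition is applied (and holds on an open dense set near skew-products). This by itself could be repaired, because your measure computation never uses the straightening of $\cW^s$: a box straightening only $\cW^u$, $\cW^c$, $\cW^{cs}$, $\cW^{cu}$ does exist — it is essentially the paper's chart \eqref{eq.recthomeo} refined by parametrizing each local unstable leaf as a graph over $W^u_\vep(\ell_0)$ — but it exists only as a homeomorphism, and that is where the real trouble starts.

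The genuine gap is the formula $m|_B=\alpha(u,c,s)\,du\,dm^{cs}(c,s)$ with a single reference measure $du$ and $\alpha$ bounded between positive constants, combined with your closing claim that absolute continuity of $\cW^{cs}$ ``is not essential to the local computation.'' The Gibbs property \eqref{eq.Gibbsu} and Proposition~\ref{p.Gibbsustates}(1) control the conditional of $m$ on each unstable plaque with respect to the Riemannian leaf volume \emph{of that plaque}; in the paper's foliated boxes the plaques are parametrized diffeomorphically, so leaf volume is comparable to the coordinate Lebesgue measure. In your chart, each plaque is parametrized by the base coordinate $u$, i.e.\ via $\pi_c$, and transporting the conditionals of two different plaques to the common $u$-axis means composing with the $cs$-holonomy between those unstable disks (for plaques inside one $cu$-leaf, with the center holonomy in that leaf, which again coincides with a $cs$-holonomy). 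These holonomies are a priori only homeomorphisms; unless they are absolutely continuous, the conditionals carried to the $u$-axis need not lie in one measure class, no density $\alpha$ against a common $du\times m^{cs}$ exists, and the product structure of $(\pi_c)_*m$ can fail. This is precisely where the hypothesis must enter, and it is where the paper uses it: absolute continuity of the $cs$-holonomies between local strong unstable leaves (\eqref{eq.abscontrel1}) is what forces the conditional measures $\mu^u_\eta$ of the projected measure to be almost all equivalent (\eqref{eq.abscontrel3}), which is the content of local product structure. As written, your first disintegration silently assumes that equivalence, so the argument begs the question at exactly the step the proposition is about.
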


\begin{remark}
Suppose $f$ is volume preserving. The Lebesgue measure $\Leb$ is both
an $s$-state and a $u$-state, because the strong stable foliation
and the strong unstable foliation are both absolutely continuous.
Thus, Proposition~\ref{p.ustates_product} implies that
$(\pi_c)_*(m)$ has local product structure if \emph{either}
$\cW^{cu}$ \emph{or} $\cW^{cs}$ is absolutely continuous.
\end{remark}

\begin{proposition}\label{p.somenegative}
Let $f\in \cP_*^k(N)$, $k>1$ and $\Lambda$ be an attractor of $\fc$.
Suppose the center stable foliation of $f$ is absolutely continuous
and $f$ is accessible on $\Lambda$.
Then every ergodic Gibbs $u$-state of $f$ supported in $\cW^c(\Lambda)$
has some non-positive center Lyapunov exponent.
\end{proposition}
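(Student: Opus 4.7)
The plan is to argue by contradiction: suppose $m$ is an ergodic Gibbs $u$-state supported in $\cW^c(\Lambda)$ — necessarily in a single component $\cW^c(\Lambda_{i,j})$ by ergodicity — whose every center Lyapunov exponent is strictly positive, so that $\Lambda_{\rm sum}:=\sum_i \lambda^c_i(m)>0$. The target contradiction will be a Jensen-style bound on a center sub-plaque, exploiting that center leaves form a compact fiber bundle over $\Nc$ and hence have uniformly bounded $d_c$-dimensional volumes.

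Because $m$ is a Gibbs $u$-state, its conditional measure on any $\cW^u$-plaque $P\subset\supp(m)$ is equivalent to $\Leb^u_P$ with bounded densities (Proposition~\ref{p.Gibbsustates}(1)). Applying Birkhoff's theorem to the continuous function $\log|\det(Df|E^c)|$, one finds that for $\Leb^u_P$-almost every $z\in P$,
\[
\tfrac{1}{n}\log|\det(Df^n|E^c_z)| \;\longrightarrow\; \Lambda_{\rm sum} \;>\; 0.
\]
The heart of the argument is to transfer this Lebesgue-a.e.\ generic property off the possibly Lebesgue-thin set $\supp(m)$ onto a full $\cW^c$-sub-plaque. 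Here the two principal hypotheses enter: the $\cW^u$-saturation of $\supp(m)$ combined with the automatic absolute continuity of $\cW^s,\cW^u$, the hypothesized absolute continuity of $\cW^{cs}$, and accessibility of $f$ on $\Lambda$ are assembled in a Hopf-style saturation argument that spreads the generic set to a Lebesgue-positive subset of $\cW^c(\Lambda_{i,j})$. Absolute continuity of $\cW^{cs}$ then allows disintegration of $\Leb$ over $\cW^{cs}$-plaques, and since $\cW^c$ is a smooth sub-foliation of each $\cW^{cs}$-plaque (by dynamical coherence), a second Fubini produces a $\cW^c$-sub-plaque $\ell$ on which $\Leb_\ell$-almost every $z$ satisfies the above Birkhoff convergence.

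With such an $\ell$ in hand, Jensen's inequality applied to the concave $\log$ and the probability measure $d\tilde\Leb_\ell:=d\Leb_\ell/|\ell|_{d_c}$ yields
\[
\int_\ell \tfrac{1}{n}\log|\det(Df^n|E^c_z)|\,d\tilde\Leb_\ell(z)
\;\le\; \tfrac{1}{n}\log\frac{|f^n(\ell)|_{d_c}}{|\ell|_{d_c}},
\]
because $\int_\ell |\det(Df^n|E^c_z)|\,d\Leb_\ell(z)=|f^n(\ell)|_{d_c}$ by change of variables along the center-tangent submanifold. The right-hand side tends to $0$ as $n\to\infty$, since $f^n(\ell)$ is contained in a single center leaf whose $d_c$-volume is uniformly bounded. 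The left-hand side tends to $\Lambda_{\rm sum}>0$ by dominated convergence, the integrand being uniformly bounded by $\log\|Df\|_\infty$. This is the sought contradiction.

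The main obstacle is the middle step — propagating the Lebesgue-a.e.\ generic property from $\cW^u$-plaques inside the potentially Lebesgue-thin $\supp(m)$ to a $\Leb_\ell$-full subset of some $\cW^c$-sub-plaque. This is exactly where the combined strength of the hypotheses (absolute continuity of $\cW^{cs}$ and accessibility on $\Lambda$) must interact with the Gibbs $u$-state structure in a careful Hopf-type saturation; without these, the generic set on a $\cW^u$-plaque lifts at best to a Lebesgue-null selection of center sub-plaques, and the Jensen contradiction would not trigger.
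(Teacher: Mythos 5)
Your final Jensen/bounded-volume computation is fine as far as it goes (it is the same volume-versus-expansion tension the paper exploits in Lemma~\ref{l.volume}), but the proof has a genuine gap exactly where you acknowledge the ``main obstacle'': the claimed Hopf-style saturation producing a center sub-plaque $\ell$ on which $\Leb_\ell$-almost every point is forward Birkhoff generic for $\log|\det(Df\mid E^c)|$ is not proved, and it cannot be carried out as described. Forward Birkhoff genericity is constant along strong \emph{stable} leaves only; it is not preserved by unstable holonomy, so the $u$-legs of accessibility paths do not spread the generic set, and the $su$-holonomies are in any case not absolutely continuous in the sense you would need. Saturating a full-$\Leb^u$ subset of an unstable plaque by strong stable leaves gives a set of codimension $d_c$, hence of zero ambient Lebesgue measure, so the proposed disintegration of $\Leb$ over $\cW^{cs}$-plaques followed by ``a second Fubini'' has nothing to start from: you never establish that the generic set has positive measure in the ambient manifold, nor inside any $cs$-plaque. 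Worse, the sought plaque does not exist: under your contradiction hypothesis the paper's Proposition~\ref{p.finiteGamma} (via Pliss hyperbolic times and the uniform bound on the volume of center leaves) shows that the set of points whose forward orbit expands the center at a definite rate meets every center leaf in at most $n_0$ points, so its trace on any center plaque is finite, in particular $\Leb_\ell$-null. Your argument is thus trying to prove, as an intermediate step, a statement that is structurally incompatible with the very mechanism that makes the proposition true.

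The paper's route is different and is worth internalizing: after reducing to an iterate $f^{k_0}$ with $\int\log\|Df^{-k_0}\mid E^c\|^{-1}\,d\nu_*>0$, it uses the finiteness-per-center-leaf statement (Corollary~\ref{c.Amie}, in the spirit of Ruelle--Wilkinson) not to reach Jensen, but combinatorially: absolute continuity of $\cW^{cs}$ lets one project full-$\Leb^u$ subsets of unstable disks onto unstable leaves near a periodic center leaf $\ell_0$, forcing every point of $D^u\cap\cW^s(\ell_0)$ to lie on the strong stable manifold of a periodic point of bounded period (Lemma~\ref{l.claim1}); accessibility on $\Lambda$ then propagates periodicity, through approximating $su$-paths with corners on periodic center leaves, to \emph{every} point of $\ell_0$ (Lemma~\ref{l.claim2}); finally Katok's theorem, which guarantees hyperbolic periodic points in the support of a hyperbolic measure, yields the contradiction. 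If you want to salvage your approach, you would have to replace the saturation step by an argument of this kind; as written, the key step is missing and the strategy behind it fails.
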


As a special case, we get that if $f\in\cP_1^k(N)$, $k>1$ is accessible
on an attractor $\Lambda$ of $\fc$ and the center stable foliation is
absolutely continuous, then the (unique) center Lyapunov exponent of every
ergodic Gibbs $u$-state supported in $\cW^c(\Lambda)$ is non-positive.

The proofs of these propositions are given in Sections~\ref{ss.finiteness}
through~\ref{ss.allnegative}.

\subsection{Finiteness in leaf space}\label{ss.finiteness}

Here we prove Proposition~\ref{p.finitenessinleafspace}. Let $m_1$ be any ergodic
Gibbs $u$-state and $\mu_1=(\pi_c)_*(m_1)$. Notice that $\mu_1$ is ergodic and so
its support is a transitive set for $\fc$. Moreover, $\supp\mu_1=\pi_c(\supp m_1)$
consists of entire unstable sets, because the support of $m_1$ is $\cW^u$-saturated
(Proposition~\ref{p.Gibbsustates}). Thus, $\supp\mu_1$ is an attractor $\Lambda$
of $\fc$. As pointed out before, periodic points are dense in each attractor of $\fc$.

Now we only have to show that if $\mu_2=(\pi_c)_* m_2$ for another ergodic Gibbs
$u$-state $m_2$ and $\supp\mu_2=\Lambda=\supp\mu_1$ then $\mu_1=\mu_2$.
For this, take $x_c\in \Lambda$, let $U_c$ be a neighborhood of $x_c$ in the
quotient space $\Nc$, and let $U=\pi_c^{-1}(U_c)$. Then $U$ has positive $m_i$-measure
for $i=1, 2$. So, since the $m_i$ are ergodic Gibbs $u$-states, there are disks
$D_i\subset U$, $i=1, 2$ contained in strong unstable leaves and such that Lebesgue almost every
point in $D_i$ is in the basin $B(m_i)$ of $m_i$. Moreover, these disks may be chosen
such that the center stable foliation induces a holonomy map $h^{cs}: D_1 \to D_2$.
Since the center stable foliation is absolutely continuous, it follows that $h^{cs}$
maps some point $x_1\in D_1 \cap B(m_1)$ to a point $x_2\in D_2\cap B(m_2)$ in the
basin of $m_2$. Then $x_1$ and $x_2$ belong to the same center stable leaf of $f$,
and so their projections $\pi_c(x_1)$ and $\pi_c(x_2)$ belong to the same stable
set of $\fc$. Notice that $\pi_c(B(m_i))\subset B(\mu_i)$ for $i=1, 2$, and so each
point $\pi(x_i)\in B(\mu_i)$. Since either basin consists of entire stable sets,
this proves that $B(\mu_1)$ and $B(\mu_2)$ intersect each other, and so $\mu_1=\mu_2$.
This completes the proof of Proposition~\ref{p.finitenessinleafspace}.

\subsection{Local product structure}\label{ss.localproductstructure}

Here we prove Proposition~\ref{p.ustates_product}. Let $m$ be any Gibbs $u$-state and $\ell_0$
be any center leaf. Since the center leaves form a fiber bundle, we may find a neighborhood 
$V\subset N/\cW^c$ and a homeomorphism 
$$
\phi: V \times \ell_0 \mapsto \pi_c^{-1} (V), \quad (\ell,\zeta) \mapsto \phi(\theta,\zeta)
$$
that maps each vertical $\{\ell\}\times \ell_0$ to the corresponding center leaf $\ell$.
Clearly, we may choose $V$ to be the image of the bracket (recall Section~\ref{ss.leafspace})
$$
W^u_\vep(\ell_0)\times W^s_\vep(\ell_0) \to V, \quad (\xi,\eta) \mapsto [\xi,\eta]
$$
for some small $\vep>0$. Then, by dynamical coherence, the homeomorphism 
\begin{equation}\label{eq.recthomeo}
W^u_\vep(\ell_0)\times W^s_\vep(\ell_0) \times \ell_0 \to \pi_c^{-1}(V),
\quad (\xi,\eta,\zeta) \mapsto \phi([\xi,\eta],\zeta)
\end{equation}
maps each $\{\xi\}\times W_\vep^s(\ell_0) \times \ell_0$ onto a center stable leaf and
each $W_\vep^u(\ell_0) \times \{\eta\}\times \ell_0$ onto a center unstable leaf.
For each $x\in \pi_c^{-1}(V)$, let  $\cW^u_{loc}(x)$ denote the local strong unstable leaf over $V$,
that is, the connected component of  $\cW^u(x)\cap \pi_c^{-1}(V)$ that contains $x$.
Each $\cW^u_{loc}(x)$ is a graph over the unstable set $W^u(\pi_c(x))$ and the center stable
holonomy defines a homeomorphism
$$
h_{x,y}^{cs}: \cW^u_{loc}(x) \to \cW^u_{loc}(y)
$$
between any two local strong unstable leaves. By assumption, all these homeomorphisms are
absolutely continuous. Now let
$$
m \mid \pi_c^{-1}(V) = \int m_x \,d\hat m
$$
be the disintegration of $m$ relative to the partition of $\pi^{-1}_c(V)$ into local strong unstable leaves.
By definition of Gibbs $u$-states, each $m_x$ is equivalent to the Lebesgue measure along
$\cW^u_{loc}(x)$. It follows that the center stable holonomies are absolutely continuous relative to
the conditional probabilities of $m$ along local strong unstable leaves: 
\begin{equation}\label{eq.abscontrel1}
m_x(E) = 0 \text{ if and only if } m_y(h_{x.y}^{cs}(E))=0 
\end{equation} 
for $x$ and $y$ in some full $m$-measure subset of $\pi_c^{-1}(V)$ and for any measurable
set $E \subset \cW^u_{loc}(x)$. By the construction of \eqref{eq.recthomeo}, center stable
holonomies preserve the coordinate $\xi$. Thus, identifying $\pi_c^{-1}(V)$ with the space
$W_\vep(\ell_0)\times W^s_\vep(\ell_0) \times \ell_0$ through the homeomorphism 
\eqref{eq.recthomeo}, property \eqref{eq.abscontrel1} becomes
\begin{equation}\label{eq.abscontrel2}
m_x(A\times W_\vep^s(\ell_0)\times\ell_0) = 0 \text{ if and only if } 
m_y(A\times W^s_\vep(\ell_0)\times\ell_0)=0 
\end{equation} 
for any measurable set $A \subset W^u_\vep(\ell_0)$ and for $m$-almost every $x$ and $y$ in
$\pi_c^{-1}(V)$. Let $\mu \mid V = \int \mu^u_\eta \,d\mu^s(\eta)$ be the disintegration of $\mu$
relative to the partition of $V$ into unstable slices $W^u(\ell_0)\times\{\eta\}$; notice that $\mu^s$
is just the projection of $\mu\mid V$ to $W^s_\vep(\ell_0)$.
Projecting $m\mid \pi_c^{-1}(V)$ down to $V \approx W^u_\vep(\ell_0) \times W^s_\vep(\ell_0)$, 
property \eqref{eq.abscontrel2} yields
\begin{equation}\label{eq.abscontrel3}
\mu_\eta(A\times W_\vep^s(\ell_0)) = 0 \text{ if and only if }  \mu_{\eta'}(A\times W^s_\vep(\ell_0))=0 
\end{equation} 
for any measurable set $A \subset W^u_\vep(\ell_0)$ and for $\mu$-almost every $\eta$ and $\eta'$
in $V$. This means that the conditional probabilities $\mu^u_\eta$ are (almost) all equivalent.
Consequently, there is $\rho:W^u_\vep(\ell_0)\times W^s_\vep(\ell_0)\to (0,\infty)$ such that 
$\mu^u_\eta = \rho(\cdot,\eta) \mu^u$ at $\mu$-almost every point, where $\mu^u$ denotes the
projection of $\mu\mid V$ to $W^u_\vep(\ell_0)$. Replacing in the disintegration of $\mu\mid V$,
we get that $\mu \mid V = \rho \, \mu^u \times \mu^s$. This proves that $\mu$ has local product
structure, as claimed.

\subsection{Positive Gibbs $u$-states}\label{ss.allnegative}

Here we prove Proposition~\ref{p.somenegative}.
We begin by proving the following fact, which is interesting in itself:

\begin{proposition}\label{p.finiteGamma}
For $f\in \cP_*^1(N)$, given $c>0$ and $l\ge 1$ there
is $n_0$ such that $\#\big(S \cap \Gamma_{c,l}\big) < n_0$ for
every center leaf $S$, where
$$
\Gamma_{c,l}
 =\{x\in N:\liminf\frac{1}{n}\sum_{i=1}^n \log\|Df^{-l}\mid {E^c(f^{il}(x))}\|^{-1}\ge c\}.
$$
\end{proposition}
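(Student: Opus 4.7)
The plan is to argue by contradiction, combining Pliss's lemma with the uniform backward contraction that hyperbolic times provide. Put $g := f^l$, $a(y) := \log \|Df^{-l}\mid E^c(y)\|^{-1}$ and $M := \sup_y |a(y)|$, and fix any $c' \in (0,c)$. Suppose that $x_1, \ldots, x_N \in S \cap \Gamma_{c,l}$ are distinct, and set $\delta_0 := \min_{j \neq k} \operatorname{dist}_S(x_j, x_k) > 0$ using the intrinsic Riemannian distance on $S$. Since $\liminf_n \frac{1}{n}\sum_{i=1}^n a(g^i(x_j)) \ge c > c'$ for every $j$, Pliss's lemma applied to the Birkhoff sums of $a$ along the $g$-orbit of $x_j$ produces a set $H(x_j) \subset \NN$ of \emph{$c'$-hyperbolic times} for $x_j$, of lower density at least $\rho_0 := (c-c')/(2(M-c')) > 0$, every element $m$ of which satisfies
$$
\|Df^{-(m-k)l}\mid E^c(g^m(x_j))\| \le e^{-c'(m-k)} \quad \text{for all } 0 \le k < m.
$$

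The geometric core of the proof is a uniform shrinking consequence of hyperbolic times, which requires only $C^1$ regularity thanks to the uniform continuity of $\|Dg^{-1}\mid E^c\|$ on the compact manifold $N$. I would show that there exist $\delta_1 > 0$ and $c'' \in (0, c')$, depending only on $f$ and $c'$, such that for every $x$ and every $m \in H(x)$ there is a branch of $g^{-m}$ defined on the entire center ball $B(g^m(x), \delta_1) \subset \cW^c(g^m(x))$ with
$$
g^{-k}\bigl(B(g^m(x), \delta_1)\bigr) \subset B\bigl(g^{m-k}(x), \delta_1 e^{-c'' k}\bigr), \quad 0 \le k \le m.
$$
The proof is an induction on $k$: the mean value theorem inside a center leaf gives $\operatorname{dist}_{\cW^c}(g^{-1}(y), g^{m-k-1}(x))$ bounded by $\sup_{B(g^{m-k}(x),\delta_1)} \|Dg^{-1}\mid E^c\|$ times $\operatorname{dist}_{\cW^c}(y, g^{m-k}(x))$, and the hyperbolic time condition combined with uniform continuity makes this supremum at most $e^{-c''}$ provided $\delta_1$ is smaller than a suitable modulus of continuity. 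Setting $V_m(x) := g^{-m}(B(g^m(x), \delta_1))$, this gives $\operatorname{diam}(V_m(x)) \le 2\delta_1 e^{-c'' m}$, while $g^m(V_m(x))$ is an honest center ball of radius $\delta_1$, hence of $\Leb^c$-volume at least some uniform $c_0 \delta_1^{d_c} > 0$.

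I would then couple these ingredients through averaging. Since each $H(x_j)$ has lower density at least $\rho_0$,
$$
\liminf_{T\to\infty} \frac{1}{T}\sum_{m=1}^T \#\{j : m \in H(x_j)\} \ge N\rho_0,
$$
so for arbitrarily large $m$ the set $J_m := \{j : m \in H(x_j)\}$ satisfies $\#J_m \ge N\rho_0/2$. Choose such an $m$ large enough that $2\delta_1 e^{-c'' m} < \delta_0/2$; then the $V_m(x_j)$ for $j \in J_m$ are pairwise disjoint in $S$. Because $g^m|_S$ is a diffeomorphism onto $g^m(S)$, the images $\{B(g^m(x_j), \delta_1) : j \in J_m\}$ are also pairwise disjoint in $g^m(S)$, each of volume at least $c_0 \delta_1^{d_c}$. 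Comparing with the uniform upper bound $L_0$ on the volume of a center leaf (from compactness of $N$ and the fiber bundle hypothesis),
$$
\#J_m \cdot c_0 \delta_1^{d_c} \le \Leb^c(g^m(S)) \le L_0,
$$
hence $N\rho_0/2 \le L_0/(c_0\delta_1^{d_c})$, and one may take $n_0 := \lfloor 2L_0/(\rho_0 c_0 \delta_1^{d_c})\rfloor + 1$.

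The main obstacle is establishing the uniformity of $\delta_1$ in the induction of the second paragraph: one must verify that the hyperbolic time condition together with the modulus of continuity of $Df^{-l}$ keeps the pullbacks inside the region where they remain contracting, at every step and for every $x$ and $m$. Once this is in place, the remainder is a clean pigeonhole on the volume-bounded center leaf, which crucially produces an $n_0$ that depends on $f$, $c$, $l$ but not on the particular leaf $S$.
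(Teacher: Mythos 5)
Your proof is correct and takes essentially the same approach as the paper's: Pliss's lemma yields hyperbolic times of positive lower density, at these times backward iterates contract center balls of a uniform size $\delta_1$, and a pigeonhole count against the uniformly bounded volume of center leaves gives an $n_0$ depending only on $f$, $c$, $l$. The only (minor) difference is in the bookkeeping of the final count: the paper averages the volumes $\Leb^c(f^{il}(U_j))$ of images of disjoint neighborhoods over time (Lemma~\ref{l.volume}) and sums over $j$, whereas you locate a single large time that is a common hyperbolic time for a definite fraction of the points and use disjointness of the contracted balls at that time; both rest on exactly the same two uniform estimates.
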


\begin{proof}
Recall that $\Leb^{c}$ denotes the Riemannian volume on center leaves.
The main ingredient is

\begin{lemma}\label{l.volume}
Given $c>0$ and $l \ge 1$ there exists $\delta>0$ such that for
any $x\in S \cap \Gamma_{c,l}$ and any neighborhood $U$ of $x$
inside the center leaf $S$ that contains $x$, one has
$$
\liminf\frac{1}{n}\sum_{i=0}^{n-1}\Leb^c(f^{i l}(U))\ge\delta.
$$
\end{lemma}

\begin{proof}
Let $x\in  S \cap \Gamma_{c,l}$ be fixed. Fix $0 < c_{1} <
c_{2} < c$ and define $H(c_{2})$ to be the set of $c_2$-hyperbolic times
for $x$, that is, the set of times $m\ge 1$ such that
\begin{equation}\label{eq.hyperbolictimes}
\frac{1}{k}\sum_{i=m-k+1}^{m} \log\|Df^{-l} \mid E^c_{f^{il}(x)}\|^{-1} \ge c_{2}
\quad\text{for all $1 \leq k \le m$.}
\end{equation}
By the Pliss Lemma (see \cite{Al00,ABV00}) , there exist $n_{1}\ge 1$ and $\delta_{1}>0$
such that
$$
\#\big(H(c_{2}) \cap [1,n)\big) \ge n \delta_{1}
\quad\text{for all $n\ge n_{1}$.}
$$
Notice that \eqref{eq.hyperbolictimes} implies $Df^{-k l}$ is an exponential
contraction on $E^{c}_{f^{ml}(x)}$:
$$
\|Df^{-k l} \mid E^c_{f^{ml}(x)}\|
\le \prod_{i=m-k+1}^{m} \|Df^{-l} \mid E^c_{f^{il}(x)}\|
\le e^{- c_{2} k}
\quad\text{for all $1 \le k \le m$.}
$$
It also follows from \cite{ABV00} that the points $f^{ml}(x)$ with $m\in H(c_{2})$
admit backward-contracting center disks with size uniformly bounded from below:
there is $r>0$ depending only on $f$ and the constants $c_{1}$ and $c_{2}$
such that
$$
f^{-kl}(B_r^c(f^{ml}(x))) \subset B^c_{e^{-c_1 k} r}(f^{(m-k)l}(x))
\quad\text{for all $1 \leq k \leq m$.}
$$
where $B^c_\rho(y)$ denotes the ball inside $\cW^{c}_{y}$ of radius $\rho$
around any point $y$. Let $a_{1}>0$ be a lower bound for $m^c(B^c_r(y))$
over all $y \in N$. Fix $n_2$ such that the ball of radius $e^{-c_1 k} r$ around
$x$ is contained in $U$ for every $k\ge n_2$. Then, in particular,
$$
f^{ml}(U) \supset B^{c}_{r}(f^{ml}(x))
\quad\text{and so}\quad m^{c}(f^{ml}(U))\ge a_{1}
$$
for every $m\in H(c_2)$ with $m\ge n_{2}$. So, for $n\gg\max\{n_1,n_2\}$,
$$
\frac{1}{n}\sum_{i=0}^{n-1}m^c(f^{i l}(U))
 \ge \frac 1n a_{1} \big[\#(H(c_{2}) \cap [1,n))-n_{2}\big]
 \ge \frac 1n a_{1} \big[n\delta_{1}-n_{2}\big]
 \ge \frac {\delta_{1}}{2} a_1\,.
$$
To finish the proof of Lemma~\ref{l.volume} it suffices to take $\delta=a_1\delta_{2}/2$.
\end{proof}

To deduce Proposition~\ref{p.finiteGamma} from Lemma~\ref{l.volume},
take any $n_0 \ge V/\delta$ where $V$ is an upper bound for the
volume of center leaves. Suppose $S \cap \Gamma_{c,l}$ contains
$n_0$ distinct points $x_j$, $j=1, \ldots, n_0$. Let $U_j$, $j=1,
\ldots, n_0$ be pairwise disjoint neighborhoods of the $x_j$
inside $S$. Take $n$ large enough that
$$
\frac{1}{n}\sum_{i=0}^{n-1}m^c(f^i(U_j))>\delta
\quad\text{for $1\leq j \leq n_0$.}
$$
Then
$$
V \ge \frac{1}{n}\sum_{i=0}^{n-1}m^c(f^i(S))
 \ge \sum_{j=1}^{n_0} \frac{1}{n} \sum_{i=0}^{n-1} m^c(f^i(U_j))
 > n_0\delta > V.
$$
This contradiction proves Proposition~\ref{p.finiteGamma}.
\end{proof}

\begin{proof}[Proof of Proposition~\ref{p.somenegative}]
We argue by contradiction. Suppose there exists some ergodic Gibbs
$u$-state $\nu$ supported in $\cW^c(\Lambda)$ whose center
Lyapunov exponents are all positive.

\begin{lemma}
There is $k_0 \ge 1$ and some ergodic Gibbs $u$-state $\nu_*$ of
$f^{k_0}$ supported in $\cW^c(\Lambda)$ such that
\begin{equation}\label{eq.nue}
\int \log \|Df^{-k_0} \mid E^c_x\|^{-1} d\nu_*(x) > 0.
\end{equation}
\end{lemma}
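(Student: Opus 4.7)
The plan is to take $\nu_*$ to be an ergodic component of $\nu$ itself, viewed now under a sufficiently high iterate $f^{k_0}$. The key observation is that, by definition, $\log\|Df^{-k}\mid E^c_x\|^{-1}$ is the logarithm of the co-norm of $Df^{-k}\mid E^c_x$, i.e.\ of the minimum-expansion rate of $Df^k$ on $E^c_x$. Consequently, at every Oseledets-regular point,
\[
\frac1k\log\|Df^{-k}\mid E^c_x\|^{-1}\xrightarrow[k\to\infty]{}\lambda^c_{\min}(x),
\]
where $\lambda^c_{\min}(x)$ denotes the smallest Lyapunov exponent of $Df\mid E^c_x$. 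Since $\nu$ is assumed ergodic with all center exponents strictly positive, $\lambda^c_{\min}(x)$ equals a positive constant $\lambda^c_{\min}>0$ at $\nu$-almost every point.

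Next I would pass to integrated quantities. The sequence $(1/k)\log\|Df^{-k}\mid E^c_x\|^{-1}$ is uniformly bounded (in $k$ and $x$) by $\max\{\log\|Df\|_\infty,\log\|Df^{-1}\|_\infty\}$, so bounded convergence gives
\[
\lim_{k\to\infty}\frac1k\int\log\|Df^{-k}\mid E^c_x\|^{-1}\,d\nu(x)=\lambda^c_{\min}>0.
\]
Therefore I can fix $k_0\ge 1$ large enough that $\int\log\|Df^{-k_0}\mid E^c_x\|^{-1}\,d\nu(x)>0$.

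Finally I would pass to the ergodic decomposition of $\nu$ for $f^{k_0}$. Note that the strong unstable foliation of $f^{k_0}$ coincides with that of $f$, so $\nu$ still satisfies the defining disintegration property \eqref{eq.Gibbsu} with respect to $f^{k_0}$; it is thus an $f^{k_0}$-invariant Gibbs $u$-state (generally not ergodic). By Proposition~\ref{p.Gibbsustates}(3), every $f^{k_0}$-ergodic component of $\nu$ is again a Gibbs $u$-state. Since the integrals of $\log\|Df^{-k_0}\mid E^c_x\|^{-1}$ against the ergodic components average to the corresponding integral against $\nu$, which is positive, at least one ergodic component $\nu_*$ satisfies \eqref{eq.nue}. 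Its support is contained in $\supp\nu\subset\cW^c(\Lambda)$, and $\cW^c(\Lambda)$ is $f$- and hence $f^{k_0}$-invariant, completing the construction.

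The argument is essentially automatic; the only point that requires care is noticing that the Gibbs $u$-state property transfers from $f$ to $f^{k_0}$ (so that its ergodic decomposition under $f^{k_0}$ consists of Gibbs $u$-states), and that bounded convergence—rather than a more delicate subadditive argument—is available because the integrands are uniformly bounded on $N$. There is no real obstacle.
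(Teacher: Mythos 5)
Your proof is correct and follows essentially the same route as the paper: choose $k_0$ so that $\int\log\|Df^{-k_0}\mid E^c_x\|^{-1}\,d\nu>0$, then pass to the finitely many $f^{k_0}$-ergodic components of $\nu$, which are again Gibbs $u$-states by Proposition~\ref{p.Gibbsustates}, and select one with positive integral. The only cosmetic difference is that you justify the choice of $k_0$ via Oseledets regularity plus dominated convergence, whereas the paper invokes the (subadditivity) argument of Section~2.1 of \cite{Almost}; both are standard and interchangeable here.
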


\begin{proof}
Arguing as in \cite[Section~2.1]{Almost} one can find $k_0\ge 1$
such that
$$
\int \log \|Df^{-k_0} \mid E^c_x\|^{-1} d\nu(x) > 0
$$
The measure $\nu$ needs not be ergodic for $f^{k_0}$ but, since it
is ergodic for $f$, it has a finite number $k$ of ergodic
components $\nu_i$ ($k$ divides $k_0$). Moreover,
$$
\int \log \|Df^{-k_0} \mid E^c_x\|^{-1} d\nu_i(x) > 0
$$
for some ergodic component $\nu_i$. Since, by Proposition~\ref{p.Gibbsustates},
each ergodic component $\nu_i$ is a Gibbs $u$-state, this completes the proof
of the lemma.
\end{proof}

Let $k_0\ge 1$ be fixed from now on and $\lambda>0$ denote the
expression on the left hand side of \eqref{eq.nue}. Let
$g=f^{k_0}$ and
$$
\Gamma = \{x\in N: \lim_{n\to\infty} \frac 1n \sum_{j=1}^{n} \log
\|Dg^{-1} \mid E^c_{g^{j}(x)}\|^{-1} = \lambda\}
$$
be the set of regular points of $\log \|Dg^{-1} \mid E^c\|$ for
the transformation $g$. By ergodicity, $\nu_*(\Gamma)=1$. A
statement similar to the next corollary was proved by Ruelle,
Wilkinson~\cite{RW01} when the diffeomorphism is $C^{1+\vep}$ and
the center is $1$-dimensional.

\begin{corollary}\label{c.Amie}
There is $n_0\ge 1$ such that $\#\big(\cW^c(w) \cap \Gamma\big) <
n_0$ for every $w\in N$.
\end{corollary}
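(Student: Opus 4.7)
The plan is to obtain Corollary~\ref{c.Amie} as an essentially immediate consequence of Proposition~\ref{p.finiteGamma}, applied to $f$ (not $g$) with the parameters $l = k_0$ and any $c \in (0,\lambda)$; I will fix $c = \lambda/2$ for concreteness. Since $g = f^{k_0}$, the center foliation of $g$ coincides with the center foliation of $f$, so the statement "for every center leaf $S$" in Proposition~\ref{p.finiteGamma} is the same statement whether interpreted with respect to $f$ or with respect to $g$.

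The one thing to verify is the inclusion $\Gamma \subset \Gamma_{\lambda/2,\,k_0}$. For $x\in \Gamma$, the definition gives
\begin{equation*}
\lim_{n\to\infty} \frac{1}{n}\sum_{j=1}^{n} \log \|Dg^{-1}\mid E^c_{g^{j}(x)}\|^{-1} = \lambda,
\end{equation*}
and rewriting $g^j = f^{jk_0}$ and $Dg^{-1} = Df^{-k_0}$ this becomes
\begin{equation*}
\lim_{n\to\infty} \frac{1}{n}\sum_{j=1}^{n} \log \|Df^{-k_0}\mid E^c_{f^{jk_0}(x)}\|^{-1} = \lambda.
\end{equation*}
The left-hand side is the very expression whose $\liminf$ defines $\Gamma_{c,l}$ with $l = k_0$. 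Since the true limit exists and equals $\lambda > \lambda/2$, the $\liminf$ is at least $\lambda/2$, so $x \in \Gamma_{\lambda/2,\,k_0}$ as required.

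Finally, Proposition~\ref{p.finiteGamma}, applied with $c = \lambda/2$ and $l = k_0$, provides an integer $n_0$ (depending only on $\lambda$, $k_0$, and $f$) such that $\#\big(S \cap \Gamma_{\lambda/2,\,k_0}\big) < n_0$ for every center leaf $S$. Taking $S = \cW^c(w)$ and using the inclusion above gives $\#\big(\cW^c(w)\cap \Gamma\big) < n_0$ for every $w\in N$, which is exactly the desired conclusion. There is no real obstacle here: all the work was done in proving Proposition~\ref{p.finiteGamma} (Pliss times combined with uniform lower bounds on backward-contracting disks), and the role of the corollary is simply to transport that statement from the $\liminf$-defined set $\Gamma_{c,l}$ to the genuinely regular set $\Gamma$ of the ergodic measure $\nu_*$.
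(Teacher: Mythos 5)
Your proposal is correct and is exactly the paper's argument: apply Proposition~\ref{p.finiteGamma} with $c=\lambda/2$ and $l=k_0$ and observe that $\Gamma\subset\Gamma_{\lambda/2,k_0}$, which the paper states in one line and you merely spell out (rewriting $Dg^{-1}\mid E^c_{g^j(x)}$ as $Df^{-k_0}\mid E^c_{f^{jk_0}(x)}$ and noting the limit $\lambda$ dominates the $\liminf$ threshold $\lambda/2$). No differences in substance.
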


\begin{proof}
Just use Proposition~\ref{p.finiteGamma} with $c=\lambda/2$ and
$l=k_0$. Clearly, $\Gamma\subset\Gamma_{c,l}$.
\end{proof}

Let $\ell_0$ be any periodic center leaf intersecting $\supp\nu_{*}$
(periodic center leaves are dense in the support, by Proposition~\ref{p.finitenessinleafspace})
and $\kappa \ge 1$ be minimal such that $g^{\kappa}(\ell_0)=\ell_0$.
Since $\nu_*$ is a Gibbs $u$-state and $\Gamma$ has full measure,
$\Leb^u(\cW^u(x)\setminus\Gamma)=0$ for $\nu_*$-almost every $x$,
where $\Leb^u$ denotes the Riemannian volume along strong unstable manifolds.
In particular, the stable set $\cW^s(\ell_0)=\cup_{z\in \ell_0}\cW^s(z)$ must intersect
some strong unstable disk $D^u$ such that $\Leb^u(D^u\setminus\Gamma)=0$.
See Figure~\ref{f.finite1}.

\begin{figure}[h]
\begin{center}
\psfrag{x}{$x$}\psfrag{y}{$y$}\psfrag{u}{$D^u$}
\psfrag{l}{$\ell_0$}\psfrag{cs}{$\cW^{s}(\ell_0)$}
\includegraphics[height=1.5 in]{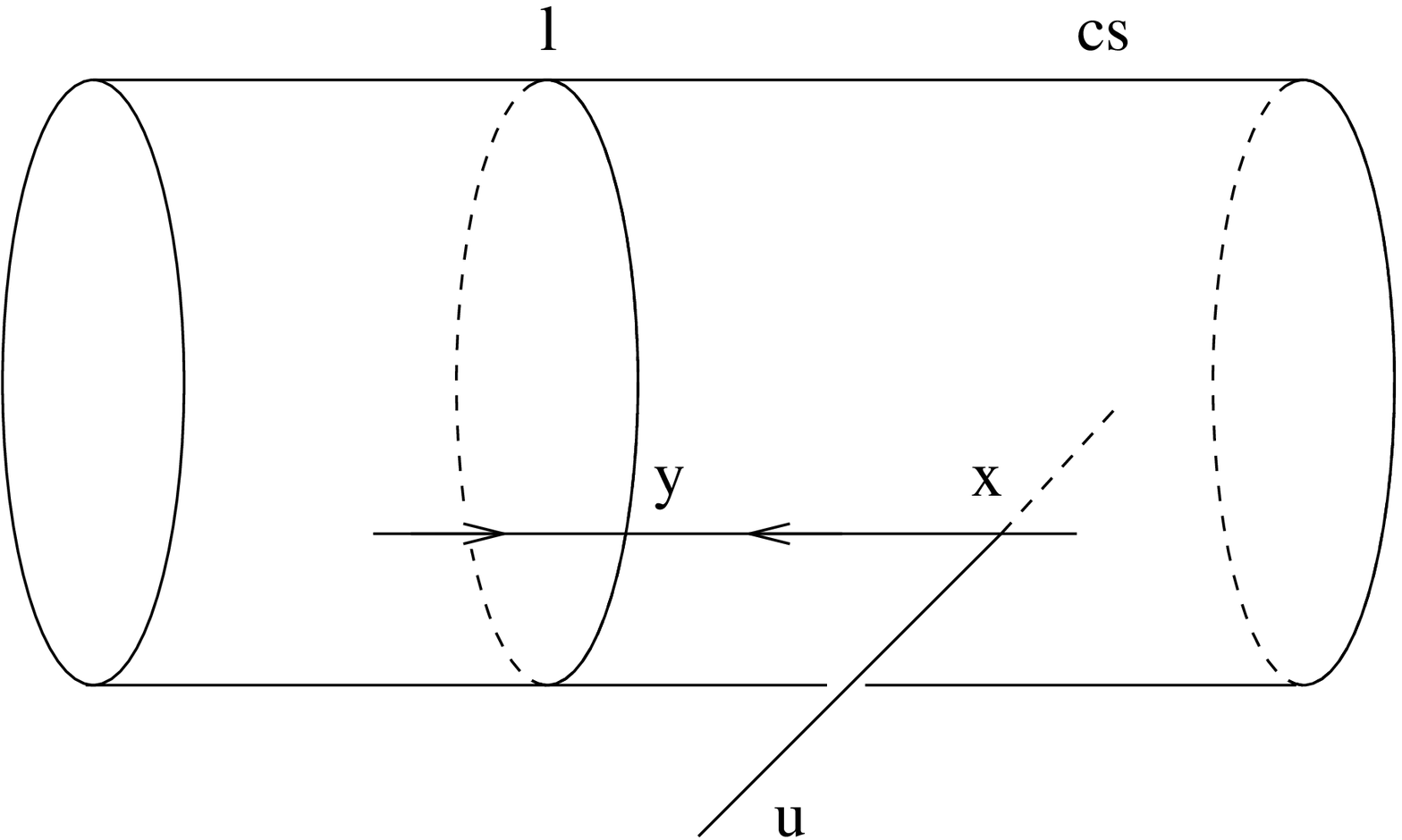}
\caption{\label{f.finite1}}
\end{center}
\end{figure}

\begin{lemma}\label{l.claim1}
Every point $x\in D^u \cap \cW^s(\ell_0)$ belongs to the strong
stable manifold of some periodic point $y\in \ell_0$ of $f$ with
period bounded by $k_0 \kappa n_0$.
\end{lemma}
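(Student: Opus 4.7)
The plan is to reduce the statement to Corollary~\ref{c.Amie}. For $x \in D^u \cap \cW^s(\ell_0)$, strong stable leaves being pairwise disjoint determines a unique $y = y(x) \in \ell_0$ with $x \in \cW^s(y)$. I will argue that $y$ lies in the finite $F$-invariant set $\ell_0 \cap \Gamma$ (where $F = g^\kappa = f^{k_0\kappa}$); once this is shown, Corollary~\ref{c.Amie} gives $\#(\ell_0 \cap \Gamma) < n_0$, so $y$ is $F$-periodic of period at most $n_0-1$ and hence $f$-periodic of period at most $k_0\kappa n_0$.

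The first observation is that $\Gamma$ is saturated by strong stable leaves. Since $f$ is of class $C^k$ with $k>1$, the function $z \mapsto \log\|Dg^{-1}|E^c_z\|^{-1}$ is Hölder continuous on $N$, and for $x \in \cW^s(y)$ the distance $d(g^n(x),g^n(y))$ decays exponentially; the telescoping difference of the $g$-Birkhoff averages at $x$ and at $y$ is therefore summable, so the two averages coincide. In particular, $x \in \Gamma$ if and only if $y \in \Gamma$, and it is enough to show that $x$ itself belongs to $\Gamma$.

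The delicate step is to upgrade the $\Leb^u$-a.e.~hypothesis $\Leb^u(D^u \setminus \Gamma) = 0$ to the pointwise conclusion $x \in \Gamma$, notwithstanding that $D^u \cap \cW^s(\ell_0)$ is only a discrete subset of $D^u$. I would use the absolute continuity of the strong stable foliation: the $\cW^s$-holonomy between a small neighborhood of $x$ in $D^u$ and a neighborhood $V$ of $y$ in $\cW^u_{\loc}(y)$ preserves Lebesgue null sets, and combined with the $\cW^s$-saturation of $\Gamma$ yields $\Leb^u(V \setminus \Gamma) = 0$. Thus $y$ is accumulated by a sequence $y_n \in V \cap \Gamma$; by Proposition~\ref{p.finiteGamma}, each leaf $\cW^c(y_n)$ contains fewer than $n_0$ points of $\Gamma$, one of which is $y_n$, and the fiber-bundle continuity of the center foliation near $\ell_0$ then allows a subsequential compactness argument to place $y$ in $\Gamma \cap \ell_0$. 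The main obstacle in this step is that $\Gamma$ is Borel but not closed, so passing to the limit is not automatic; the crucial input rescuing this is the uniform cardinality bound $n_0$, which prevents the $\Gamma$-points on $\cW^c(y_n)$ from dispersing as $y_n \to y$ and forces the limit to land in $\Gamma \cap \ell_0$ itself, a finite and therefore closed set.
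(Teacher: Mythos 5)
Your reduction breaks down at the decisive step: the claim that $y\in\Gamma$. The hypothesis of the lemma is only that $\Leb^u(D^u\setminus\Gamma)=0$, while $D^u\cap\cW^s(\ell_0)$ is a Lebesgue-null (indeed discrete) subset of $D^u$, so nothing forces the particular point $x$, or equivalently $y$, to lie in $\Gamma$. You see this and try to repair it by transporting full measure to a neighborhood $V\subset\cW^u_{\loc}(y)$ (that part is fine: stable holonomy is absolutely continuous and $\Gamma$ is $\cW^s$-saturated) and then ``passing to the limit'' $y_n\to y$, $y_n\in\Gamma$, arguing that the uniform bound $\#(\cW^c(\cdot)\cap\Gamma)<n_0$ from Corollary~\ref{c.Amie} forces the limit into $\Gamma\cap\ell_0$. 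This is a non sequitur: a bound on the cardinality of $\Gamma$ on each center leaf gives no closedness or semicontinuity whatsoever for $\Gamma$ (compare the graph of a wildly discontinuous function meeting each vertical line exactly once: limits of its points along converging verticals need not lie on the graph). Birkhoff averages are not continuous, $\Gamma$ is merely Borel, and points of $\cW^u_{\loc}(y)$ have forward orbits that separate from that of $y$, so membership of the $y_n$ in $\Gamma$ transfers no information to $y$. In fact $\Gamma\cap\ell_0$ could perfectly well be empty, so an argument that tries to place $y$ inside it cannot work as stated; and without $y\in\Gamma$ your period bound $k_0\kappa n_0$ has no source.

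The paper's proof avoids any claim that a specific point of $\ell_0$ lies in $\Gamma$. It argues by contradiction on the orbit structure of $y$ under $g_0=g^\kappa\mid\ell_0$: if that orbit were infinite, one takes $n_0+1$ distinct iterates $y_j$ accumulating on some $y^*\in\omega(y)$, uniform-size unstable disks $D^u_j$ around the corresponding iterates $x_j$ of $x$ (each with $\Leb^u(D^u_j\setminus\Gamma)=0$ by invariance of $\Gamma$), and pushes each $D^u_j\cap\Gamma$ by the \emph{center stable} holonomy onto a fixed local unstable leaf through $y^*$; absolute continuity of $\cW^{cs}$ makes each image of full measure, so their intersection contains a point $w$, and the strong stable saturation of $\Gamma$ then produces more than $n_0$ distinct points of $\Gamma$ on the single nearby center leaf $\cW^c(w)$ (not on $\ell_0$), contradicting Corollary~\ref{c.Amie}. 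A separate, analogous counting argument bounds the period by $n_0$ once the orbit is known to be finite. So the counting bound is used to exclude long pieces of orbit on $\ell_0$, via auxiliary leaves $\cW^c(w)$ carrying genuinely full-measure data, rather than to upgrade an almost-everywhere statement to the exceptional point $x$. To fix your write-up you would have to abandon the goal $x\in\Gamma$ and rebuild the argument along these lines.
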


\begin{proof}
Let $y\in \ell_0$ be such that $x\in\cW^s(y)$ and let
$g_{0}=g^\kappa\mid \ell_0$. Suppose first that the orbit of $y$
under $g_{0}$ is infinite. We refer the reader to Figure~\ref{f.finite2}.
Fix $y^*\in \omega(y)$ and let $(y_j)_j$
be an injective sequence of iterates of $y$ converging to $y^*$.
Let $(x_j)_j$ be a sequence of iterates of $x$ with $x_j\in\cW^s(y_j)$
and $d(x_j,y_j) \to 0$. Choose disks $D^u_j$
around the $x_j$ inside the forward iterates of $D^u$, small but
with \emph{uniform size}. Since $\Gamma$ is an invariant set,
$m^u(D_j^u\setminus\Gamma)=0$ for every $j$. For every large $j$,
the center leaves $\cW^c(x_j)$ is close to $\ell_0$ and so one can
define a $cs$-holonomy map $\pi^{cs}$ from $D^u_j$ to the local
strong unstable leaf through $y^*$. Since $\cW^{cs}$ is absolutely
continuous, the image of every $D_j^u\cap\Gamma$ is a full volume
measure subset of a neighborhood of $y^*$ inside $\cW^u(y^*)$, where
these neighborhoods also have uniform size for all large $j$. Let
$J=\{j_0, j_0+1, \ldots, j_0+n_0\}$ where $j_0$ is some large
integer and $n_0$ is as in Corollary~\ref{c.Amie}. On the one
hand, it follows from the previous considerations that
$$
\Gamma^*=\bigcap_{j\in J} \pi^{cs}(D_j^u\cap\Gamma)
$$
is a full volume measure subset of some neighborhood of $y^*$
inside $\cW^u(y^*)$. Fix some $w\in \Gamma^*$ close to $y^*$.
For each $j\in J$, let $w_j \in D_j^u \cap \Gamma$ be such that
$\pi^{cs}(w_j)=w$. Moreover, let $z_j$ be the point where the
local strong stable manifold of $w_j$ intersects
$\cW^{cu}(y^*)=\cW^{cu}(w)$. It is clear from the definition that
$w_j\in\cW^{cs}(w)$ and so $z_j\in\cW^c(w)$ for all $j \in J$.
Moreover, by choosing $w$ close enough to $y^*$ we can ensure that
$w_j$ is close to $x_j$ for every $j\in J$ and so $z_j$ is close
to $y_j$ for all $j\in J$. The latter implies that the $z_j$ are
all distinct. Observe also that $z_j\in\Gamma$ for all $j\in J$,
because $\Gamma$ is (clearly) saturated by strong stable leaves.
This proves that $\#(\cW^c(w) \cap \Gamma) \ge \# J
> n_0$, in contradiction with Corollary~\ref{c.Amie}. This
contradiction proves that the $g_{0}$-orbit of $y$ can not be
infinite.

\begin{figure}[h]
\begin{center}
\psfrag{xj}{$x_j$}\psfrag{yj}{$y_j$}\psfrag{u}{$D^u_j$}
\psfrag{cu}{$\cW^{cu}$}\psfrag{cs}{$\cW^{cs}$}\psfrag{w}{$w$}
\psfrag{zj}{$z_j$}\psfrag{wj}{$w_j$}\psfrag{y}{$y^*$}
\psfrag{uu}{$\cW^u$}
\includegraphics[height=1.8 in]{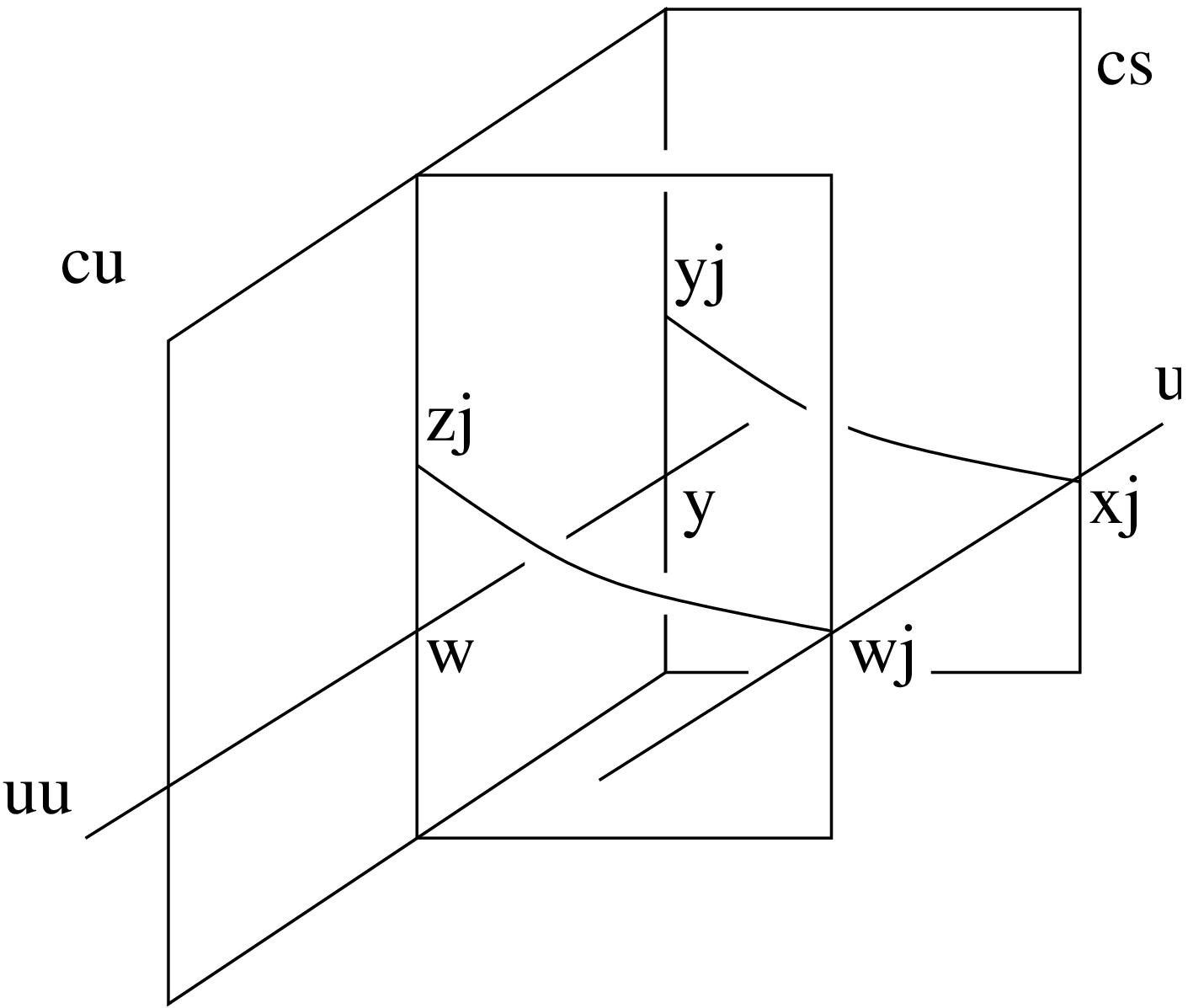}
\caption{\label{f.finite2}}
\end{center}
\end{figure}

Similar arguments handle the case when $y$ is a periodic point for
$g_0$. Let $k\ge 1$ be the (minimal) period of $y$ for $g_{0}$.
Forward iterates of $D^u$ accumulate on the strong unstable
manifolds of the iterates of $y$. Using, in much the same way as
before, that the center stable foliation is absolutely continuous
and $\Gamma$ is saturated by strong stable leaves, we find
$w\in\cW^{cu}(y)$ arbitrarily close to $y$ whose center leaf
$\cW^{c}_{w}$ intersects $\Gamma$ at points close to each of the
$k$ iterates of $y$. In view of Corollary~\ref{c.Amie} this
implies that $k < n_{0}$. This means that the period of $y$ for
$f$ is less than $k_0\kappa n_0$ as stated. The proof of
Lemma~\ref{l.claim1} is complete.
\end{proof}

\begin{lemma}\label{l.claim2}
Every point $z \in \ell_0$ is periodic for $f$, with period bounded
by $k_0\kappa n_0$.
\end{lemma}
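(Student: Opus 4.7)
The plan is to deduce Lemma~\ref{l.claim2} from a density statement. Let
\[
P := \bigcup_{p=1}^{k_0\kappa n_0}\mathrm{Fix}(f^p|_{\ell_0}) \subseteq \ell_0.
\]
Each $\mathrm{Fix}(f^p|_{\ell_0})$ is closed, so $P$ is closed as a finite union of closed sets; therefore $P=\ell_0$ as soon as $P$ is dense in $\ell_0$. I will establish density.

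Fix $z\in\ell_0$ and $\varepsilon>0$; I aim to find $\tilde z\in P$ with $d(\tilde z,z)<\varepsilon$. The key observation is that the proof of Lemma~\ref{l.claim1} really establishes the following general statement: whenever $\tilde D$ is a strong unstable disk with $\Leb^u(\tilde D\setminus\Gamma)=0$ that contains a point $\tilde x\in\cW^s(\tilde z)$ for some $\tilde z\in\ell_0$, then $\tilde z$ is $f$-periodic with period at most $k_0\kappa n_0$, i.e.\ $\tilde z\in P$. Indeed, the forward-iteration, $\omega$-limit, $\cW^{cs}$-holonomy, and Corollary~\ref{c.Amie} steps in the proof of Lemma~\ref{l.claim1} depend only on those two properties of the disk, not on its specific location. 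So it suffices to produce such a pair $(\tilde D,\tilde x)$ with $d(\tilde z,z)<\varepsilon$.

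I build $(\tilde D,\tilde x)$ by transporting the pair $(D^u,x_0)$ from the proof of Lemma~\ref{l.claim1} along a $us$-path joining $y_0$ to $z$, which exists by accessibility on $\Lambda$ since $y_0,z\in\cW^c(\Lambda)$. Along a $\cW^s$-leg $q_i\to q_{i+1}$ the strong stable leaf is unchanged, so I may retain the point $x$ and the disk. Along a $\cW^u$-leg $q_i\to q_{i+1}$ I move $x$ to $x':=\cW^u(x)\cap\cW^s(q_{i+1})$, which stays on the same strong unstable leaf $\cW^u(x)$, and I take the new disk to be a small local $\cW^u$-disk at $x'$. The full-$\Gamma$-measure property propagates because $\Gamma$ is both $g$-invariant and $\cW^s$-saturated --- two points on the same strong stable leaf have shadowing forward orbits and $\log\|Dg^{-1}\!\mid\!E^c\|^{-1}$ is continuous, so their Cesàro averages coincide --- and because the Gibbs $u$-state conditionals of $\nu_*$ along strong unstable leaves are absolutely continuous with positive bounded densities (Proposition~\ref{p.Gibbsustates}). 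Keeping the legs of the $us$-path short, which is uniformly possible on the compact set $\cW^c(\Lambda)$, places the terminal disk $\tilde D$ close to $z$; then transversality of $\cW^u$ to $\cW^{cs}(\ell_0)$ together with the local product structure of $\cW^{cs}(\ell_0)$ near $\ell_0$ yields $\tilde x\in\tilde D\cap\cW^s(\tilde z)$ with $\tilde z\in\ell_0$ arbitrarily close to $z$.

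The main obstacle is controlling the full-$\Gamma$-measure property across a $\cW^u$-leg: the new base point $x'$ lies on the same strong unstable leaf as the old one but potentially outside any pre-existing region of full $\Gamma$-measure. Overcoming this requires a density-point argument, using that the set of points $y$ of $\cW^u(x_0)$ whose local unstable disks meet $\Gamma$ in a full $\Leb^u$-measure set is itself of full $\Leb^u$-measure in $\cW^u(x_0)$; this follows from the Gibbs $u$-state property combined with $f$-invariance and the $\cW^s$-saturation of $\Gamma$. Once this propagation step is secured, the conclusion is a direct reprise of the argument in Lemma~\ref{l.claim1}.
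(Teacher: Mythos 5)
Your reduction to density (the set of points of $\ell_0$ that are periodic with period at most $k_0\kappa n_0$ is closed, so it suffices to show it is dense), and your observation that the proof of Lemma~\ref{l.claim1} applies verbatim to any strong unstable disk of full $\Gamma$-measure meeting $\cW^s(\ell_0)$, are both consistent with what the paper does. But the heart of your argument --- the transport of the pair $(D^u,x_0)$ along the $su$-path --- has a genuine gap at every $u$-leg. You set $x':=\cW^u(x)\cap\cW^s(q_{i+1})$, but there is no reason for this intersection to be nonempty: $\cW^u(x)$ and $\cW^s(q_{i+1})$ have dimensions $d_u$ and $d_s$ with $d_u+d_s=\dim N-d_c<\dim N$. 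Even when the projections to the leaf space $\Nc$ do intersect (at the bracket $[\pi_c(x),\pi_c(q_{i+1})]$), each of the two strong leaves meets the corresponding center leaf in exactly one point, and these two points generally differ by a nontrivial center displacement. The coincidence you need is exactly the kind of rigidity that fails for accessible systems --- the holonomy mismatch along $su$-loops is the very mechanism producing accessibility --- so in the setting of Proposition~\ref{p.somenegative} this step cannot be carried out. (By contrast, the obstacle you single out, propagating the full-$\Gamma$-measure property along the \emph{same} unstable leaf, is not the real difficulty: since $\nu_*(\Gamma)=1$ and $\nu_*$ is a Gibbs $u$-state, one may choose the leaf so that $\Leb^u(\cW^u(x)\setminus\Gamma)=0$ globally from the start.)

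The paper avoids this problem by never transporting the disk statically. It first uses the density of periodic center leaves (Proposition~\ref{p.finitenessinleafspace}) to replace the $su$-path by a nearby one whose corner points $\tilde b_k$ lie on periodic center leaves, ending on $\ell_0$ itself. Then it propagates dynamically: since $D^u$ meets $\cW^s(\tilde b_0)$ with $\tilde b_0=y$ periodic, forward iterates $f^n(D^u)$ accumulate on $\cW^u(\tilde b_0)$, hence reach a neighborhood of $\tilde a_1$; these iterates (which still carry full $\Gamma$-measure, by invariance of $\Gamma$) intersect $\cW^s(w)$ for points $w\in\ell_1$ arbitrarily close to $\tilde b_1$, and the Claim~1 argument makes each such $w$ periodic with uniformly bounded period, hence $\tilde b_1$ is periodic; periodicity of $\tilde b_1$ is then what guarantees that further iterates of $D^u$ accumulate on $\cW^u(\tilde b_1)$, so the induction can continue along the path. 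Your proposal uses neither the periodic center leaves nor this forward-iteration mechanism, and without them the disk cannot be brought close to $z$ while retaining the properties needed to invoke the Claim~1 argument; the final transversality step is fine, but it has no valid input.
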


\begin{proof}
Let $y\in \ell_0$ be a periodic point as in Lemma~\ref{l.claim1} and
let  $z\in \ell_0$ be arbitrary. Choose $y' \in\cW^u(y)\cap
\cW^c(\Lambda_i)\setminus \ell_0$ and $z' \in \cW^s(z)\cap
\cW^c(\Lambda_i)\setminus \ell_0$. By accessibility, there exists
some $su$-path connecting $y'$ to $z'$ or, in other words, there
exist points
$$
b_0 = y, a_1 = y', b_1, \ldots, a_i, b_i, \ldots, a_s= z', b_s=z
$$
which belong to $\cW^c(\Lambda_i)$ such that $a_j$ and $b_j$
belong to the same strong stable manifold and $b_j$ and $a_{j+1}$
belong to the same strong unstable manifold. We are going to find
an (arbitrarily) nearby $su$-path
\begin{equation}\label{eq.newsupath}
\tilde b_0=y, \tilde a_1, \tilde b_1, \ldots, a_i, b_i, \ldots, \tilde a_s, \tilde b_s
\end{equation}
with $\tilde b_s \in \ell_0$ and such that every $\tilde b_i$ belongs
to some periodic center leaf in $\cW^c(\Lambda_i)$. The first step
is to observe that, since periodic leaves are dense, one may
always find periodic leaves $\ell_1, \ldots, \ell_{s-1}$ arbitrarily
close to $\cW^c({b_1}), \ldots, \cW^c({b_{s-1}})$, respectively.
Let $\ell_s=\ell_0$. Assume $\tilde b_0, \tilde a_1, \ldots, \tilde b_k$
have been defined, for some $0 \le k < s$. Since $\cW^u(b_k)$
intersects the stable set of $\cW^c(b_{k+1})$ transversely at
$a_{k+1}$, and stable and unstable sets vary continuously with the
base point, we can find $\tilde b_{k+1}\in \ell_{k+1}$ close to
$b_{k+1}$ such that $\cW^u(\tilde b_k)$ intersects $\cW^s(\tilde
b_{k+1})$ at some point $\tilde a_{k+1}$ close to $a_k$. Repeating
this procedure $s$ times, we obtain an $su$-path as in
\eqref{eq.newsupath}.

The next step is to prove that the points $\tilde b_i$ themselves
are periodic. Recall that $\tilde b_0=y$ is taken to be periodic
and $D^u$ intersects $\cW^s(\tilde b_0)$. So, the iterates
accumulate on $\cW^u(\tilde b_0)$ and, in particular, on $\tilde
a_1$. This implies there exist points $w\in \ell_1$ arbitrarily close
to $\tilde b_1$ whose strong stable manifold intersects $f^n(D^u)$
for some $n$. Since $\Gamma$ has full volume inside every
$f^n(D^u)$, we may use Lemma~\ref{l.claim1} to conclude that $w$
is periodic, with period uniformly bounded. Consequently, $\tilde
b_1$ itself is periodic. It also follows that the iterates of
$D^u$ accumulate on $\cW^u(\tilde b_1)$. This means we may now
repeat the construction with $\tilde b_1$ in the place of $\tilde
b_0$ and conclude that $\tilde b_2$ is periodic. After $s$ steps
we conclude that $\tilde z=\tilde b_{s}$ is periodic. Since
$\tilde z$ is arbitrarily close to $z$, and all the periods are
bounded, we get that $z$ itself is periodic. This completes the
proof of the lemma.
\end{proof}

In particular, Lemma~\ref{l.claim2} implies that no periodic point
on the support of $\nu_*$ is hyperbolic. This is a contradiction
since, by a classical result of Katok~\cite{Ka80}, the support of
any hyperbolic measure contains hyperbolic periodic points. This
completes the proof of Proposition~\ref{p.somenegative}.
\end{proof}

\section{Mostly contracting center}\label{s.mostlycontractedcenter}

In this section we prove some useful facts about partially hyperbolic diffeomorphisms
with mostly contracting center direction. We call \emph{$\cW^u$-disk} any image of a
ball in $E^u$ embedded inside some strong unstable leaf.

\begin{lemma}\label{l.characterization}
The center direction of $f$ is mostly contracting if and only if the center Lyapunov
exponents of all ergodic Gibbs $u$-states are negative.

If $f\in \cP_1^k(N)$, $k>1$ and $\Lambda$ is an attractor of $\fc$, then
the center direction of $f\mid {\cW^c(\Lambda)}$ is mostly contracting if and only if
the center Lyapunov exponent is negative for every ergodic Gibbs $u$-state supported
in $\cW^c(\Lambda)$.
\end{lemma}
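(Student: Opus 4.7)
I would prove both equivalences in parallel, since the statement about the attractor $\Lambda$ is just the localization of the global one to Gibbs $u$-states and strong unstable disks contained in $\cW^c(\Lambda)$. For clarity I describe the $\dim E^c = 1$ setting relevant to the second statement; then $\phi := \log\|Df \mid E^c\|$ is a continuous function on $N$ and $\int\phi\,d\mu = \lambda^c(\mu)$ for every ergodic $\mu$. The higher-dimensional case of the first statement is similar, using $(1/N)\log\|Df^N\mid E^c\|$ and Kingman's subadditive ergodic theorem to compare the top center Lyapunov exponent with this integral.

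For the necessity direction, let $\mu$ be any ergodic Gibbs $u$-state. Its support is $\cW^u$-saturated by Proposition~\ref{p.Gibbsustates}(2), so it contains a strong unstable disk $D$. The set
\[
E = \{x : \limsup_n (1/n)\log\|Df^n \mid E^c_x\| < 0\}
\]
is $f$-invariant, and mostly contracting gives $\Leb^u(D\cap E) > 0$. In a foliated box meeting $\supp\mu$, the Gibbs disintegration~\eqref{eq.Gibbsu} together with the positivity of the unstable densities on $\supp\mu$ (Proposition~\ref{p.Gibbsustates}(1)) forces $\mu(E) > 0$, hence $\mu(E) = 1$ by ergodicity. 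Oseledets' theorem then identifies the $\mu$-a.e.\ value of $\lim_n (1/n)\log\|Df^n\mid E^c_x\|$ with the top center Lyapunov exponent of $\mu$, which is therefore strictly negative; in particular all center exponents of $\mu$ are negative.

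For the sufficiency direction, Proposition~\ref{p.Gibbsustates}(3) extends the hypothesis from ergodic to arbitrary Gibbs $u$-states, giving $\int \phi\,d\mu < 0$ for every Gibbs $u$-state $\mu$. Since the set $G$ of Gibbs $u$-states (respectively of those supported in $\cW^c(\Lambda)$) is weak-$*$ compact and $\mu \mapsto \int \phi\,d\mu$ is continuous, there exists $c > 0$ such that $\int \phi\,d\mu \leq -c$ for every $\mu \in G$. Now fix a $\cW^u$-disk $D$ (inside $\cW^c(\Lambda)$ in the second case). A standard fact from the Pesin--Sinai and Bonatti--Viana theory of Gibbs $u$-states (see~\cite[Ch.~11]{Beyond}) asserts that for $\Leb^u$-almost every $x \in D$, every weak-$*$ accumulation point of the pointwise empirical measures $\mu_n(x) = (1/n)\sum_{j=0}^{n-1}\delta_{f^j(x)}$ is a Gibbs $u$-state. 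For such $x$,
\[
\limsup_n \frac{1}{n}\sum_{j=0}^{n-1}\phi(f^j(x)) = \limsup_n \int \phi\,d\mu_n(x) \le \sup_{\mu\in G}\int\phi\,d\mu \le -c,
\]
and the submultiplicative estimate $(1/n)\log\|Df^n\mid E^c_x\| \le (1/n)\sum_{j<n}\phi(f^j(x))$ yields $\limsup_n(1/n)\log\|Df^n\mid E^c_x\|\le -c < 0$ on a full-$\Leb^u$ subset of $D$, which is even stronger than mostly contracting.

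\emph{Main obstacle.} The crux of the sufficiency direction is the standard but nontrivial assertion that $\Leb^u$-almost every point on a strong unstable disk has its pointwise empirical measures accumulating on Gibbs $u$-states; everything else is compactness and continuity on the space of Gibbs $u$-states combined with Oseledets. One expects to invoke this directly from~\cite[Ch.~11]{Beyond}, or to derive it via Proposition~\ref{p.Gibbsustates}(4) (ergodic Gibbs $u$-states with negative center exponent are physical measures, whose basins have positive $\Leb^u$-measure on unstable plaques meeting their support).
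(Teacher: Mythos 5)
Your necessity direction is fine and is essentially the Bonatti--Viana argument \cite{BoV00} that the paper simply cites: the plaques of a foliated box are themselves disks inside strong unstable leaves, the conditionals of an ergodic Gibbs $u$-state are equivalent to $\Leb^u$ on them, so the invariant set where \eqref{eq.mostlycontracting} holds has positive, hence full, $\mu$-measure, and Oseledets then gives a negative top center exponent.

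The sufficiency direction, however, has a genuine gap exactly at the step you flagged. The fact you invoke --- that for $\Leb^u$-almost every $x$ in an unstable disk $D$ \emph{every} weak$^*$ limit point of the pointwise empirical measures $\frac1n\sum_{j<n}\delta_{f^j(x)}$ is a Gibbs $u$-state --- is not what \cite[Chapter~11]{Beyond} provides. Lemma~11.12 there is the \emph{averaged} statement: Ces\`aro accumulation points of the push-forwards $f^j_*\Leb_D$ are Gibbs $u$-states. The pointwise (``quenched'') version is much stronger and is not known in this generality; historic behavior of individual orbits on unstable disks is perfectly compatible with the averaged statement, and nothing rules out pointwise limit measures with singular conditionals along unstables. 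Your fallback via Proposition~\ref{p.Gibbsustates}(4) does not repair this: knowing that each ergodic Gibbs $u$-state with negative exponent is a physical measure says nothing about whether its basin meets the \emph{given} disk $D$ in a positive $\Leb^u$-measure set (and at this stage finiteness of the ergodic Gibbs $u$-states is not yet available). The paper argues the other way around: by Lemmas~11.12 and~11.13 of \cite{Beyond}, the iterates $f^n(D)$ accumulate on the support of some ergodic Gibbs $u$-state $\nu$; the hypothesis that $\nu$ has negative center exponents yields, by Pesin theory, local stable manifolds of dimension $d_{cs}$ through $\nu$-almost every point, and the absolute continuity of the Pesin stable lamination \cite{Pe76} forces a positive Lebesgue measure subset of some $f^n(D)$ to lie inside these stable disks, along which \eqref{eq.mostlycontracting} holds; pulling back by $f^{-n}$ gives the positive measure subset of $D$ required by the definition of mostly contracting (and the same argument, restricted to disks and Gibbs $u$-states in $\cW^c(\Lambda)$, gives the attractor version). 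If you replace your empirical-measure step by this accumulation-plus-absolute-continuity argument, the uniform constant $c$ from your compactness considerations becomes unnecessary; note also that your claimed conclusion on a \emph{full} $\Leb^u$-measure subset of $D$ is stronger than what the lemma needs and is not what this proof delivers directly.
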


\begin{proof}
Bonatti, Viana~\cite{BoV00} show that if the center direction is mostly contracting then
the center exponents of every ergodic Gibbs $u$-state are negative.
To prove the converse, let $D$ be any disk inside a strong unstable leaf.
By \cite[Lemma~11.12]{Beyond} every Cesaro accumulation point of the iterates of Lebesgue
measure on $D$ is a Gibbs $u$-state. By \cite[Lemma~11.13]{Beyond} every ergodic component
of a Gibbs $u$-state is again a Gibbs $u$-state. This implies that the iterates $f^n(D)$
accumulate on the support of some ergodic Gibbs $u$-state $\nu$.
The hypothesis implies that $\nu$-almost every point has a Pesin (local) stable manifold
which is an embedded disk of dimension $d_{cs}$. Using also the absolute continuity of
the Pesin stable foliation (Pesin~\cite{Pe76}), we conclude that a positive Lebesgue measure
subset of points in some $f^n(D)$ belong to the union of these $d_s$-disks.
This implies that \eqref{eq.mostlycontracting} holds on a positive Lebesgue measure subset
of $D$, as we wanted to show.

The second part of the lemma follows from similar arguments.
\end{proof}

\subsection{Supports of Gibbs $u$-states}

\begin{lemma}\label{l.disjoint}
If the center direction of $f$ is mostly contracting then the supports
of the ergodic Gibbs $u$-states of $f$ are pairwise disjoint.
\end{lemma}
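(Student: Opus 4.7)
The plan is to argue by contradiction. Suppose $m_1\neq m_2$ are ergodic Gibbs $u$-states with $\supp m_1\cap\supp m_2\neq\emptyset$, and fix $x$ in this intersection. Since the center direction is mostly contracting, Lemma~\ref{l.characterization} gives that each $m_i$ has negative center Lyapunov exponents, and Proposition~\ref{p.Gibbsustates}(4) then says both are physical measures. Since any point's Birkhoff averages converge to at most one probability, the basins $B(m_1)$ and $B(m_2)$ are disjoint. The goal is thus to exhibit a single $\cW^u$-plaque on which $\Leb^u$-almost every point lies in $B(m_1)\cap B(m_2)=\emptyset$, which is the sought-after contradiction.

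The main tool is Pesin theory applied separately to each $m_i$. Since $E^s$ is uniformly contracted and the center Lyapunov exponents are negative, for $m_i$-a.e.\ $y$ there is a Pesin center-stable manifold $W^{cs}_{\mathrm{Pes}}(y)$, tangent to $E^{cs}$, of dimension $d_{cs}$, contained in $B(m_i)$, and varying continuously in $y$ on Pesin blocks. Crucially, these laminations are absolutely continuous, so the Pesin $cs$-holonomy between any two nearby $\cW^u$-plaques (these are transverse to $W^{cs}_{\mathrm{Pes}}$ since $E^u\oplus E^{cs}=TN$) is absolutely continuous.

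I would then fix a foliated box $\Phi : I_1^{d_u}\times I_1^{d_{cs}}\to N$ for $\cW^u$ containing $x$. By the Gibbs property, $m_i = \int \alpha_i(\xi,\zeta)\,d\xi\,dm_i^{cs}(\zeta)$ with $\alpha_i$ positive and bounded. Since $m_i(B(m_i))=1$ and $\alpha_i>0$, for $m_i^{cs}$-a.e.\ $\zeta$ the plaque $P_\zeta=\Phi(I_1^{d_u}\times\{\zeta\})$ satisfies $\Leb^u(P_\zeta\setminus B(m_i))=0$; call such $\zeta$ \emph{good for $m_i$} and denote the collection by $G_i$, so $m_i^{cs}(G_i)=1$.

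The crux is to show $G_1\cap G_2\neq\emptyset$. Pick a Pesin block $\Lambda_i$ for $m_i$ of positive $m_i$-measure whose projection $\pi_c(\Lambda_i)$ (via the projection to the transversal $I_1^{d_{cs}}$) clusters at the image of $x$; this is possible because $x\in\supp m_i$. On $\Lambda_i$ the Pesin $cs$-holonomy between nearby $\cW^u$-plaques is uniformly defined and absolutely continuous. Since $B(m_i)$ is saturated by $W^{cs}_{\mathrm{Pes}}$ through $m_i$-generic points, the Pesin $cs$-holonomy carries a good plaque $P_{\zeta_0}$ (with $\zeta_0$ in the image of $\Lambda_i$, which can be chosen in $G_i$) to plaques that are again good. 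Iterating and using absolute continuity, the set of reachable $\zeta$ exhausts a $\Leb$-full subset of some open neighborhood $V_i$ of the image of $x$ inside $I_1^{d_{cs}}$. Hence $G_i$ contains a $\Leb$-full subset of $V_i$; taking the intersection $V_1\cap V_2$ yields $G_1\cap G_2$ of positive Lebesgue measure, and any $\zeta\in G_1\cap G_2$ gives a plaque $P_\zeta$ lying $\Leb^u$-a.e.\ in $B(m_1)\cap B(m_2)$, the desired contradiction.

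The step I expect to be the main obstacle is the upgrade from ``$G_i$ has full $m_i^{cs}$-measure'' to ``$G_i$ contains a $\Leb$-full subset of an open neighborhood of the image of $x$''. This requires the absolute continuity of the Pesin $cs$-holonomy between strong unstable plaques, together with positivity of Lebesgue mass of the $\pi_c$-projection of Pesin blocks near $x$; both are standard but technical consequences of Pesin absolute continuity.
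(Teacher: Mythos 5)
Your reductions (negative center exponents via Lemma~\ref{l.characterization}, the $m_i$ being physical measures with disjoint basins, Pesin center stable manifolds of dimension $d_{cs}$ contained in the basins, absolute continuity of the Pesin lamination, and the characterization of ``good'' plaques via the Gibbs property) are all sound, and they are the same ingredients the paper uses. The gap is in your crux step: the claim that the Pesin $cs$-holonomy carries a good plaque to plaques that are ``again good'', and that iterating this exhausts a Lebesgue-full subset of an open neighborhood $V_i$ of $\zeta_x$ in the transversal. The holonomy is only defined on the Pesin-block portion of $P_{\zeta_0}$, which is a positive-- but in general not full--$\Leb^u$-measure subset, and absolute continuity only guarantees that its image in a nearby plaque $P_\zeta$ has \emph{positive} $\Leb^u$-measure; it in no way forces $\Leb^u$-almost all of $P_\zeta$ into $B(m_i)$, so $\zeta$ need not belong to $G_i$ and there is nothing to iterate. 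Moreover, fullness of $G_i$ for the quotient measure $m_i^{cs}$ does not convert into fullness for Lebesgue measure on the transversal: the transverse measure of a Gibbs $u$-state is typically singular with respect to Lebesgue, and no step of your argument produces Lebesgue-typical transverse parameters. Note also that the statement you are trying to reach (a Lebesgue-full set of plaques near $x$ each lying $\Leb^u$-a.e.\ in $B(m_i)$) essentially says that $B(m_i)$ has full volume in a neighborhood of $x$; this is far stronger than anything available at this stage and is close to the very conclusion being established, so it cannot be extracted from Pesin absolute continuity alone.

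The repair is to make the argument asymmetric, which is what the paper does: you only need \emph{one} disk that is fully good for one measure, placed against a \emph{positive-measure} Pesin-block subset (with uniform-size stable manifolds contained in the basin) inside a nearby disk attached to the other measure. By absolute continuity of the Pesin lamination, the stable manifolds through the latter set meet the fully good disk in a positive $\Leb^u$-measure set; since that disk lies $\Leb^u$-a.e.\ in the other basin, the two basins intersect and the measures coincide. The paper implements this by introducing an auxiliary ergodic Gibbs $u$-state $\nu$, obtained as an ergodic component of a Cesaro limit of the iterates of Lebesgue measure on a $\cW^u$-disk $D$ through the common support point $z$ (so that $\supp\nu\subset\supp m_1\cap\supp m_2$), choosing a disk $D_\nu\subset\supp\nu$ whose points $\Leb^u$-a.e.\ carry uniform Pesin stable manifolds and lie in $B(\nu)$, and letting fully good disks for $m_1$ (which accumulate on $D_\nu$ because $m_1$ has positive densities along unstable plaques, Proposition~\ref{p.Gibbsustates}) be hit by those stable manifolds; this gives $m_1=\nu$ and, symmetrically, $m_2=\nu$. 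Your proof needs this kind of one-full-one-positive intersection argument in place of the ``good plaques propagate to a Lebesgue-full transversal set'' step.
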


\begin{proof}
Let $m_1$ and $m_2$ be ergodic Gibbs $u$-states of $f$ and suppose $\supp m_1 \cap \supp m_2$
contains some point $z$. Let $D$ be any $\cW^u$-disk around $z$.
Then $D \subset \supp m_1 \cap \supp m_2$, since the supports are $\cW^u$-saturated
(Proposition~\ref{p.Gibbsustates}).
By Lemmas~11.12 and~11.13 in~\cite{Beyond}, every ergodic component $\nu$ of every Cesaro
accumulation point of the iterates of Lebesgue measure on $D$ is an ergodic Gibbs $u$-state.
Clearly, the support of $\nu$ is contained in $\supp  m_1 \cap \supp  m_2$.
By Pesin theory (see \cite{BoV00} for this particular setting) $\nu$-almost
every point has a local stable manifold which is an embedded $d_{cs}$-disk.
Recall (Proposition~\ref{p.Gibbsustates}) that the density of Gibbs $u$-states
along strong unstable leaves is positive and finite. Thus, we may find a $\cW^u$-disk
$D_\nu\subset \supp\nu$ such that every point $x$ in a full Lebesgue measure
subset $D_\nu^*$ has a Pesin stable manifold and belongs to the basin of $\nu$.
Moreover, $D_\nu$ is accumulated by $\cW^u$-disks $D_i\subset\supp m_1$ such
that Lebesgue almost every point is in the basin of $m_1$.
Assuming $D_i$ is close enough to $D_\nu$, it must intersect the union of
the local stable manifolds through the points of $D_\nu^*$ on some positive
Lebesgue measure subset $D_i^*$ (because the Pesin local stable lamination
is absolutely continuous~\cite{Pe76}). Then $D_i^*$ is contained in the
basin of $\nu$, and some full Lebesgue measure subset is contained in the
basin of $m_1$. That implies $m_1=\nu$. Analogously, $m_2=\nu$,
and so the ergodic Gibbs $u$-states $m_1$ and $m_2$ coincide.
That completes the proof of the lemma.
\end{proof}

\begin{remark}\label{r.uniqueustate}
It follows from Proposition~\ref{p.Gibbsustates} and Lemma~\ref{l.disjoint}
that if $f$ has mostly contracting center direction and minimal strong unstable
foliation then it has a unique Gibbs $u$-state.
This was first observed in \cite{BoV00}.
\end{remark}

\begin{proposition}\label{p.minimalufoliation}
Suppose the center direction of $f$ is mostly contracting, and let $m$
be an ergodic Gibbs $u$-state of $f$. Then the support of $m$ has a
finite number of connected components.
Moreover, each connected component $S$ is $\cW^u$-saturated and $\cW^u(x)$
is dense in $S$ for any $x\in S$.
\end{proposition}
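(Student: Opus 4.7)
The plan is to split the argument into three parts: $\cW^u$-saturation of each connected component, finiteness of the number of components, and minimality of the strong unstable foliation on each component.

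The first part is immediate: Proposition~\ref{p.Gibbsustates}(2) gives that $\supp m$ is $\cW^u$-saturated; since every strong unstable leaf is connected, any leaf meeting a connected component $S$ lies entirely in $S$.

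The key technical step that I would establish before the other two parts is the following intermediate claim: for every $\cW^u$-disk $D \subset \supp m$, the Cesaro averages of the normalised Lebesgue measure on $D$ under the forward iterates of $f$ converge weakly to $m$. To prove this one invokes \cite[Lemmas~11.12 and 11.13]{Beyond}: any weak-$*$ accumulation point $\nu$ is a Gibbs $u$-state whose ergodic components are themselves ergodic Gibbs $u$-states, each of which has negative center Lyapunov exponent by Lemma~\ref{l.characterization} and has support inside $\supp m$; Lemma~\ref{l.disjoint} then forces each such ergodic component to coincide with $m$, so $\nu=m$.

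For the finiteness claim, I would invoke Katok's theorem~\cite{Ka80}: since $m$ is ergodic and hyperbolic (stable and unstable exponents being nonzero by partial hyperbolicity, and the center exponent being negative by Lemma~\ref{l.characterization}), the support $\supp m$ contains a hyperbolic periodic point $p$ of some $f$-period $k$. Applying the intermediate claim to a $\cW^u$-disk $D\subset \cW^u(p)$ yields
\[
\supp m \;=\; \overline{\bigcup_{n\ge 0} f^n(D)} \;=\; \bigcup_{i=0}^{k-1}\overline{\cW^u(f^i(p))},
\]
exhibiting $\supp m$ as a finite union of $k$ closed, connected, $\cW^u$-saturated sets; hence it has at most $k$ connected components. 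The diffeomorphism $f$ permutes these components, so each component $S$ has a well-defined period $p_0$ dividing $k$.

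For the minimality statement, fix a component $S$ and set $g=f^{p_0}$. By ergodicity of $m$, the measure $m_S:=m|_S/m(S)$ is $g$-ergodic, is a Gibbs $u$-state with $\supp m_S = S$, and $g$ inherits the mostly contracting property so that Lemma~\ref{l.disjoint} applies to $g$. For any hyperbolic $g$-periodic point $q\in S$ (dense in $S$ by Katok applied to $m_S$), the set $\overline{\cW^u(q)}$ is $g^n$-invariant, where $n$ is the $g$-period of $q$. The $g^n$-ergodic decomposition of $m_S$ then has supports $\overline{\cW^u(q)}, g(\overline{\cW^u(q)}), \dots, g^{n-1}(\overline{\cW^u(q)})$ which, by Lemma~\ref{l.disjoint} applied to $g^n$, are pairwise either disjoint or equal. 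Their union is the connected set $S$, so they must all coincide with $S$, giving $\overline{\cW^u(q)} = S$. To upgrade this from periodic $q$ to arbitrary $x\in S$, I would take, by Zorn's lemma, a minimal $\cW^u$-saturated closed subset $K_0 \subset \overline{\cW^u(x)}$ and argue, combining Lemma~\ref{l.disjoint} for $g$ with the connectedness of $S$, that $K_0 = S$, and hence $\overline{\cW^u(x)} = S$.

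The main obstacle lies in this last step. The delicate point is to rule out the possibility of an infinite $g$-orbit of pairwise-disjoint minimal $\cW^u$-saturated subsets inside $S$, a configuration which is consistent with the ergodicity of $m_S$ alone since such an orbit would have zero $m_S$-measure while being dense in $S$ in closure. Eliminating this scenario requires combining Lemma~\ref{l.disjoint}, the connectedness of $S$, and finer information about how the strong unstable foliation interacts with the mostly contracting center.
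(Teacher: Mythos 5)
Your treatment of saturation and finiteness is sound and essentially the paper's: Katok \cite{Ka80} provides a hyperbolic periodic point $p$ (of stable index $d_{cs}$, since the center exponent of $m$ is negative), and the Cesaro convergence of iterates of Lebesgue measure on $u$-disks, via \cite[Lemmas~11.12--11.13]{Beyond} and Lemma~\ref{l.disjoint}, identifies $\supp m$ with the finite union $\bigcup_i \overline{\cW^u(f^i(p))}$; the paper packages this through the $f^\kappa$-ergodic decomposition $m=l^{-1}\sum f^i_*\tilde m$ and shows each $f^i(\supp\tilde m)$ is exactly one connected component, but your direct bound on the number of components is equivalent. Your argument that $\overline{\cW^u(q)}=S$ for \emph{periodic} $q$ also goes through, again by the same mechanism.

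The genuine gap is exactly where you place it: passing from periodic points to an arbitrary $x\in S$. The Zorn's-lemma/minimal-set route does not close, because Lemma~\ref{l.disjoint} only controls closed $u$-saturated sets that carry Gibbs $u$-states, and $\overline{\cW^u(x)}$ for a non-periodic $x$ need not a priori contain the support of any such measure in a useful way; as you note, an infinite family of disjoint $u$-saturated closed sets inside $S$ is not excluded by ergodicity plus connectedness alone. The paper closes this step with an argument of Bonatti--D\'\i az--Ures \cite{BDU02}, and the missing ingredient is, in effect, your own intermediate claim applied to small $u$-disks around \emph{arbitrary} points of $S$: for each $x\in S$ take a small $\cW^u$-disk $D_x$ around $x$; since $\tilde m$ (the unique ergodic Gibbs $u$-state of $f^\kappa$ supported in $S$, by Lemma~\ref{l.disjoint}) is the unique Cesaro limit of the iterates of $\Leb_{D_x}$ under $f^\kappa$, some iterate $f^{n_x\kappa}(D_x)$ meets the local stable manifold of $p$ transversely --- here it matters that $p$ has stable index $d_{cs}$, so $\hW^s(p)$ is a $d_{cs}$-disk transverse to $u$-disks. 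Hence $\cW^u(x)$ meets $W^s(p)$ transversely, and by continuity of the strong unstable foliation this persists on a neighborhood $V_x$ of $x$ with a uniformly bounded piece of $W^s(p)$; compactness of $S$ then yields one bounded set $U^s\subset W^s(p)$ met transversely by \emph{every} unstable leaf in $S$. Applying this to $f^{-n\kappa}(x)$ and pushing forward, $\cW^u(x)$ meets $f^{n\kappa}(U^s)$, which shrinks to $p$ as $n\to\infty$; therefore $\overline{\cW^u(x)}\supset\overline{\cW^u(p)}=S$. Without this uniform-intersection and backward-iteration argument (or a substitute making essential use of the mostly contracting hypothesis beyond Lemma~\ref{l.disjoint}), the minimality claim for arbitrary $x$ remains unproved, so the proposal as written is incomplete.
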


\begin{proof}
Let $p$ be any periodic point in the support of $m$ with stable index equal to $d_{cs}$
(such periodic points do exist, by Katok~\cite{Ka80}) and let $\kappa$ be its period.
By Proposition~\ref{p.Gibbsustates}, the unstable manifold of every $f^j(p)$
is contained in $\supp m$. We claim that $\cup_{j=1}^{\kappa}\cW^u(f^j(p))$ is dense 
in $\supp m$. To see this, let $D$ be any disk inside $\cW^u(p)$.
Consider the forward iterates of Lebesgue measure on $D$. Using Lemmas~11.12 
and~Lemma~11.13 in \cite{Beyond}, one gets that any ergodic component of any Cesaro
accumulation point of these iterates is an ergodic Gibbs $u$-state $\nu$ supported 
inside the closure of $\cup_{j=1}^{\kappa}\cW^u(f^j(p))$. By Lemma~\ref{l.disjoint},
the Gibbs $u$-states $m$ and $\nu$ must coincide. In particular, $\supp m$ is contained
in the closure of $\cup_{j=1}^{\kappa}\cW^u(f^j(p))$. That proves our claim.

Since $m$ is ergodic for $f$, its ergodic decomposition relative to $f^\kappa$ has the
form $m = l^{-1} \sum_{i=1}^l f^i_*\tilde m$ where $l$ divides $\kappa$ and $\tilde m$
is $f^\kappa$-invariant and ergodic. Then
$$
\supp m=\bigcup_{i=1}^{l}f^i(\supp\tilde m).
$$
We claim that the $f^i(\supp\tilde m)$, $i=1, \dots, l$ are precisely the connected
components of $\supp m$. On the one hand, the previous paragraph gives that
$p\in f^s(\supp\tilde m)$ for some $s$. Replacing either $p$ or $\tilde m$ by an iterate,
we may suppose $s=0$.
Then, by the argument in the previous paragraph applied to $f^\kappa$ (it is clear
from the definition \eqref{eq.mostlycontracting} that if $f$ has mostly contracting
then so does any positive iterate), $\supp\tilde m$ coincides with the closure of
$\cW^u(p)$ and, in particular, it is connected.
On the other hand, Lemma~\ref{l.disjoint} gives that the $f^i(\supp\tilde m)$,
$i=1, \dots, l$ are pairwise disjoint. Since they are closed, it follows that
they are also open in $\supp m$. This proves our claim.

We are left to prove that the strong unstable foliation is minimal in each
connected component $S_i=f^i(\supp\tilde m)$.
This will follow from an argument of Bonatti, D\'{i}az, Ures~\cite{BDU02}:

\begin{lemma}
There is a neighborhood $U^s_i$ of $f^i(p)$ inside $W^s(f^i(p))$ such that every
unstable leaf in $S_i$ has some transverse intersection with $U^s_i$.
\end{lemma}

\begin{proof}
For any $x\in S_i$, let $D_x$ be a small $\cW^u$-disk around $x$.
Since $\tilde m_{j}$ is the unique ergodic $u$-state of $f^{\kappa}$ with
support contained in $S_{j}$. It is also the unique Cesaro accumulation point of the
iterates of $\Leb_{D_x}$ under $f^{\kappa}$. In particular, there is $n_x\ge 1$ such
that $f^{n_x\kappa}(D_x)$ intersects the local stable manifold of $f^i(p)$
transversely. This implies that $D_x$ intersects the global stable manifold of
$f^i(p)$ transversely. Then, by continuity of the strong unstable foliation,
there is a neighborhood $V_x$ of $x$ and a bounded open set
$U_x\subset W_{f^\kappa}^s(f^i(p))$ such that $\cW^u(y)$ intersects $U_x$
transversely for every $y\in V_x$.
The family $\{V_x: x\in S_i\}$ is an open cover of the compact set $S_i$.
Let $\{V_{x_1},\cdots,V_{x_m}\}$ be a finite subcover. Choose $U^s_j$ a bounded
neighborhood of $f^i(p)$ inside $W_{f^\kappa}^s(f^i(p))$ containing $U_{x_j}$
for all $j=1, \dots, m$. It follows from the construction that every strong
unstable leaf contained in $S_i$ intersects $U_i^s$ transversely.
This finishes the proof of the lemma.
\end{proof}

Let us go back to proving Proposition~\ref{p.minimalufoliation}. The lemma
gives that $\cW^u(f^{-n\kappa}(x))$ intersects $U_i^s$ transversely, and so
$\cW^u(x)$ intersects $f^{n\kappa}(U_i^s)$ transversely, for every $x\in S_i$
and every $n\ge 0$. Since $f^{n\kappa}(U_i^s)$ converges to $f^i(p)$ when
$n\to\infty$, it follows that $W^u(f^i(p))$ is contained in the closure
of $W^u(x)$. Hence, $W^u(x)$ is dense in $S_j$, as claimed.
The proof of the proposition is complete.
\end{proof}

\subsection{Bernoulli property}\label{ss.bernoulliproperty}

An invariant ergodic measure $\eta$ of a transformation $g$ is called
\emph{Bernoulli} if $(g,\eta)$ is ergodically conjugate to a Bernoulli shift.

\begin{theorem}\label{t.mainI}
Suppose $f$ is a $C^k$, $k>1$ partially hyperbolic diffeomorphism with
mostly contracting center direction. Then there is $l\ge 1$ and a
$C^k$ neighborhood $\cal U$ of $f$ such that for any $g\in \cal U$,
every ergodic $u$-state of $g^l$ is Bernoulli.
\end{theorem}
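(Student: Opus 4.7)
The plan is to combine the robustness of the mostly-contracting property, the structural description of Gibbs $u$-state supports from Proposition~\ref{p.minimalufoliation}, and Pesin's spectral decomposition for hyperbolic SRB measures. By Andersson's theorem, mostly contracting is $C^k$ open, so there is a $C^k$ neighborhood $\cU_0$ of $f$ on which every $g$ still has mostly contracting center. For any such $g$ and any positive iterate $g^m$, every ergodic Gibbs $u$-state has negative center Lyapunov exponents (Lemma~\ref{l.characterization}); combined with partial hyperbolicity, this makes the measure hyperbolic in the sense of Pesin, so Pesin theory applies.

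Next I would choose $l$. For $f$ itself, Proposition~\ref{p.minimalufoliation} says that the support of each ergodic $u$-state has finitely many connected components, cyclically permuted by $f$, on each of which the strong unstable foliation is minimal; Lemma~\ref{l.disjoint} implies that the ergodic $u$-states themselves are finite in number. Let $l$ be the least common multiple of the component counts over all ergodic $u$-states. A perturbation argument, using upper semicontinuity of the set of ergodic $u$-states and the stability of the hyperbolic periodic orbits that organize their supports (via Katok's closing lemma and Proposition~\ref{p.finitenessinleafspace}), then furnishes a smaller neighborhood $\cU\subset \cU_0$ such that the same $l$ works: for every $g\in\cU$ and every ergodic $u$-state $\tilde m$ of $g^l$, the support $\supp \tilde m$ is connected, $\cW^u$-saturated, and the strong unstable foliation is minimal on it.

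Now fix $g\in\cU$ and such an $\tilde m$. Pesin's spectral theorem for ergodic hyperbolic SRB measures of $C^{1+\alpha}$ diffeomorphisms yields an integer $k\ge 1$ and a decomposition $\tilde m = k^{-1}\sum_{j=0}^{k-1}\tilde m_j$ with the $\tilde m_j$ ergodic for $g^{lk}$, cyclically permuted by $g^l$, and each $(g^{lk},\tilde m_j)$ a Bernoulli system. By Proposition~\ref{p.Gibbsustates}(3) each $\tilde m_j$ is itself a Gibbs $u$-state for $g^{lk}$, so $\supp\tilde m_j$ is $\cW^u$-saturated; Lemma~\ref{l.disjoint} applied to $g^{lk}$ (which also has mostly contracting center) forces these supports to be pairwise disjoint closed sets. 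Their union equals the connected set $\supp\tilde m$, so the only possibility is $k=1$: the measure $\tilde m$ itself is Bernoulli for $g^l$.

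The main obstacle is the perturbation step, namely choosing $l$ uniformly on a $C^k$ neighborhood. Upper semicontinuity of the set of ergodic $u$-states should follow in a fairly standard way from weak$^*$ compactness of Gibbs $u$-states and absolute continuity of the strong unstable holonomies, but uniform control on the number of connected components of each support as $g$ varies is more delicate. It ought to rest on the fact that, by Proposition~\ref{p.finitenessinleafspace}, these components project down to the (cyclic) connected components of attractors of $f_c$, together with the persistence of this combinatorial structure of a hyperbolic homeomorphism under $C^k$ perturbation of $g$.
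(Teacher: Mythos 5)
The core of your argument (Ledrappier/Pesin spectral decomposition of the hyperbolic SRB measure into $k$ cyclically permuted Bernoulli pieces, each of which is again a Gibbs $u$-state, plus disjointness of supports from Lemma~\ref{l.disjoint} and connectedness of $\supp\tilde m$ forcing $k=1$) is a correct variant of what the paper does. But the step you yourself flag as ``the main obstacle'' --- producing one integer $l$ that works on a whole $C^k$ neighborhood --- is precisely the content of the theorem, and neither your choice of $l$ nor your suggested route to uniformity is justified. First, taking $l$ to be the least common multiple of the component counts $l_j$ of the supports of the $u$-states of $f$ is not known to suffice: supports of Gibbs $u$-states are not lower semicontinuous, so for $g$ near $f$ the support of $m_j(g)$ may implode and break into \emph{more} components than $\supp m_j$ had; all one can control is that $l_j(g)$ divides the period of a persistent periodic orbit inside $\supp m_j(g)$, which may be strictly larger than $l_j$. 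If $l_j(g)\nmid l$, the ergodic $u$-states of $g^l$ are carried by several components permuted by $g^l$ and are not even mixing, so the divisibility is genuinely needed, not a technicality. Second, the mechanism you propose for the uniformity --- Proposition~\ref{p.finitenessinleafspace} and persistence of the attractor combinatorics of $f_c$ --- is not available here: Theorem~\ref{t.mainI} assumes only that $f$ is $C^k$ partially hyperbolic with mostly contracting center (no dynamical coherence, no compact center leaves forming a fiber bundle, no absolute continuity of $\cW^{cs}$; think of Ma\~n\'e's example), so there is no leaf space $N/\cW^c$ nor a hyperbolic quotient homeomorphism to appeal to. Even when that structure exists, the number of connected components of $\supp m_j$ is not determined by the components of the base attractor: several $u$-states, and several components of one support, can lie over the same connected attractor component (a period-two structure inside the fibers over a transitive base, say), so persistence of the base combinatorics does not bound $l_j(g)$.

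The paper closes this gap with a different, self-contained device that you should adopt: by Katok's theorem each $\supp m_j$ contains a hyperbolic periodic point $p_j$ of stable index $d_{cs}$; fix these once and for all, and take $l$ to be a common multiple of their periods (not of the $l_j$). For $g$ $C^k$-close to $f$, Andersson gives that $g$ has mostly contracting center and no more ergodic $u$-states than $f$; weak$^*$ accumulation of $u$-states of $g$ on $u$-states of $f$ shows each $m_i(g)$ gives positive mass to a small ball around some $p_j$, hence around $p_j(g)$; since $\supp m_i(g)$ is $\cW^u$-saturated and stable manifolds of the continuations vary continuously, it contains a $\cW^u$-disk meeting $W^s(p_j(g))$ transversely, so $p_j(g)\in\supp m_i(g)$ and therefore $l_i(g)$ divides $\per(p_j(g))=\per(p_j)$, which divides $l$. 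With that divisibility in hand, your connectedness-plus-disjointness argument (or, as in the paper, ergodicity of each component measure for all powers $g^{nl}$ followed by Ornstein--Weiss) does yield the Bernoulli property for every ergodic $u$-state of $g^l$.
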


\begin{proof}
Let $m_1, \dots,  m_u$ be the ergodic Gibbs $u$-states of $f$.
Proposition~\ref{p.minimalufoliation} gives that for each $j=1, \dots, u$ there exists
$l_j \ge 1$ such that the support of $m_j$  has $l_j$ connected components $S_{j,i}$,
$i=1, \dots, l_j$. Moreover, each connected component $S_{j,i}$ carries an ergodic
component $m_{j,i}=f^i_*\tilde m_j$ of the Gibbs $u$-state $m_j$ for the iterate $f^{l_j}$.
Let $l$ be any common multiple of $l_1, \dots, l_u$. Then every $S_{j,i}$ is
fixed under $f^l$. Moreover, every Gibbs $u$-state $m_{j,i}$ is $f^l$-invariant and
$f^{n l}$-ergodic for every $n\ge 1$: otherwise $S_i$ would break into more than one
connected component (cf. the proof of Lemma~\ref{l.disjoint}).
Then, by Ornstein, Weiss~\cite{OW98}, every $m_{i,j}$ is a Bernoulli
measure for $f^l$. We claim that $\{m_{j,i}: 1 \leq j \leq u \text{ and } 1 \leq i \leq l_j\}$
contains all the ergodic $u$-states of $f^{n l}$ for every $n \ge 1$. Indeed, let $m_*$ be any
ergodic $u$-state for $f^{n l}$. Then
$$
m=\frac{1}{nl} \sum_{k=1}^{nl} f_*^k m_*
$$
is a $u$-state for $f$. Let $m=a_1 m_1+\cdots+a_u m_u$ be its ergodic decomposition for $f$
and let $s$ be such that $a_s>0$. Then $\supp m_s \subset \supp m$.
Since $\supp m_s$ is $f$-invariant, it must intersect $\supp m_*$.
Using Lemma~\ref{l.disjoint} for $f^{nl}$ we conclude that $m_*$ must coincide with some
ergodic component of $m_s$ for the iterate $f^{nl}$. In other words, it must coincide with
$m_{s,i}$ for some $i=1, \dots, l_s$, and this proves our claim.

Now we extend these conclusions to any diffeomorphism $g$ in a $C^k$, $k>1$ neighborhood of $f$.
By Andersson~\cite{An10}, any such $g$ has mostly contracting center direction, and so the
previous argument applies to it. However, we must also prove that the integer $l$ can be
taken uniform on a whole neighborhood of $f$. Notice that the only constraint on $l$ was
that it should be a multiple of the periods $l_j$ of the ergodic components $m_j$.
Observe that \cite{An10} also gives that the number of ergodic Gibbs $u$-states does not
exceed the number of ergodic Gibbs $u$-states of $f$. So, we only need to check that the
periods $l_j$ remain uniformly bounded for any $g$ in a neighborhood.
We do this by arguing with periodic points, as follows. Let us fix, once and for all,
$f$-periodic points $p_j$ with stable index $d_{cs}$ in the support of each $m_j$,
$j=1, \dots, u$. The period of each $p_j$ is a (fixed) multiple of $l_j$.
Let $p_j(g)$ be the continuation of these periodic points for some nearby diffeomorphism $g$,
and let $\{m_1(g), \dots, m_s(g)\}$, with $s \le u$ be the ergodic Gibbs $u$-states
of $g$. We claim that every $\supp m_j(g)$, $1\leq j \leq s$ contains some $p_i(g)$,
$1\leq i \leq u$. This can be seen as follows. As observed before, any accumulation point
of Gibbs $u$-states of $g$ when $g\to f$ is a Gibbs $u$-state for $f$. We fix some small
$\vep>0$ and consider the $\vep$-neighborhoods $B(p_j,\vep)$ of the periodic points $p_j$.
Then, for any $g$ close enough to $f$ every ergodic Gibbs $u$-state $m_j(g)$ must give
positive weight to some $B(p_i,\vep)$ and, consequently, also to $B(p_i(g),2\vep)$.
By continuous dependence of stable manifolds of periodic points on the dynamics,
and the fact that the supports of Gibbs $u$-states are $u$-saturated, it follows that
$\supp m_j(g)$ contains some $\cW^u$-disk that intersects $W^s(p_i(g))$ transversely.
Then, the support of $m_j(g)$ must contain $p_i(g)$. This proves our claim.
It follows that the period $l_j(g)$ of each ergodic Gibbs $u$-state of $g$ divides
the period of some $p_i(g)$ which, of course, coincides with the period of $p_i$.
Since the latter have been fixed once and for all, this proves that the $l_j(g)$
are indeed uniformly bounded on a neighborhood of $f$. The proof of the theorem is
complete.
\end{proof}

\subsection{Abundance of mostly contracting center}

We also give a family of new examples of diffeomorphisms with mostly contracting center.

\begin{theorem}\label{t.mainJ}
Suppose $\dim M =3$. The set of ergodic diffeomorphisms such that either $f$ or $f^{-1}$
has mostly contracting center direction is $C^1$ open and dense in the space of $C^k$,
$k>1$ partially hyperbolic volume preserving diffeomorphisms with 1-dimensional center
and some fixed compact center leaf.
\end{theorem}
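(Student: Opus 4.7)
The plan is to combine three $C^1$ open-and-dense conditions (accessibility, ergodicity of volume, and nonzero $\lambda^c(\mathrm{vol})$) with a bridging step from ``ergodic volume with nonzero center exponent'' to ``$f$ or $f^{-1}$ mostly contracting''. For $C^1$ openness, mostly contracting is $C^1$ open by Andersson (cited after \eqref{eq.mostlycontracting}), and ergodicity is $C^1$ open in the one-dimensional center setting: center bunching is automatic, so by Burns--Wilkinson accessibility forces stable ergodicity, and accessibility itself is $C^1$ open in this setting.

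For $C^1$ density, given $f$ I would execute three perturbation moves, each keeping us inside the space since the fixed compact center leaf $\ell$ persists under $C^1$-small perturbations by Hirsch--Pugh--Shub normal hyperbolicity. First, apply Dolgopyat--Wilkinson to approximate $f$ by an accessible diffeomorphism; Burns--Wilkinson then delivers ergodicity of volume. Second, apply a Baraviera--Bonatti style local perturbation supported in a tubular neighborhood of $\ell$: since $\ell$ is tangent to $E^c$, one can locally twist the center Jacobian along $\ell$ to move $\lambda^c(\mathrm{vol})$ off zero, while keeping volume preserved and accessibility intact. Replacing $f$ by $f^{-1}$ if necessary, assume $\lambda^c(\mathrm{vol}) < 0$.

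The bridging step is to show that $f$ is then mostly contracting. By Lemma~\ref{l.characterization} this reduces to proving $\lambda^c(m) < 0$ for every ergodic Gibbs $u$-state $m$. Let $S$ be the invariant full-volume set where $\tfrac{1}{n}\log \|Df^n|E^c_x\|$ converges to $\lambda^c(\mathrm{vol})$. Since $m$ has $\cW^u$-conditionals equivalent to $\Leb^u$ (Proposition~\ref{p.Gibbsustates}), and since absolute continuity of $\cW^u$ plus Fubini gives full $\Leb^u$-measure of $S$ on volume-almost every $\cW^u$-leaf, it suffices to ensure that at least one such ``good'' $\cW^u$-leaf meets $\mathrm{supp}\, m$. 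Then $m(S) > 0$, $m$-ergodicity forces $m(S) = 1$, and hence $\lambda^c(m) = \lambda^c(\mathrm{vol}) < 0$. Equivalently, one may argue that for accessible volume preserving partially hyperbolic diffeomorphisms in this setting volume is the unique ergodic Gibbs $u$-state, from which the conclusion is immediate.

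The main obstacle is precisely this last implication: Birkhoff supplies only volume-a.e.\ pointwise information, while mostly contracting demands positive-measure information on \emph{every} strong unstable disk. Bridging this gap requires structural input from accessibility, specifically some form of minimality or sufficient recurrence of the strong unstable foliation in the ergodic volume preserving setting, in order to route a good $\cW^u$-leaf into the support of an arbitrary ergodic Gibbs $u$-state. Controlling the Baraviera--Bonatti perturbation so that it genuinely shifts $\lambda^c(\mathrm{vol})$ while remaining compatible with the rigidity imposed by the fixed compact center leaf is the secondary technical point.
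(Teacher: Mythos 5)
There is a genuine gap, and you in fact name it yourself: the bridging step from ``volume is ergodic with $\lambda^c(\mathrm{vol})<0$'' to ``every ergodic Gibbs $u$-state has negative center exponent'' is exactly the hard point, and your proposal leaves it open. Full volume of the Birkhoff-generic set $S$ only controls $\Leb^u$-a.e.\ points on \emph{volume}-a.e.\ unstable leaf, whereas an ergodic Gibbs $u$-state $m\neq\Leb$ could a priori be carried by a volume-null, $u$-saturated set whose leaves you cannot reach by the Fubini argument (note also that $S$ is $s$-saturated and the strong stable leaves are $1$-dimensional in a $3$-manifold, so transverse absolute continuity of $\cW^s$ gives zero trace of the bad set on $2$-dimensional $cu$-transversals, which says nothing about its $\Leb^u$-measure on $1$-dimensional unstable plaques). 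Your fallback claim that accessibility alone makes volume the unique ergodic Gibbs $u$-state is not justified and is not what the paper does. The paper closes the gap with an extra open-and-dense condition that your plan never produces: after the Baraviera--Bonatti and Dolgopyat--Wilkinson perturbations, it also arranges (on a $C^1$ open and dense set) that \emph{both} the strong stable and strong unstable foliations are minimal, via a conservative version of Bonatti--D\'\i az--Ures using blenders constructed in the volume preserving setting (this is where $\dim =3$ and the fixed compact center leaf really enter, and your proposal uses neither). With minimality, the support of any Gibbs $u$-state is all of $N$, and the Pesin-stable-manifold argument behind Remark~\ref{r.uniqueustate} and Lemma~\ref{l.disjoint} forces the Gibbs $u$-state to be unique, namely $\Leb$ (for $f$ or for $f^{-1}$, whichever has negative exponent); Lemma~\ref{l.characterization} then gives mostly contracting.

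Two smaller points. First, your $C^1$ openness argument leans on Andersson, but the paper only records that mostly contracting is a $C^k$, $k>1$ open property; the $C^1$ openness in Theorem~\ref{t.mainJ} comes instead from the $C^1$ openness of the three defining conditions (minimality of the strong foliations, accessibility, $\lambda^c\neq 0$), the property then being verified on every $C^k$ diffeomorphism in that $C^1$ open set. Second, your density steps (Dolgopyat--Wilkinson, Burns--Wilkinson type ergodicity, Baraviera--Bonatti) coincide with the paper's $U_2$ and $U_3$, so the structure is right up to the missing minimality ingredient $U_1$; without it the proof does not close.
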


\begin{proof}
Denote by $\cPO_m^k$ the set of $C^k$ volume preserving partially hyperbolic diffeomorphisms
with 1-dimensional center and some fixed compact center leaf.
This is a $C^1$ open set, cf. \cite[Theorem~4.1]{HPS77}.
Moreover, the diffeomorphisms such that both the strong stable foliation and the strong
unstable foliation is minimal fill an open and dense subset $U_1$ of $\cPO_m^1$.
This follows from a conservative version of the results of \cite{BDU02}: one only has to
observe that blenders, that they use for the proof in the dissipative context, can be
constructed also in the conservative setting, as shown by \cite{HHTU10}.
By \cite{BB03}, there is an open and dense subset $U_2$ for which the center
Lyapunov exponent
$$
\int \log|Df\mid {E^c(x)}|dm(x) \neq 0.
$$
Furthermore, by \cite{DW03}, there is an open and dense subset $U_3$ of $\cPO^1_m$
consisting of accessible diffeomorphisms. Let $U=U_1 \cap U_2 \cap U_3$.
Before proceeding, let us recall that $C^\infty$ are $C^1$ dense in the space of
volume preserving diffeomorphisms, by \cite{Av09-dens}. In particular, the $C^1$ open
and dense subset $U$ has non-trivial intersection with the space of $C^k$
diffeomorphisms, for any $k>1$.

We claim that for every $C^k$, $k>1$ diffeomorphism $f$ in $U$,
either $f$ or its inverse has a unique ergodic Gibbs $u$-state and the
corresponding center Lyapunov exponent is negative.
In particular, by Lemma~\ref{l.characterization}, either $f$ or its inverse has mostly
contracting center direction. The first step is to note that $f$ is ergodic, since it is
accessible (see~\cite{BW08a,HHU07p,PS97}). Then the Lebesgue measure $\Leb$ is an ergodic
Gibbs $u$-state for both $f$ and $f^{-1}$. Since the strong stable and strong unstable
foliations are minimal, the Gibbs $u$-state is unique; see Remark~\ref{r.uniqueustate}.
This completes the proof of Theorem~\ref{t.mainJ}.
\end{proof}

%
%

\section{Finiteness and stability of physical measures}\label{s.finitenessandstability}

In this section we prove Theorem~\ref{t.mainB}. As remarked before, Theorem~\ref{t.main1}
is a particular case.
We begin by recalling certain ideas from Bonatti, Gomez-Mont, Viana~\cite{BGV03} and
Avila, Viana~\cite{AV3} that we use for handling the case when the
center Lyapunov exponent vanishes.

\subsection{Smooth cocycles}\label{ss.smoothcocycles}

By assumption, the center leaves of $f$ define a
fiber bundle $\pi_c:N\to\Nc$ over the leaf space. Then $f$ may be
seen as a smooth cocycle (as defined in \cite{AV3}) over $\fc$:
$$
\begin{array}{rrcl}
f : & N & \rightarrow & N \\
 & \downarrow & & \downarrow \\
\fc: & \Nc & \rightarrow & \Nc
\end{array}
$$
It follows from the form of our maps that the strong stable manifold $\cW^s(x)$ of every
point $x\in M$ is a graph over the stable set $W^s_{\pi_c(x)}$ of $\pi_c(x)\in\Nc$.
For each $\eta \in W^s(\xi)$, the strong stable holonomy defines a homeomorphism
$h^s_{\xi,\eta}:\xi \to \eta$ between the two center leaves.
In fact (see~\cite[Proposition~4.1]{AV3}),
\begin{equation}\label{eq.sholonomy}
h^s_{\xi,\eta}(\theta) = \lim_{n\to\infty} (f^n\mid\eta)^{-1} \circ (f^n \mid \xi)(\theta),
\end{equation}
(for large $n$ one can identify $f^n(\xi)\approx f^n(\eta)$ via the fiber bundle
structure) for each $\theta\in\xi$ and the limit is uniform on the set of all
$(\xi,\eta,\theta)$ with $\theta\in\xi$ and $\xi$ and $\eta$ in the same local stable set.
These \emph{$s$-holonomy} maps satisfy
\begin{itemize}
\item $h^s_{\eta,\zeta} \circ h^s_{\xi,\eta} = h^s_{\xi,\zeta}$ and $h^s_{\xi,\xi}=\id$
\item $f \circ h^s_{\xi,\eta} = h^s_{\fc(\xi),\fc(\eta)} \circ f$
\item $(\xi,\eta,\theta) \mapsto h^s_{\xi,\eta}(\theta)$ is continuous on the set of triples
$(\xi,\eta,\theta)$ with $\xi$ and $\eta$ in the same local stable set and
$\theta\in \cW^c(\xi)$.
\end{itemize}


Let $m$ be any $f$-invariant probability measure and $\mu=(\pi_c)_*(m)$.
A \emph{disintegration} of $m$ into conditional probabilities along the center leaves is a
measurable family $\{m_\xi:\xi\in\supp\mu\}$ of probability measures with $m_{\xi}(\xi)=1$
for $\mu$-almost every $\xi$ and
\begin{equation}\label{eq.dis}
m(E) = \int m_\xi(E)\,d\mu(\xi)
\end{equation}
for every measurable set $E\subset M$. By Rokhlin~\cite{Ro52}, such a family exists
and is essentially unique. A disintegration is called \emph{$s$-invariant} if
$$
(h^s_{\xi,\eta})_* m_\xi = m_\eta \quad\text{for every $\xi$, $\eta \in \supp\mu$ in the same stable set.}
$$
In a dual way one defines \emph{$u$-holonomy} maps and \emph{$u$-invariance}.
We call a disintegration \emph{bi-invariant} if it is both $s$-invariant and $u$-invariant,
and we call it \emph{continuous} if $m_\xi$ varies continuously with $\xi$ on the support of $\mu$,
relative to the weak$^*$ topology.

\begin{proposition}\label{p.continuousdisintegration}
Let $f\in \cP_*^k(N)$, $k>1$ be such that the center stable foliation is absolutely continuous.
Let $m$ be an ergodic Gibbs $u$-state with vanishing center Lyapunov exponents.
Then $m$ admits a disintegration $\{m_\xi:\xi\in\supp\mu\}$ into conditional probabilities along
the center leaves which is continuous and bi-invariant.
\end{proposition}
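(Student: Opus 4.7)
The plan is to treat $f$ as a smooth cocycle over the hyperbolic homeomorphism $\fc$ with center leaves as fibers, in the spirit of Bonatti-Gomez-Mont-Viana and Avila-Viana. The $s$-holonomies between nearby center leaves are already furnished by \eqref{eq.sholonomy}. The first step is to establish the existence of continuous $u$-holonomies between center leaves in the same local unstable set of $\fc$, via the analogous limit
\[
h^u_{\xi,\eta}(\theta) = \lim_{n\to\infty} (f^{-n}\mid\eta)^{-1}\circ (f^{-n}\mid\xi)(\theta),
\]
together with the natural analogues of the three bulleted properties following \eqref{eq.sholonomy}. Convergence will come from the exponential approach of $\xi$ and $\eta$ under $f^{-n}$ combined with the vanishing center Lyapunov exponent of $m$, which prevents derivatives of $f^{-n}$ along $E^c$ from blowing up and makes the composition stabilize; a Pesin-theoretic argument along center-unstable sets on a $\mu$-full measure set should yield the quantitative estimates needed.

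Starting then from any Rokhlin disintegration $\{m_\xi\}$ of $m$ along center leaves, I prove bi-invariance on a $\mu$-full measure set. For $u$-invariance, Proposition~\ref{p.ustates_product} gives that $\mu = (\pi_c)_*m$ has local product structure, so in any foliated box one can combine this with the Gibbs $u$-state property (absolute continuity of $m$ along strong unstables) to write $m$ as a product-like integral over $W^u_\vep(\ell_0) \times W^s_\vep(\ell_0) \times \ell_0$; uniqueness of Rokhlin disintegration then identifies the center-leaf factor with $m_\xi$ and delivers $(h^u_{\xi,\eta})_* m_\xi = m_\eta$ almost everywhere. For $s$-invariance the absolute continuity of $\cW^{cs}$ enters: $s$-holonomy acts within center stable leaves and is absolutely continuous in the relevant sense, so $(h^s_{\xi,\eta})_* m_\xi$ produces another measurable family integrating against $\mu$ to $m$, hence must coincide with $m_\eta$ for $\mu$-almost every $(\xi,\eta)$. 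Continuity on all of $\supp\mu$ is then obtained by using the local product structure: any $\xi\in\supp\mu$ can be written $\xi=[\xi_1,\xi_2]$ with $\xi_1,\xi_2$ in the full measure subset where bi-invariance is already known, and one \emph{defines}
\[
m_\xi = (h^u_{\xi_2,\xi})_*(h^s_{\xi_1,\xi_2})_* m_{\xi_1};
\]
continuity of both holonomies in their triple of arguments forces $\xi\mapsto m_\xi$ to be weak$^*$ continuous, and bi-invariance persists by the cocycle relations of $h^u$ and $h^s$.

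The main obstacle will be constructing the $u$-holonomies in the regime of vanishing, rather than negative, center Lyapunov exponent: one cannot invoke a simple domination along $E^c$ to prove the limit is Cauchy, and instead must extract non-uniform Pesin estimates, then upgrade them to uniform control on $\supp\mu$ using the ergodicity of $m$ and the absolute continuity of $\cW^{cs}$. A secondary subtlety is matching the $u$-holonomies built on a $\mu$-generic set with a continuous extension to all of $\supp\mu$, which is what makes the final bracket-based definition of $m_\xi$ well-posed and independent of the choice of $(\xi_1,\xi_2)$.
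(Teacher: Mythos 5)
There is a genuine gap, and it sits exactly where the hypothesis $\lambda^c(m)=0$ has to do its work. The paper's own proof is a one-line reduction: Proposition~\ref{p.ustates_product} gives local product structure for $\mu=(\pi_c)_*m$, and then Theorem~D of Avila--Viana \cite{AV3} (the invariance principle for smooth cocycles over hyperbolic homeomorphisms) delivers the continuous bi-invariant disintegration. You are in effect trying to reprove that theorem, and both of your invariance steps are unjustified. For $u$-invariance: the Gibbs property only constrains the conditionals of $m$ along strong unstable plaques; it says nothing about how the center-leaf conditionals $m_\xi$, $m_\eta$ at two points of the same unstable set are related, and the identity $m=\int m_\xi\,d\mu(\xi)$ is compatible with completely arbitrary measurable dependence of $m_\xi$ on $\xi$, so ``uniqueness of the Rokhlin disintegration'' cannot produce $(h^u_{\xi,\eta})_*m_\xi=m_\eta$. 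In fact $u$-invariance of the center disintegration of a Gibbs $u$-state is false in general; it is forced here only by the vanishing of the center exponent, through the Ledrappier-type entropy/martingale argument that is the core of \cite{AV3}. The same objection applies to your $s$-invariance step: absolute continuity of $\cW^{cs}$ does not make the pushed-forward family $(h^s_{\xi,\eta})_*m_\xi$ a disintegration of $m$, so there is nothing to compare with $m_\eta$. A sanity check that something must be wrong: your argument never uses $\lambda^c(m)=0$ for the invariance, so it would apply verbatim to an accessible skew-product with mostly contracting center (whose center stable foliation is absolutely continuous); combined with the argument of Proposition~\ref{p.elliptic} it would then force center conditionals equivalent to Lebesgue, contradicting the atomicity of Ruelle--Wilkinson \cite{RW01} in the negative-exponent case.

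Moreover, the one place where you do invoke the zero exponent---convergence of the $u$-holonomies---is not where it is needed. In this geometric setting $h^u_{\xi,\eta}$ exists unconditionally: it is the strong unstable holonomy inside a center unstable leaf, the exact dual of \eqref{eq.sholonomy}, and its convergence comes from domination and dynamical coherence, uniformly, with no Pesin theory. Your final bracket-extension step, defining $m_\xi=(h^u_{\xi_2,\xi})_*(h^s_{\xi_1,\xi_2})_*m_{\xi_1}$ on all of $\supp\mu$ and using continuity of the holonomies, is indeed in the spirit of how \cite{AV3} upgrade essential bi-invariance plus local product structure to a continuous disintegration, but it rests entirely on the bi-invariance you have not established. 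Either quote Theorem~D of \cite{AV3} after Proposition~\ref{p.ustates_product}, as the paper does, or supply the Ledrappier/Avila--Viana invariance argument; the soft measure-theoretic substitutes proposed here do not close the gap.
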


\begin{proof}
Proposition~\ref{p.ustates_product} gives that $(\pi_c)_*m$ has local product structure.
Thus, we are in a position to use Theorem~D of Avila, Viana~\cite{AV3} to obtain the
conclusion of the present proposition.
\end{proof}

\subsection{Zero Lyapunov exponent case}\label{ss.zero}

The following result provides a characterization of the systems exhibiting ergodic Gibbs
$u$-states with vanishing central exponent.

\begin{proposition}\label{p.elliptic}
Let $f\in \cP_1^k(N)$, $k>1$ be such that the center stable foliation is absolutely continuous.
Let $\Lambda$ be an attractor of $\fc$ such that $f$ is accessible on $\Lambda$,
and let $m$ be an ergodic Gibbs $u$-state with vanishing center Lyapunov exponent. Then
\begin{itemize}

\item[(1)] the conditional probabilities $\{m_x: x\in \Lambda\}$ along the center leaves are
equivalent to the Lebesgue measure on the leaves, with densities uniformly bounded from zero
and infinity;


\item[(2)] $\supp m = W^c(\Lambda)$ and this is the unique Gibbs $u$-state supported in $\cW^c(\Lambda)$;

\item[(3)] the basin of $m$ covers a full Lebesgue measure subset of a neighborhood
of $\cW^c(\Lambda)$.
\end{itemize}\end{proposition}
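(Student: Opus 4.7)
The plan is to apply Proposition~\ref{p.continuousdisintegration} to obtain a continuous bi-invariant disintegration $\{m_\xi:\xi\in\Lambda\}$ of $m$ along the center leaves, where $\Lambda=\supp(\pi_c)_*m$, and then to exploit the one-dimensional fiber geometry together with accessibility on $\Lambda$ to pin down $m$ rigidly. First I would prove that $\supp m_\xi=\xi$ for every $\xi\in\Lambda$: the assignment $\xi\mapsto\supp m_\xi$ is continuous and equivariant under both $s$- and $u$-holonomies (by bi-invariance), so for any two points $\theta,\theta'$ on the same leaf $\xi_0\in\Lambda$, an $su$-path from $\theta$ to $\theta'$ with corners in $\cW^c(\Lambda)$ (supplied by accessibility) transports $\supp m_{\xi_0}$ back to itself leg-by-leg and forces $\theta\in\supp m_{\xi_0}\Rightarrow\theta'\in\supp m_{\xi_0}$. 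Hence $\supp m_{\xi_0}=\xi_0$, and globally $\supp m=\cW^c(\Lambda)$.

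Next I would upgrade this to the absolute-continuity claim in (1): each $m_\xi$ is equivalent to arclength on $\xi$, with density uniformly bounded away from $0$ and $\infty$. Formally, any such density $\rho$ must satisfy the cohomological relation $\log\rho(x)-\log\rho(f(x))=\log|Df\mid E^c_x|$, whose coboundary integrability is exactly the vanishing of $\lambda^c(m)$. Using the circle structure of the fibers I would encode the disintegration as the $f$-equivariant metric $d_\xi(a,b)=m_\xi([a,b])$ on each $\xi$, continuous in $\xi$, and then invoke the invariance-principle mechanism from \cite{AV3} underlying Proposition~\ref{p.continuousdisintegration} to compare $d_\xi$ with arclength via $su$-holonomies. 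Compactness of $\Lambda$ and continuity then yield the required uniform bounds.

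For uniqueness in (2), let $m'$ be another ergodic Gibbs $u$-state supported in $\cW^c(\Lambda)$; by Proposition~\ref{p.somenegative} its center exponent is non-positive. If it vanishes, the preceding arguments apply to $m'$ as well, and the Radon--Nikodym ratio $\psi(x)=dm'_{\pi_c(x)}/dm_{\pi_c(x)}$ is continuous, positive, $f$-invariant, and invariant under both holonomies; accessibility then forces $\psi$ to be constant on $\cW^c(\Lambda)$, yielding $m'=m$ after normalization. If the exponent of $m'$ is strictly negative, Katok's theorem produces a hyperbolic periodic point $p\in\supp m'$ with stable index $d_{cs}$; then $\xi_0=\pi_c(p)\in\Lambda$ is a periodic leaf for $\fc$ of some period $\kappa$, the restriction $f^\kappa\mid\xi_0$ is a circle diffeomorphism with a hyperbolic attracting periodic orbit, and $m_{\xi_0}$ is $f^\kappa\mid\xi_0$-invariant by essential uniqueness of disintegration---contradicting (1), since the relation $\rho=(\rho\circ g)\cdot|Dg|$ forces any absolutely continuous invariant density of such a diffeomorphism $g$ to be unbounded near its attracting periodic orbit. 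Thus $m$ is the unique Gibbs $u$-state in $\cW^c(\Lambda)$. Finally, (3) is now standard: for Lebesgue-a.e.\ $x$ in a trapping neighborhood $U$ of $\cW^c(\Lambda)$, Cesaro averages of iterates of Lebesgue measure on a small strong-unstable disk through $x$ accumulate only on Gibbs $u$-states supported in $\cW^c(\Lambda)$ (Lemmas~11.12--11.13 of~\cite{Beyond}), each of which equals $m$ by the uniqueness just established, so Birkhoff averages at $x$ converge to integrals against $m$; absolute continuity of $\cW^u$ extends this from unstable disks to a full Lebesgue subset of $U$.

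The main obstacle is the density-bound portion of (1): extracting uniform positive bounds on the densities of the $m_\xi$ from the single quantitative datum $\lambda^c(m)=0$ together with continuous bi-invariance and accessibility. This is an invariance-principle phenomenon relying essentially on $\dim E^c=1$, since the circle-fiber structure lets us read off an $f$-equivariant metric directly from $m_\xi$. Once (1) is secured, the remaining steps---in particular the Katok-plus-circle-diffeomorphism dichotomy in the uniqueness argument---are comparatively straightforward.
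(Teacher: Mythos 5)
The proposal does not close the two steps that carry the real content. First, claim (1) is not proved. Proposition~\ref{p.continuousdisintegration} only gives you a \emph{continuous, bi-invariant} family $\{m_\xi\}$; nothing in your argument links these conditionals to arclength. The ``cohomological relation'' $\log\rho(x)-\log\rho(f(x))=\log|Df\mid E^c_x|$ presupposes $m_\xi\ll\Leb^c$, which is exactly what must be shown, and $\lambda^c(m)=0$ by itself does not produce a (bounded) solution of that equation; likewise, the equivariant metric $d_\xi(a,b)=m_\xi([a,b])$ merely re-encodes the disintegration (it is the invariant metric of alternative (a) of Theorem~\ref{t.mainB}) and gives no comparison with Lebesgue. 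The missing ingredient, which is the heart of the paper's proof, is that the strong stable and strong unstable holonomies \emph{restricted to center stable and center unstable leaves} are absolutely continuous with uniformly bounded Jacobians, together with the fact that accessibility on $\Lambda$ can be realized by $su$-paths with a uniformly bounded number of legs of uniformly bounded length. Then, for any $\xi,\eta\in\cW^c(\Lambda)$, the holonomy $h:\cW^c(\xi)\to\cW^c(\eta)$ induced by such a path both transports $m_\xi$ to $m_\eta$ (bi-invariance) and distorts $\Leb^c$ by at most a uniform factor $K$, so the ratios $m_\xi(B^c_\vep(\xi))/\Leb^c(B^c_\vep(\xi))$ at corresponding points are comparable up to $K$; since each $m_\xi$ is a probability on a circle leaf of bounded length, some points have this ratio $\le 1$ and others $\ge 1$, forcing uniform two-sided density bounds at \emph{every} point and hence equivalence to $\Leb^c$. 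Without this Jacobian input there is no route from bi-invariance to Lebesgue, and indeed you yourself flag this step as unresolved.

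Second, your argument for (3) is a non sequitur: convergence of Cesaro averages of pushforwards of Lebesgue measure on unstable disks to the unique Gibbs $u$-state does not imply that Lebesgue-a.e.\ point lies in the basin --- weak$^*$ convergence of averaged pushforwards gives no control of individual Birkhoff averages (this is precisely why the mostly contracting theory needs Pesin stable manifolds and their absolute continuity). The paper deduces (3) from (1): there is a full $m$-measure set of center leaves on which $\Leb^c$-a.e.\ point belongs to $B(m)$; one picks a $\cW^u$-disk $D^u_0$ meeting these leaves in full measure, saturates it to a $cu$-disk $D_0^{cu}=\bigcup_{\xi\in D^u_0}\cW^c(\xi)$ using that the center foliation is absolutely continuous inside center unstable leaves (its holonomies coincide with center stable holonomies, assumed absolutely continuous), and finally saturates by local strong stable leaves, using absolute continuity of $\cW^s$ and $s$-saturation of the basin, covering $\cW^c(\Lambda)$ by finitely many such open sets. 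Minor further points: in the uniqueness step you need $(\pi_c)_*m'=(\pi_c)_*m$ from Proposition~\ref{p.finitenessinleafspace}, and the continuity of your Radon--Nikodym ratio $\psi$ is not justified; but once (1) holds for both measures, the simpler observation that both basins contain $\Leb^c$-a.e.\ point of a common center leaf already yields $m'=m$, and your Katok-plus-Morse--Smale exclusion of a negative exponent for $m'$ agrees with the paper.
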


\begin{proof}
By Proposition~\ref{p.continuousdisintegration}, there is a disintegration
$\{m_x: x\in \Lambda\}$ of $m$ along the center foliation which is continuous,
$s$-invariant, and $u$-invariant. Let $\xi$ and $\eta$ be any two points in
$\cW^c(\Lambda)$. By accessibility on $\Lambda$, one can find an $su$-path
$
b_0 = \xi, b_1, \ldots, b_{s-1}, b_s=\eta
$
connecting $\xi$ to $\eta$. This $su$-path induces a holonomy map
$h:\cW^c(\xi)\to\cW^c(\eta)$, defined as the composition of all
strong stable/unstable holonomy maps $h_i: \cW^c(b_{i-1})\to\cW^c(b_{i})$.
The fact that the disintegration is bi-invariant gives, in particular,
that
\begin{equation}\label{eq.inva1}
m_\eta(h(B^c_\vep(\xi)) = m_\xi(B^c_\vep(\xi))
\end{equation}
It is a classical fact that the strong stable and strong unstable foliations are
absolutely continuous in a strong sense: their holonomy maps have bounded
Jacobians. See \cite{BP74,Man88,AbV}. Those arguments extend directly to
their restrictions to each center stable or center unstable leaf, respectively:
the restricted strong stable and strong unstable foliations are also absolutely
continuous with bounded Jacobians.
By compactness, the $su$-path may be chosen such that the number $s$ of legs and
the length of each leg are uniformly bounded, independent of $\xi$ and $\eta$
\footnote{This may be deduced from \cite{ASV} as follows. By Proposition~8.3 in \cite{ASV},
given any $x_0\in M$ there exists $w\in M$ such that $x_0$ is connected to every point
in a neighborhood of $w$ by a uniformly bounded $su$-path. Then the same is true if
one replaces $w$ by an arbitrary point $z\in M$: connect $w$ to $z$ by some $su$-path;
the "same" $su$-path determines a bijection between neighborhoods of $w$ and $z$;
concatenating with $su$-paths from $x_0$ to the neighborhood of $w$ one obtains
uniformly bounded $su$-paths from $x_0$ to any point near $z$. The claim now follows
by compactness of the ambient manifold.}.
Then, we may fix a uniform upper bound constant $K>1$ on the Jacobians of all associated
strong stable and strong unstable holonomies. Notice $\Leb^c(B_r(\zeta))=2r$,
since the center leaves are one-dimensional. Then
\begin{equation}\label{eq.inva2}
 K^{-1}  \Leb^c(B^c_{\vep}(\xi))
 \le \Leb^c(h(B^c_\vep(\xi)))
 \le K  \Leb^c(B^c_{\vep}(\xi)).
\end{equation}
From \eqref{eq.inva1} and \eqref{eq.inva2} we obtain
\begin{equation*}
\frac 1K \frac{m_\xi(B^c_\vep(\xi))}{\Leb^c(B^c_\vep(\xi))}
\le \frac{m_\eta(h(B^c_\vep(\xi)))}{\Leb^c(h(B^c_\vep(\xi)))}
\le K \frac{m_\xi(B^c_\vep(\xi))}{\Leb^c(B^c_\vep(\xi))}.
\end{equation*}
and, taking the limit as $\vep\to 0$,
\begin{equation*}
\frac 1K \frac{dm_\xi}{d\Leb^c}(\xi)
 \le \frac{dm_\eta}{d\Leb^c}(\eta)
\le K\frac{dm_\xi}{d\Leb^c}(\xi).
\end{equation*}
Since we can always find $\eta$ where the density is less
or equal than $1$ (respectively, greater or equal than $1$),
this implies that
\begin{equation}\label{eq.inva4}
\frac{dm_\xi}{d\Leb^c}(\xi) \in \big[K^{-1}, K]
\end{equation}
for every $\xi$, and that proves claim (1).


Now let $m'$ be any other ergodic Gibbs $u$-state supported in $\cW^c(\Lambda)$.
The center Lyapunov exponent of $m'$ must vanish: otherwise, by \cite{Ka80},
there would be some hyperbolic periodic point in $\cW^c(\Lambda)$, and that is
incompatible with the conclusion in part (1) that there exist invariant
conditional probabilities equivalent to Lebesgue measure along the center leaves.
So, all the previous considerations apply to $m'$ as well. In particular,
it has a continuous disintegration $\{m'_x: x\in \Lambda\}$ along the center
foliation such that each $m'_x$ is equivalent with $\Leb^c$. Moreover, by
Proposition~\ref{p.finitenessinleafspace}, $(\pi_c)_*(m)=(\pi_c)_*(m')$.
Then, $\Leb^c$-almost every point in almost every center leaf, relative to
$(\pi_c)_*(m)=(\pi_c)_*(m')$, belongs to the basin of both $m$ and $m'$.
In particular, the two basins intersect, and that implies $m=m'$.
That completes the proof of claim (2).

From conclusion (1) we get that there exists a full $m$-measure set
$\Delta$ consisting of leaves such that $\Leb^c$-almost every point
in the leaf belongs to the basin of $m$. Then, since $m$ is a Gibbs
$u$-state, we can find a  $\cW^u$-disk $D^u_0$ such that $\Delta\cap D^u_0$
has full measure in $D^u_0$. Consider the $cu$-disk
$$
D_0^{cu}=\bigcup_{\xi\in D_0^u}\cW^c(\xi)
$$
Observe that the center foliation $\cW^c$ is absolutely continuous on each
center unstable leaf, because the corresponding holonomy maps between unstable
leaves coincide with the corresponding holonomy maps for the center stable
foliation, which we assume to be absolutely continuous. Using this fact,
and a Fubini argument, we get that Lebesgue almost every point in $D_0^{cu}$
belongs to the basin of $m$. Next, consider the open set
$$
D_0=\bigcup_{\zeta\in D_0^{cu}} \cW^s_{loc} (\zeta).
$$
Since the strong stable foliation is absolutely continuous and the basin
is $\cW^s$-saturated, it follows that Lebesgue almost every point in $D_0$
belongs to the basin of $m$. It is clear that we can cover $\cW^c(\Lambda)$
by finitely many such open sets $D_0$. This proves (3), and so the proof of
the lemma is complete.
\end{proof}

\subsection{Construction of physical measures}

We are nearly done with the proof of Theorem~\ref{t.mainB}.
By Proposition~\ref{p.somenegative}, all ergodic Gibbs $u$-states have non-negative
center Lyapunov exponent. The case when the exponent vanishes for some Gibbs $u$-state
is handled by Proposition~\ref{p.elliptic}: we get alternative (a) of the theorem in
this case. Finally, if the center Lyapunov exponent is negative for all Gibbs
$u$-states over some attractor $\Lambda_i$ of $f_c$ then, by Lemma~\ref{l.characterization},
the center direction of $f$ is mostly contracting on that attractor $\Lambda_i$.
Then, by Bonatti, Viana~\cite{BoV00}, there are finitely many ergodic Gibbs $u$-states
supported in $\cW^c(\Lambda_i)$, these $u$-states are the physical measures of $f$,
and the union of their basins covers a full volume measure subset of a neighborhood
of $\cW^c(\Lambda_i)$. By Theorem~\ref{t.mainI}, all these physical measures are Bernoulli
for some iterate of $f$. Thus, we get alternative (b) of the theorem in this case.

From now on, let $\{ m_{i,j}\}_{j=1}^{J(i)}$ be the physical measures supported on each
attractor $\Lambda_i$. As we have just seen, their basins cover a full Lebesgue measure
subset of a neighborhood $U_i$ of $\cW^c(\Lambda_i)$. We want to prove that the union
of all these basins contains a full Lebesgue measure subset of the ambient manifold.
Suppose otherwise, that is, suppose the complement $C$ of this union has positive Lebesgue
measure. Let $C_0\subset C$ be the set of Lebesgue density points of $C$. Notice that $C_0$
is $f$-invariant and $\Leb(C_0)=\Leb(C)$. Since the unstable foliation is absolutely continuous,
there is a $\cW^u$-disk $D^u$ such that $\Leb_{D^u}(D^u\cap C_0)>0$. Denote $I^u=D^u\cap C_0$.
Then every Cesaro accumulation point of the iterates of Lebesgue measure on $I^u$ is a
Gibbs $u$-state (see \cite{Beyond}, section 11.2), and so its ergodic components are ergodic
Gibbs $u$-states. Let $m^*$ be any such accumulation point and $m_{i,j}$ be an ergodic
component of $m^*$. The support of $m_{i,j}$ is contained in $U_i$, and so there is $n_0 \ge 1$
such that $f^{n_0}(I^u)$ intersects $U_i$. Recalling that $C_0$ is invariant, we get that
$\Leb(C_0\cap U_i)>0$. This contradicts the definition of $C_0$, since Lebesgue almost every
point in $U_i$ belongs to the basin of $m_{i,l}$ for some $l=1, \dots, J(i)$.
This contradiction proves that the union of the basins does have full Lebesgue measure in $N$.
That completes the proof of Theorem~\ref{t.mainB}.

\subsection{Number of physical measures}

In this section, we give explicit upper bounds on the number of physical
measures for some diffeomorphisms with mostly contracting center direction:

\begin{theorem}\label{t.mainC}
Let $f\in\cP_1^k(N)$, $k>1$ be accessible on some attractor $\Lambda$ and have
absolutely continuous center stable foliation.
Assume there exists some center leaf $\ell\subset \cW^c(\Lambda)$ such that
$f^\kappa(\ell)=\ell$ for some $\kappa\ge 1$ and $f^{\kappa}\mid \ell$ is
Morse-Smale with periodic points $p_1, \cdots, p_s$.

Then the center direction is mostly contracting over $\Lambda$ and $f$ has
at most $s$ physical measures supported in $\cW^c(\Lambda)$.
If $\cW^u(p_i)$ intersects $W^s(\ell) \setminus \cup_{j=1}^s \cW^s(p_j)$
for every $i$ then $f$ has at most $s/2$ physical measures supported in
$\cW^c(\Lambda)$.
\end{theorem}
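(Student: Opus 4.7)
The plan is to proceed in three stages: first establish mostly contracting center over $\Lambda$, then count physical measures via their intersections with the periodic leaf $\ell$, and finally sharpen the count using the strong unstable manifold hypothesis.

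First I would show that $f$ restricted to $\cW^c(\Lambda)$ has mostly contracting center direction. By Proposition~\ref{p.somenegative} every ergodic Gibbs $u$-state supported in $\cW^c(\Lambda)$ has non-positive center Lyapunov exponent; the task is to exclude the zero case. Suppose $m$ is such a Gibbs $u$-state with $\lambda^c(m)=0$. Proposition~\ref{p.elliptic}(1) then produces a continuous bi-invariant disintegration $\{m_\xi\}$ whose conditional on every center leaf over $\Lambda$ is equivalent to $\Leb^c$ with density in $[K^{-1},K]$. Since $\pi_c(\ell)\in\Lambda$, the measure $m_\ell$ is well-defined and equivalent to $\Leb^c|_\ell$. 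The $f$-invariance of $m$ together with essential uniqueness of disintegration yield $f_*m_\xi=m_{\fc(\xi)}$; iterating $\kappa$ times and using $\fc^\kappa(\pi_c(\ell))=\pi_c(\ell)$ gives $(f^\kappa|_\ell)_*m_\ell=m_\ell$. But $f^\kappa|_\ell$ is Morse-Smale on a circle, so admits no invariant probability equivalent to Lebesgue (its only invariant probabilities are convex combinations of Diracs at $p_1,\dots,p_s$). This contradiction, together with Lemma~\ref{l.characterization}, yields mostly contracting center over $\Lambda$.

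Next I would bound the number of physical measures by $s$. By Proposition~\ref{p.finitenessinleafspace}, every physical measure $m_j$ supported in $\cW^c(\Lambda)$ projects to $\pi_c(\supp m_j)=\Lambda\ni\pi_c(\ell)$, so $\ell\cap\supp m_j\neq\emptyset$. This intersection is closed and $f^\kappa$-invariant inside $\ell$, and must therefore contain a non-wandering point of $f^\kappa|_\ell$, i.e.\ some $p_i$. By Lemma~\ref{l.disjoint} the supports of distinct ergodic Gibbs $u$-states are pairwise disjoint, so distinct physical measures correspond to distinct $p_i$'s, giving at most $s$ physical measures.

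The refined bound reduces to showing that under the additional hypothesis every $\supp m_j$ actually contains an \emph{attracting} periodic point of $f^\kappa|_\ell$; since a Morse-Smale circle diffeomorphism has equally many attractors and repellers, this yields at most $s/2$ physical measures. Suppose for contradiction $\supp m_j$ contains only repelling points from $\{p_1,\dots,p_s\}$, and fix such a repeller $p_i$. By hypothesis pick $y\in\cW^u(p_i)\cap\bigl(W^s(\ell)\setminus\bigcup_{k=1}^s \cW^s(p_k)\bigr)$. Since $\supp m_j$ is $\cW^u$-saturated (Proposition~\ref{p.Gibbsustates}), $y\in\supp m_j$. From $y\in W^s(\ell)$ we get $d(f^n(y),\ell)\to 0$, and using the local fiber-bundle trivialization of $\cW^c$ near $\ell$ we may track the forward orbit of $y$ as a sequence on $\ell$ that essentially follows the dynamics of $f^\kappa|_\ell$. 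Its $\omega$-limit is some $p\in\{p_1,\dots,p_s\}$; convergence to a repeller would force $y\in\cW^s(p)$, which is excluded, so $p$ must be an attractor and $f^n(y)\to p$ in $N$. Closedness of $\supp m_j$ gives $p\in\supp m_j$, contradicting the standing assumption. The hardest step is this last one: formally justifying the tracking of $f^n(y)$ via the fiber-bundle trivialization near $\ell$, and in particular showing that $\omega$-limit on a repelling periodic point forces $y\in\cW^s(p)$. This rests on $\dim E^c=1$, so that the center is the unique non-contracting direction at a repeller, combined with the local product structure of the partially hyperbolic splitting at the hyperbolic periodic point $p$.
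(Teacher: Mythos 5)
Your proposal is correct and follows essentially the same strategy as the paper: rule out the zero-exponent (rotation-extension) alternative of Theorem~\ref{t.mainB} using the hyperbolic periodic points on $\ell$, then use disjointness of supports of ergodic Gibbs $u$-states (Lemma~\ref{l.disjoint}) together with the fact that each support must contain some $p_i$, and finally use the hypothesis on $\cW^u(p_i)$ to force each support to contain an attracting $p_i$, giving the bound $s/2$. The step you flag as hardest is actually immediate from the paper's definition $\cW^s(\ell)=\cup_{\zeta\in\ell}\cW^s(\zeta)$: your point $y$ lies on the strong stable leaf of a specific $\zeta\in\ell$ which is not one of the $p_k$, hence non-periodic, so $\zeta$ (and therefore $y$) converges forward to an attracting $p_i$ — no shadowing or fiber-bundle tracking argument is needed.
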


\begin{proof}
Since $f$ has hyperbolic periodic point in $\cW^c(\Lambda)$ the restriction
of $f$ to $\cW^c(\Lambda)$ can not be conjugate to a rotation extension over
$\Lambda$. Thus, by Theorem~\ref{t.mainB}, $f$ has mostly contracting center
direction over $\Lambda$.

\begin{lemma}\label{l.nouniformcontracting}
Suppose $f\in\cP_1^k(N)$, $k>1$ has mostly contracting center direction on an
attractor $\Lambda$ and let $p$ be any periodic point in $\cW^c(\Lambda)$.
Then any disk $D^u$ in unstable manifold of $p$ contains a positive measure
subset $I^u$ such that any $\xi\in I^u$ belongs to the basin of some physical
measure and has local stable manifold $W_{loc}^s(\xi)$.
\end{lemma}

\begin{proof}
As in the proof of Lemma~\ref{l.characterization}, there is a positive measure
subset $I^u$ of $D^u$ belonging to the basin of some physical measure $m$,
and for $\xi\in I^u$, there is $n_0$ such that $f^{n_0}(\xi)$ belongs to the
Pesin stable manifold of some point $\zeta$. Iterating backward we obtain a
local stable manifold for $\xi$.
\end{proof}

Suppose $f$ has physical measures $\{m_j\}_{j=1}^{J}$ on $\cW^c(\Lambda)$.
Let $p_t$, $t=1, \dots, s$ be fixed as in Theorem~\ref{t.mainC}.
Since the support of each physical measure is a $u$-saturated compact set,
the following fact is an immediate consequence of Lemma~\ref{l.disjoint}:

\begin{corollary}\label{c.uniqueinsupport}
For each $1\le t \le s$ there is at most one physical measure whose support
intersects $W^s(p_t)$.
\end{corollary}

%

As observed before, the unstable foliation is minimal in every attractor
in the quotient. So, the orbit of every strong unstable leaf intersects
$W^s(\ell)=\cup_{t=1}^s W^s(p_t)$. Since the supports of physical measures are
$\cW^u$-saturated and invariant, it follows that for every $1\leq j\leq J$
there exists some $1\leq t \leq s$ such that $\supp m_j$ intersects $W^s(p_t)$.
So, by Corollary~\ref{c.uniqueinsupport}, $J\leq s$.

Let $\{p_{s_i}\}_{i=1}^{s/2}$ be periodic points in $\ell$ with stable index $d_s$
(i.e. repellers for $f\mid\ell$) and let $\{p_{s_i}\}_{i=s/2+1}^s$ be periodic points
in $\ell$ with stable index $d_{cs}$ (i.e. attractors for $f\mid\ell$).
We claim that if $\cW^u(p_i)$ intersects $\cW^s(\ell) \setminus \cup_{j=1}^s \cW^s(p_j)$
for every $i$, then the support of every physical measure contains some $p_{i}$,
$s/2+1\leq i \leq s$. Indeed, by the previous observations the support must intersect
$W^s(p_i)$ for some $i$, corresponding to either an attractor or a repeller of
$f\mid\ell$. In the former case, the claim is proved; in the latter case, our assumption
on $\ell$ implies that the support intersects the stable set of some other periodic
point $p_j$ which is an attractor, and so the claim follows in just the same way.
So, by the previous argument, the number of physical measures can not exceed $s/2$
in this case. The proof of Theorem~\ref{t.mainC} is complete.
\end{proof}

\subsection{Statistical stability}\label{ss.statisticalstability}

We also want to analyze the dependence of the physical measures on the dynamics. For this,
we assume $N=M\times S^1$ and restrict ourselves to the set $\cS^k(N)\subset\cP_1^k(N)$
of skew-product maps. Notice that every $f\in\cS^k(N)$ is dynamically coherent, has
compact one-dimensional center leaves, and absolutely continuous center stable foliation.
As pointed out before, partially hyperbolicity is an open property and accessibility
holds on an open and dense subset of $\cS^k(N)$.

\begin{theorem}\label{t.mainD}
For any $k > 1$ there exists a $C^1$ open and $C^k$ dense subset $\cBk(N)$ of $\cS^k(N)$
such that every $f\in\cBk(N)$ has mostly contracting center direction.
Moreover, on a $C^k$ open and dense subset of $\cBk(N)$ the number of physical measures
is locally constant and the physical measures depend continuously on the diffeomorphisms.
\end{theorem}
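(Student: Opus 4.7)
The plan is to prove the two parts of the theorem in sequence. For the first part, I would take $\cBk(N)$ to be the set of $f \in \cS^k(N)$ whose center direction is mostly contracting. For $C^k$ density: given any $f \in \cS^k(N)$, since the base is transitive Anosov, periodic center leaves are dense, and a $C^k$-small perturbation of $f$ supported in a neighborhood of one such leaf $\ell$ arranges that $f^{\per(\ell)}\mid\ell$ is Morse-Smale with a periodic attractor; Theorem~\ref{t.mainC} then yields mostly contracting center direction over the unique attractor $\Lambda = M$ of $f_c$. For $C^1$ openness: by Lemma~\ref{l.characterization}, mostly contracting is equivalent to negativity of the center Lyapunov exponent of every ergodic Gibbs $u$-state. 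The space of Gibbs $u$-states varies upper semicontinuously in the weak$^*$ topology under $C^1$ perturbations (strong unstable manifolds, their Lebesgue measures, and the densities of Gibbs $u$-states depend $C^1$-continuously), there are only finitely many ergodic components with continuously varying compact supports, and the functional $(f,\nu)\mapsto\int\log|Df\mid E^c|\,d\nu$ is jointly continuous in the $C^1$-weak$^*$ topology, so mostly contracting persists on a $C^1$ neighborhood.

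For the statistical stability part, the starting point is the upper semicontinuity of the number of ergodic Gibbs $u$-states. By Andersson~\cite{An10}, the map $J: \cBk(N) \to \NN$ assigning to $f$ its number of ergodic Gibbs $u$-states satisfies $J(g) \le J(f)$ on a $C^k$ neighborhood of $f$. Since $J$ is integer-valued and bounded below by $1$, the set $\cG := \{f \in \cBk(N) : J \text{ is locally constant at } f\}$ is $C^k$ open and dense: $\cG$ is open by definition, and for density, on any nonempty open $U \subset \cBk(N)$ the minimum $k_0 = \min_{g \in U} J(g)$ is attained at some $g_0 \in U$; upper semicontinuity provides an open neighborhood $V$ of $g_0$ with $J \le k_0$ on $V$, hence $J \equiv k_0$ on the nonempty open set $V \cap U \subset \cG$.

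Finally, for continuity of the physical measures on $\cG$: fix $f \in \cG$ with physical measures $m_1, \dots, m_J$ and hyperbolic periodic points $p_j \in \supp m_j$ (supplied by Katok~\cite{Ka80}; the $\supp m_j$ are pairwise disjoint by Lemma~\ref{l.disjoint}). For $g$ in a sufficiently small $C^k$ neighborhood of $f$ inside $\cG$, the continuations $p_j(g)$ are hyperbolic periodic points with negative center Lyapunov exponent, and each $p_j(g)$ lies in the support of a unique ergodic Gibbs $u$-state $m_{\sigma(j)}(g)$ (uniqueness from Lemma~\ref{l.disjoint} applied to $g$; membership from $\cW^u$-saturation of supports of Gibbs $u$-states applied to Cesaro averages of Lebesgue measure on a disk in $W^u(p_j(g))$). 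The map $\sigma : \{1,\dots,J\}\to\{1,\dots,J\}$ is bijective since $J(g)=J$ and since it is injective: by Proposition~\ref{p.minimalufoliation}, $\supp m_{\sigma(j)}(g) = \overline{\cW^u(\Orb(p_j(g)))}$, so $\sigma(j)=\sigma(j')$ with $j\neq j'$ would force $\Orb(p_{j'}(g))\subset\overline{\cW^u(\Orb(p_j(g)))}$ for all $g$ near $f$, and passing to the limit would put $\supp m_{j'}$ inside $\overline{\cW^u(\Orb(p_j))} = \supp m_j$, contradicting Lemma~\ref{l.disjoint} for $f$. Weak$^*$ convergence $m_{\sigma(j)}(g) \to m_j$ as $g\to f$ then follows: any weak$^*$ accumulation point is a Gibbs $u$-state of $f$ whose support contains $p_j$, so $m_j$ appears as an ergodic component; by bijectivity of $\sigma$ and a pigeonhole argument across the $J$ sequences $\{m_{\sigma(j)}(g)\}$, each accumulation point must equal $m_j$ itself. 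The main obstacle I anticipate is the Hausdorff-semicontinuity step used for injectivity of $\sigma$, which requires tracking closures of strong unstable manifolds of continuations under perturbation and exploiting the minimality of the strong unstable foliation on connected components of supports supplied by Proposition~\ref{p.minimalufoliation}.
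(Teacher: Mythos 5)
Your overall architecture differs from the paper's in a way that creates a genuine gap at the very first step: the $C^1$ openness of your $\cBk(N)$. You take $\cBk(N)$ to be the full set of skew-products with mostly contracting center and argue openness through upper semicontinuity of the set of Gibbs $u$-states under $C^1$ perturbation. That semicontinuity is not available: proving that weak$^*$ limits of Gibbs $u$-states of nearby maps are Gibbs $u$-states of the limit map requires uniformly bounded distortion along unstable leaves, hence uniform $C^{1+\mathrm{H\ddot{o}lder}}$ control of $Df$, which a $C^1$ neighborhood inside $\cS^k(N)$ does not provide; this is precisely why ``mostly contracting'' is only known to be open in the $C^k$, $k>1$ topology (Andersson~\cite{An10}), as the paper itself states. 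The paper sidesteps the issue by defining $\cBk(N)$ as the intersection of two conditions that are genuinely $C^1$ open and $C^k$ dense --- accessibility on all attractors (via a variation of Ni\c{t}ic\u{a}--T\"or\"ok~\cite{NT01}, discussed explicitly because accessibility over attractors within $\cS^k(N)$ is not literally the setting of \cite{NT01}) and the existence of a center leaf containing a hyperbolic periodic point --- and then deduces mostly contracting for each individual $f$ in that set from Theorem~\ref{t.mainB}, absolute continuity of $\cW^{cs}$ being automatic for skew-products. Your density argument has the same omission in miniature: after creating a Morse--Smale periodic center leaf you invoke Theorem~\ref{t.mainC}, but that theorem (like Theorem~\ref{t.mainB}) requires accessibility on the attractor, which your perturbation does not arrange and which must be obtained separately.

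For the second assertion the paper simply quotes \cite{An10} (locally constant number of physical measures and continuous dependence on a $C^k$ open and dense subset), so your self-contained argument is a different route; the semicontinuity-of-$J$ and openness/density of $\cG$ part is fine, but the continuity argument has two soft spots. First, the membership claim that each continuation $p_j(g)$ lies in the support of some ergodic Gibbs $u$-state of $g$ does not follow from Cesaro averages of Lebesgue measure on a disk in $\cW^u(p_j(g))$: such limits are Gibbs $u$-states supported inside $\overline{\cW^u(\Orb(p_j(g)))}$, but nothing forces their supports to contain $p_j(g)$ (a hyperbolic periodic point of stable index $d_{cs}$ need not belong to the support of any Gibbs $u$-state). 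Second, your injectivity step for $\sigma$ needs upper semicontinuity of closures of unstable manifolds of continuations, which fails in general (these closures are only lower semicontinuous) --- the obstacle you flag yourself. Both problems are repaired by running the argument the other way, as in the paper's proof of Theorem~\ref{t.mainI}: $C^k$-semicontinuity of the set of Gibbs $u$-states as $g\to f$ shows that the support of \emph{every} ergodic Gibbs $u$-state of $g$ contains \emph{some} $p_i(g)$; disjointness of supports (Lemma~\ref{l.disjoint}) together with the locally constant count $J(g)=J(f)$ on $\cG$ then pigeonholes a bijection, after which your weak$^*$-convergence argument does go through. Alternatively, cite \cite{An10} for this part, as the paper does.
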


\begin{proof}
Notice that every $f\in\cS^k(N)$ is dynamically coherent, has compact one-dimensional
center leaves, and absolutely continuous center stable foliation. By a variation of an
argument of Ni\c{t}ic\u{a}, T\"{o}r\"{o}k~\cite{NT01}, one gets that the set of diffeomorphisms
in $\cS^k(N)$ which are accessible on all attractors are $C^1$ open and $C^r$ dense.
Let us comment a bit on this, since our setting is not exactly the same. The heart of
the proof is to show that the accessibility class of any point contains the corresponding
center leaf. This is done by considering 4-leg $su$-paths linking the point to every
nearby point in the center leaf; in this way one gets that every accessibility class is
open in the center leaf; then, connectivity gives the conclusion. The only difference in
our case is that we deal with accessibility over each of the attractors, not the whole
ambient manifold. However, the arguments remains unchanged, just with the additional caution
to choose the corners of the $4$-leg $su$-path to be points over the attractor.
It is easy to see that the set of diffeomorphisms in $\cS^k(N)$
which have a center leaf containing some hyperbolic periodic point is $C^1$ open and $C^r$
dense. Take $\cBk$ be the intersection of above two sets. Then by Theorem~\ref{t.mainB},
any $f\in \cB^k$ has mostly contracting center bundle.
By Andersson~\cite{An10}, for any partially hyperbolic diffeomorphism $f$ with mostly
contracting center direction there is a $C^k$, $k>1$ neighborhood $\mathcal U$ of $f$
such that any $g\in \cal U$ has mostly contracting center direction also,
and on a $C^k$ open and dense subset of $\mathcal U$, the number of physical measures
is locally constant and the physical measures depend continuously on the diffeomorphism.
This ends the proof of Theorem~\ref{t.mainD}.
\end{proof}

\section{Absolute continuity for mostly contracting center}\label{s.criteria}

Throughout this section $f:N\to N$ is a partially hyperbolic, dynamically coherent, $C^k$, $k>1$
diffeomorphism with mostly contracting center direction. Recall the later is a robust (open) condition,
by Andersson~\cite{An10}. We develop certain criteria for proving absolute continuity of the
center stable, center unstable, and center foliations and we apply these tools to exhibit several
robust examples of absolute continuity. In particular, this yields a proof of Theorem~\ref{t.mainG}.

The starting point for our criteria is the observation that for maps with mostly contracting
center the Pesin stable manifolds are contained in, and have the same dimension as the center
stable leaves. Since the Pesin stable lamination is absolutely continuous (\cite{Pe76,PS89}),
in this way one can get a local property of absolute continuity for the center stable foliation.
This initial step of the construction is carried out in Section~\ref{ss.localabscont}.
Then one would like to propagate this behavior to the whole ambient manifold, in order to obtain
actual absolute continuity. It is important to point out that this can not possibly work without
additional conditions. Example~\ref{ex.simpleexample} below illustrates some issues one encounters.
A more detailed analysis, including explicit robust counter-examples will appear in \cite{VY2}.
Suitable assumptions are introduced in Section~\ref{ss.criteria}, where we also give the
precise statements of our criteria. In Section~\ref{ss.recurrence} we present the main tool
for propagating local to global behavior. The criteria are proved in Sections~\ref{ss.upperleafwise}
through~\ref{ss.newcriterion}.

Before proceeding, let us give a simple example of a map whose center foliation is leafwise absolutely
continuous and locally absolutely continuous, but not globally absolutely continuous.
This kind of construction explains why Pesin theory alone can not give (global) absolute continuity
of center foliations, even when the center direction is mostly contracting.

\begin{example}\label{ex.simpleexample}
Let us start with $f_0:S^1\times [0,1] \to S^1\times [0,1]$, $f_0(x,t)=(2x,g(t))$ where $g:[0,1]\to[0,1]$
is a $C^2$ diffeomorphism such that $g(0)=0$, $g(1)=1$, $g(t)<t$ for all $t\in (0,1)$, and $0 < g'(t)< 2$
for every $t\in[0,1]$. Then $f_0$ is a partially hyperbolic endomorphism of the cylinder, with the vertical
segments as center leaves. Next, let $f:S^1\times [0,1] \to S^1\times [0,1]$ be a $C^2$-small perturbation,
preserving the two boundary circles $\cC_i = S^1\times\{i\}$, $i=0, 1$ and the vertical line $\{0\}\times[0,1]$
through the fixed point $(0,0)$. Moreover, the horizontal derivatives of $f$ at the endpoints of this vertical
line should be different:
\begin{equation}\label{eq.derivativesaredifferent}
\frac{\partial f}{\partial x}(0,0) \neq \frac{\partial f}{\partial x}(0,1).
\end{equation}
By the stability of center foliations (\cite{HPS77}, the new map $f$ has a center foliation whose
leaves are curve segments with endpoints in the two boundary circles. Thus, they induce a holonomy map
$h:\cC_0\to\cC_1$ that conjugates the two expanding maps $f \mid \cC_0$ and $f \mid \cC_1$.
Condition \eqref{eq.derivativesaredifferent} implies that the conjugacy can not be absolutely continuous
(see \cite{SS85}). This shows that the center foliation is not absolutely continuous. Yet, it is absolutely
continuous restricted to $S^1 \times [0,1)$, as we are going to explain. Notice that our assumptions
imply that $g'(0) < 1 < g'(1)$ and so the lower boundary component $\cC_0$ is an attractor for $f_0$,
with $S^1 \times [0,1)$ as its basin of attraction. Then the same is true for the perturbation $f$.
Moreover, restricted to this basin, the center leaves coincide with the Pesin stable manifolds of
the points in the attractor, and so they do form an absolutely continuous foliation. In particular,
this also shows that the center foliation is leafwise absolutely continuous.
\end{example}

\subsection{Criteria for absolute continuity}\label{ss.criteria}

We assume that some small cone field around the strong unstable bundle has been fixed.
We call \emph{$u$-disk} any embedded disk of dimension $d_u$ whose tangent space is
contained in that unstable cone field at every point. Previously, we introduced the
special case of $\cW^u$-disks, which are contained in strong unstable leaves.
To begin with, in Section~\ref{ss.upperleafwise} we prove that upper leafwise
absolute continuity always holds in the present context:

\begin{proposition}\label{p.mainF}
The center stable foliation of $f$ is upper leafwise absolutely continuous,
if it exists.
\end{proposition}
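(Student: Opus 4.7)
I would argue by contradiction. Suppose there is a measurable set $Y \subseteq N$ with $\Leb(Y) > 0$ while $\Leb_{\cW^{cs}(z)}(Y) = 0$ for $\Leb$-almost every $z \in N$. The strategy is to trap a $\Leb$-positive piece of (some forward iterate of) $Y$ inside the Pesin-stable saturation of a Pesin block for a physical measure of $f$, and then contradict the hypothesis on $Y$ by Fubini, using the absolute continuity of the Pesin stable lamination.

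Bonatti--Viana~\cite{BoV00} provides finitely many physical measures $m_1, \dots, m_r$, all ergodic Gibbs $u$-states with negative center Lyapunov exponent, whose basins cover $N$ up to a $\Leb$-null set. Fix $j$ with $\Leb(Y \cap B(m_j))>0$. Birkhoff's theorem at points of $B(m_j)$ yields $\tfrac{1}{n}\log\|Df^n|E^c_z\| \to \lambda^c(m_j) < 0$, so combined with the uniform contraction along $E^s$, Pesin's stable manifold theorem supplies at $\Leb$-a.e.\ $z \in B(m_j)$ a Pesin local stable manifold $W^{ps}(z)$ of dimension $d_{cs}$, tangent to $E^{cs}_z$; uniqueness of integrability of $E^{cs}$ identifies $W^{ps}(z)$ with an open disk inside $\cW^{cs}(z)$. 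A standard Pesin-block construction gives a compact set $\Pi$ on which these local stable manifolds vary continuously with uniform lower bound on their sizes, and whose Pesin-stable saturation $S = \bigcup_{x \in \Pi} W^{ps}(x)$ has positive Lebesgue measure. By routine Pesin theory (using that $B(m_j)$ is $\cW^s$-saturated and the Pesin stable lamination is absolutely continuous), $\Leb$-a.e.\ point of $B(m_j)$ enters $S$ in forward time. Since the hypothesis on $Y$ is $f$-invariant --- $f$ permutes center stable leaves with bounded leafwise Jacobians --- after replacing $Y$ by a suitable $f^n(Y)$ we may assume $\Leb(Y \cap S) > 0$.

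The Pesin absolute continuity theorem~\cite{Pe76,PS89} provides, in a foliated neighborhood of $\Pi$ adapted to $\{W^{ps}(x)\}_{x \in \Pi}$, a disintegration of $\Leb$ with conditionals equivalent to Riemannian volume on the Pesin stable plaques. Fubini then yields a transverse-positive subset $\Pi_0 \subseteq \Pi$ of points $x$ with $\Leb_{W^{ps}(x)}(Y)>0$, hence $\Leb_{\cW^{cs}(x)}(Y)>0$ because $W^{ps}(x) \subseteq \cW^{cs}(x)$. A second application of the Pesin absolute continuity shows that $\bigcup_{x \in \Pi_0} W^{ps}(x)$ has positive Lebesgue measure in $N$; every $z$ in this saturation satisfies $\cW^{cs}(z) = \cW^{cs}(x)$ for some $x \in \Pi_0$, and thus $\Leb_{\cW^{cs}(z)}(Y)>0$ on a $\Leb$-positive set of $z$, contradicting the hypothesis.

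I expect the delicate step to be the Pesin-block machinery in paragraph~2: arranging a single Pesin block $\Pi$ whose saturation $S$ captures, after forward iteration, $\Leb$-a.e.\ point of $B(m_j)$, and verifying that the hypothesis on $Y$ genuinely transfers to $f^n(Y)$. Both facts are standard once one commits to the Bonatti--Viana framework for mostly contracting center, and the ultimate payoff is the Fubini contradiction in paragraph~3, whose heart is Pesin's absolute continuity theorem applied to a lamination that, thanks to mostly contracting, lives inside the center stable foliation.
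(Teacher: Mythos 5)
Your overall mechanism --- a Fubini argument along an absolutely continuous family of $d_{cs}$-dimensional Pesin stable disks lying inside center stable leaves --- is sound, and it is genuinely different from the paper's route (the paper reduces to density points, cuts $Y$ with a $u$-disk, and invokes the recurrence statement of Proposition~\ref{p.csboxsection} to push a positive measure piece of $Y$ into a fixed $cs$-block over a $u$-disk in the support of a Gibbs $u$-state). However, there is a genuine gap at the step you label routine: the claim that $\Leb$-almost every point of $B(m_j)$ enters $S=\bigcup_{x\in\Pi}W^{ps}(x)$ under forward iteration. Lebesgue measure is not invariant, so no recurrence theorem applies; $S$ is in general a non-open positive $\Leb$-measure set whose $m_j$-measure may even vanish (your $\Pi$ is built from Lebesgue-generic points of the basin, and $m_j$ is typically singular); and membership in $B(m_j)$ only gives weak$^*$ convergence of empirical measures, which forces orbits into open neighborhoods of sets of positive $m_j$-measure but never into a prescribed positive $\Leb$-measure set. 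Neither $\cW^s$-saturation of the basin nor absolute continuity of the stable lamination helps here. This is exactly the difficulty Proposition~\ref{p.csboxsection} is built to overcome, and even there the conclusion is much weaker (positive measure subsets of unstable disks hit a thickened block, proved via Cesaro limits of iterated Lebesgue measure along $\cW^u$, the product structure of $cs$-blocks, and bounded distortion), not an almost-everywhere hitting statement. As written, paragraph~2 does not go through.

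The good news is that your own framework does not need that recurrence at all: since $\Leb$-a.e.\ point of $B(m_j)$ --- in particular a.e.\ point of $Y\cap B(m_j)$ --- carries a $d_{cs}$-dimensional stable disk inside its center stable leaf, you can choose the block $\Pi$ so that $\Leb(Y\cap\Pi)>0$ from the outset (blocks with uniform estimates exhaust these points mod~$0$) and run your paragraph~3 directly on $Y$, with no forward iteration of $Y$; this would give a proof bypassing Proposition~\ref{p.csboxsection} altogether. Two further points need more care than you give them. First, at Lebesgue-generic points you cannot quote the classical Pesin stable manifold and absolute continuity theorems, which concern regular points of an invariant hyperbolic measure; you need the hyperbolic-times (Pliss) construction of stable disks of uniform size and the absolute continuity of such families, as in \cite{ABV00,BoV00}, and for higher-dimensional center (allowed in Proposition~\ref{p.mainF}) a subadditivity estimate, not Birkhoff, to get $\limsup_n\frac 1n\log\|Df^n\mid E^{cs}_z\|<0$ at basin points. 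Second, the inclusion $W^{ps}(z)\subset\cW^{cs}(z)$ does not follow from ``uniqueness of integrability'' of $E^{cs}$ (integrability does not imply unique integrability); the fact is true here, but for the standard dynamical reasons the paper uses tacitly, not the one you cite.
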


For the next criterion we assume the diffeomorphism is non-expanding along
the center direction. This notion is defined as follows. Assume also $f$
is dynamically coherent. Given $r>0$ and $*\in\{s,cs,c,cu,u\}$, we denote
by $\cW^*_r(x)\subset\cW^*(x)$ the ball of radius $r$ around $x$,
relative to the distance induced by the Riemannian metric of $N$ on the
leaf $\cW^*(x)$. In what follows we always suppose $r$ is small enough
so that $\cW^*_r(x)$ is an embedded disk of dimension $d_*$ for all $x\in M$
and every choice of $*$. We use $\hW^s(p)$ and $\hW^u(p)$ to denote the
stable and unstable sets of a periodic point $p$. We say that $f$ is
\emph{non-expanding along the center direction} if there exist $\rho>0$
and $\vep>0$ such that
\begin{itemize}
\item $f^n(\cW^{cs}_\vep(x)) \subset\cW^{cs}_\rho(f^n(x))$ for every
$n\ge 0$ and almost any $x$ in any $u$-disk.
\item the support of every ergodic Gibbs $u$-state $m$ contains some
periodic point $p$ such that $\hW^s(p) \supset \cW^{cs}_{2\rho}(p)$
\end{itemize}

\begin{proposition}\label{p.nonexpansion}
If $f$ is non-expanding along the center direction then the center
stable foliation is absolutely continuous.
\end{proposition}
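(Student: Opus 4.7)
The plan is to upgrade the local absolute continuity provided by Pesin theory to global absolute continuity of $\cW^{cs}$, using the two non-expanding conditions as tools for propagation. As a starting point, Lemma~\ref{l.characterization} gives that every ergodic Gibbs $u$-state $\nu$ has negative center Lyapunov exponent; by Pesin theory, $\nu$-almost every point admits a local stable manifold of dimension $d_{cs}$, which is an open subset of its $cs$-leaf, and the Pesin stable lamination is absolutely continuous. A priori, however, the sizes of the Pesin charts are non-uniform, so one only obtains absolute continuity on subsets of measure less than full inside each chart.

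The role of hypothesis~(2) is to remove this non-uniformity at distinguished periodic points: the support of every ergodic Gibbs $u$-state contains some periodic $p$ whose genuine stable set already contains the entire ball $\cW^{cs}_{2\rho}(p)$. Hence on a $\rho$-neighborhood of $p$ the $cs$-foliation coincides with a uniformly sized piece of the classical stable foliation of the hyperbolic orbit of $p$, and standard Brin--Pesin--Pugh--Shub arguments give that the $cs$-holonomy between any two $u$-disks lying inside this neighborhood has bounded Jacobian.

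Given small transversal $u$-disks $D_1, D_2$ and a Lebesgue null set $E\subset D_1$, the plan is to forward iterate until essentially all of $E$ lies in the good neighborhood of one of these periodic points, and then appeal to the local absolute continuity. Hypothesis~(1) of non-expanding is what makes the pushforward well behaved: since $f^n(\cW^{cs}_\vep(x))\subset \cW^{cs}_\rho(f^n(x))$ for a.e. $x\in D_1$, the $cs$-plaques stay small under iteration, so if $D_1$ and $D_2$ are close enough and $f^n(x)$ lies within $\rho$ of $p$, then the $cs$-holonomy image $f^n(h(x))$ lies within $\cW^{cs}_{2\rho}(p)\subset\hW^s(p)$, which is precisely the regime covered by hypothesis~(2). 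The recurrence machinery of Section~\ref{ss.recurrence}, applied to Cesaro accumulation points of $\Leb_{D_1}$ (which are Gibbs $u$-states by \cite[Chap.~11]{Beyond}), shows that Lebesgue almost every $x\in D_1$ has an iterate $f^n(x)$ falling arbitrarily close to some such $p$. Partitioning $E$ (up to a null set) according to the first such entrance time $n$, and using that $f^n\circ h = h_n\circ f^n$ where $h_n$ denotes the $cs$-holonomy between $f^n(D_1)$ and $f^n(D_2)$, one obtains $\Leb_{f^n(D_2)}(h_n(f^n(E_n)))=0$ from the local absolute continuity near $p$; pulling back by the $C^k$ diffeomorphism $f^{-n}$ yields $\Leb_{D_2}(h(E_n))=0$ for each $n$, and summing gives $\Leb_{D_2}(h(E))=0$.

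The main obstacle will be the final bookkeeping: choosing entrance times simultaneously for the point $x$, its small $cs$-plaque, and the corresponding portion of $f^n(D_2)$, in such a way that $D_1$ is exhausted up to a null set. Proposition~\ref{p.mainF} should help here, since upper leafwise absolute continuity of $\cW^{cs}$ lets one disregard null subsets within individual $cs$-leaves when analyzing the orbits of Lebesgue typical points; combined with the Pliss-type selection of hyperbolic entrance times familiar from the Gibbs $u$-state theory, this should suffice to complete the argument.
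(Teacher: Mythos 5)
Your overall scheme (Pesin local stable plaques of uniform size via hypothesis (2), hypothesis (1) to keep the two holonomy-related points within $\rho$ of each other under iteration, and the recurrence machinery of Section~\ref{ss.recurrence} to bring orbits into the good region) is the same skeleton as the paper's argument, but two steps as you state them do not hold up. First, the local input you claim near the periodic point $p$ is not available: hypothesis (2) only says that the single leaf $\cW^{cs}_{2\rho}(p)$ is contained in the stable set $\hW^s(p)$; it gives no information about the other center stable leaves crossing a $\rho$-neighborhood of $p$, and there is no ``classical stable foliation of the orbit of $p$'' filling such a neighborhood with which $\cW^{cs}$ could be identified. What Pesin theory together with hypothesis (2) actually produce (Lemmas~\ref{l.csbox} and~\ref{l.uniform-csbox}) is a $cs$-block: uniform-size stable plaques through a \emph{positive-measure, not full}, subset of a $u$-disk in $\cW^u(p)$, with uniformly bounded holonomy Jacobians. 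The block has gaps in the unstable direction, so ``$f^n(x)$ lands within $\rho$ of $p$'' gives no control at all on the holonomy at $x$; what is needed is that $f^n(x)$ lands in the block itself -- in fact in a small sub-block, so that the $\rho$-ball $\cW^{cs}_\rho(f^n(x))$ furnished by hypothesis (1) stays inside the plaques of the full block and captures the partner point on the iterated second disk.

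Second, because the good region is only a positive-measure block, your null-to-null exhaustion cannot be closed. Applying Proposition~\ref{p.csboxsection} to the set of points that never enter a block does show that set is $\Leb_{D_1}$-null, and on each piece $E_n$ of your partition (once ``neighborhood of $p$'' is replaced by ``sub-block'') the bounded Jacobian does give $\Leb(h(E_n))=0$; but the residual null set of points that never hit a block is precisely a set on which you have no control of $h$, and bounding the measure of its image is an instance of the statement being proved. Proposition~\ref{p.mainF} concerns leafwise measures and does not constrain the transversal holonomy on a transversally null set, so it cannot fill this hole. The paper avoids the issue by proving the dual statement: the $cs$-holonomy between $\vep$-near $u$-disks sends positive measure sets to positive measure sets. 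For that it suffices to find \emph{one} time $n$ and \emph{one} sub-block $\tilde\cB_j$ with $\Leb_{f^n(D_1)}(f^n(D_1^*)\cap\tilde\cB_j)>0$ (Proposition~\ref{p.csboxsection}); hypothesis (1) keeps $f^n(D_2^*)$ within $\rho$ of $f^n(D_1^*)$, so it meets $\cB_j$ in positive measure by the uniform absolute continuity inside the block, whence $\Leb_{D_2}(D_2^*)>0$, and exchanging the roles of $D_1$ and $D_2$ gives absolute continuity. If you repair your argument along these lines -- recurrence into sub-blocks plus the positive-to-positive formulation to dispose of the residual null set -- you will essentially have reproduced the paper's proof.
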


The proof of this proposition is given in Sections~\ref{ss.localabscont} through~\ref{ss.nonexpansion}.
We will see that the hypothesis holds for a classical construction of partially hyperbolic,
robustly transitive diffeomorphisms due to Ma\~n\'e~\cite{Man78} (Section~\ref{ss.Mane}).
It also holds for a more recent class of examples introduced by Bonatti, Viana~\cite{BoV00},
which are not even partially hyperbolic (though they do admit a dominated invariant splitting
of the tangent bundle), but this fact will not be proved here.

Let $f\in\cP_1^k(N)$. Let $\ell$ be a periodic center leaf $\ell$, with period $\kappa\ge 1$.
For $*\in\{s,u\}$, we denote $\cW^*(\ell)=\cup_{\zeta\in\ell}\cW^*(\zeta)$.
We call \emph{homoclinic leaf} associated to $\ell$ any center leaf $\ell'$ contained in
$\cW^s(\ell)\cap\cW^u(\ell)$. Then there exist strong stable and strong unstable holonomy maps
\begin{equation}\label{eq.usholonomies}
h^s:\ell\to\ell' \quand h^u:\ell\to\ell'
\end{equation}
We say that $\ell$ is \emph{in general position} if
\begin{itemize}
\item[(a)] $f^\kappa \mid \ell$ is Morse-Smale with a single periodic
attractor $a$ and a single periodic repeller $r$;

\item[(b)] $h^s(a \cup r)$ is disjoint from $h^u(a \cup r)$, for some
homoclinic leaf associated to the center leaf $\ell$.

\end{itemize}
Notice that $\cW^s(\ell')\setminus\cW^s(h^s(r))$ is contained in the
stable manifold $\hW^s(a)$ of the attractor. Thus, condition (b)
implies that $\cW^u(a)$ and $\cW^u(r)$ intersect $\hW^s(a)$ transversely.
Analogously, $\cW^s(a)$ and $\cW^s(r)$ intersect $\hW^u(r)$ transversely.

\begin{proposition}\label{p.generalposition}
Suppose $f\in\cP^k_1(N)$ has some center leaf $\ell$ in general position
and such that every strong unstable leaf intersects $\cW^s(\ell)$.
Then the center stable foliation of $f$ is absolutely continuous.
\end{proposition}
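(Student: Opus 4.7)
My plan is to deduce Proposition~\ref{p.generalposition} from Proposition~\ref{p.nonexpansion} by showing that the general position hypotheses imply $f$ is non-expanding along the center direction. As a preliminary step, the assumption that every strong unstable leaf meets $\cW^s(\ell)$ forces $\pi_c(\ell)$ to lie in the unique attractor $\Lambda$ of $\fc$: in the leaf space, the strong unstable set of any point in any attractor intersects $W^s(\pi_c(\ell))$, and by invariance and closedness of attractors this places $\pi_c(\ell)$ in every attractor of $\fc$. Since $\ell$ contains the hyperbolic periodic point $a$, alternative~(a) of Theorem~\ref{t.mainB} is incompatible with the presence of such a point, so $f$ has mostly contracting center direction over $\Lambda$, and Proposition~\ref{p.minimalufoliation} applies.

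To verify condition~(ii) of the non-expansion definition, I take $p=a$. Because $a$ is an attracting periodic point of $f^\kappa\mid\ell$, its stable index is $d_{cs}$, so $\hW^s(a)\supset\cW^{cs}_{2\rho}(a)$ for some small $\rho>0$. To show $a\in\supp m$ for every ergodic Gibbs $u$-state $m$, I pick any strong unstable leaf $L\subset\supp m$, pick $y\in L\cap\cW^s(\ell)$ provided by hypothesis, and write $y\in\cW^s(\zeta)$ for some $\zeta\in\ell$. If $\zeta\ne r$, the Morse-Smale structure gives $f^n(\zeta)\to\mathrm{orbit}(a)$, hence $f^n(y)\to\mathrm{orbit}(a)$, and closedness of $\supp m$ places $a$ there. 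If $\zeta=r$, then $r\in\supp m$ and $\cW^u(r)\subset\supp m$; applying condition~(b) of general position at the homoclinic leaf $\ell'$ yields $h^u(r)=h^s(\zeta')$ for some $\zeta'\in\ell\setminus\{a,r\}$, which by Morse-Smale lies in the basin of $a$, so $f^n(h^u(r))$ again converges to the orbit of $a$ and $a\in\supp m$.

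The delicate part is condition~(i): for Lebesgue-almost every $x$ in every $u$-disk and every $n\ge0$ the plaque $f^n(\cW^{cs}_\vep(x))$ must lie inside $\cW^{cs}_\rho(f^n(x))$ for uniform $\vep,\rho>0$. My strategy combines two ingredients. First, mostly contracting center together with the absolute continuity of the Pesin stable lamination (\cite{Pe76}) provides, for Lebesgue-almost every $x$ in a $u$-disk, a Pesin local stable disk of definite size tangent to $E^{cs}$ and sitting inside the corresponding $\cW^{cs}$-leaf. Second, the local trapping $\cW^{cs}_{2\rho}(a)\subset\hW^s(a)$ at the attractor absorbs any sufficiently small $cs$-plaque once its orbit enters a small neighborhood of the orbit of $a$. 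The main obstacle is uniformity over the transient phase, because the center derivative can expand transiently even though its asymptotic rate is negative, and a priori the time to reach the trapping region near $a$ is unbounded. This is exactly where the general position of $\ell$ enters: the transversality encoded in $h^s(a\cup r)\cap h^u(a\cup r)=\emptyset$ together with the homoclinic leaf $\ell'$ should provide the geometric robustness needed to run a Pliss-type hyperbolic-times argument along $u$-disks, and then absolute continuity of the Pesin stable lamination transfers the local trapping at $a$ to a global uniform trapping valid outside a null set. Once conditions~(i) and~(ii) are in hand, Proposition~\ref{p.nonexpansion} yields absolute continuity of $\cW^{cs}$, completing the proof.
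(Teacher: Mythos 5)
The reduction to Proposition~\ref{p.nonexpansion} is where your argument breaks down: condition (i) of the non-expansion definition is not just left unproved, it actually fails in the general position setting. The Morse--Smale hypothesis forces a hyperbolic periodic point of center-repelling type, namely $r$ (stable index $d_s$). Take the $u$-disk $D=\cW^u_{loc}(r)$ (say $\kappa=1$ and $a,r$ fixed, as in the paper). For $y\in D$ close to $r$, the plaque $\cW^{cs}_\vep(y)$ contains a center arc of radius $\vep$ straddling the repelling orbit inside its center circle; under forward iteration the two sides of this arc are driven apart towards the attracting orbit from opposite sides, and the closer $y$ is to $r$ the longer this expansion persists, so $f^n(\cW^{cs}_\vep(y))$ eventually contains a center arc whose length is close to the full length of the center circle. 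Since condition (ii) forces $2\rho$ to be smaller than the center distance from $a$ to the repelling orbit (because $\hW^s(a)$ omits that orbit inside the center leaf), such an arc cannot fit in $\cW^{cs}_\rho(f^n(y))$; and this happens for every $y$ in a punctured neighborhood of $r$ in $D$, a positive $\Leb_D$-measure set. So no choice of $\vep,\rho$ makes $f$ non-expanding along the center, and the appeal to a ``Pliss-type hyperbolic-times argument'' cannot repair this: hyperbolic times control asymptotic contraction along typical orbits, whereas condition (i) is a uniform, all-$n$ bound on plaque growth. Non-expansion is tailored to situations like Ma\~n\'e's example, where $E^{cs}$ is uniformly contracted outside a small ball; it is not a consequence of general position.

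By contrast, the paper does not route Proposition~\ref{p.generalposition} through Proposition~\ref{p.nonexpansion} at all; general position is used for a different purpose. Via the homoclinic leaf $\ell'$ one builds two $cs$-blocks $\cB_1,\cB_2$ (pulled back from a block over $\cW^u_{loc}(a)$) whose gaps in the center direction lie over the local center stable sets of $r_s=h^s(r)$ and $r_u=h^u(r)$ respectively; condition (b), that $h^s(a\cup r)$ is disjoint from $h^u(a\cup r)$, makes these gaps disjoint, so every local unstable plaque through $\cW^s_\rho(\ell')$ meets $\cB_1\cup\cB_2$ with definite measure. Then the recurrence statement (Proposition~\ref{p.csboxsection} and Remark~\ref{r.lowerbound}), combined with a Lebesgue density point and bounded distortion argument (Lemma~\ref{l.lem2}), shows that large iterates of any positive measure subset of any $u$-disk intersect \emph{both} blocks in positive measure; finally, since $cs$-holonomy displaces iterated unstable plaques only slightly, the image plaque still crosses one of the blocks, and the uniform absolute continuity inside the blocks transfers positivity of measure across the holonomy. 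Your preliminary observations (mostly contracting center over the attractor, $a\in\supp m$ for every ergodic Gibbs $u$-state, $\hW^s(a)\supset\cW^{cs}_{2\rho}(a)$) do match the first lemma of the paper's proof, but the covering-by-two-blocks mechanism, which is the actual content of the proof, is missing from your proposal.
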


This proposition is proved in Section~\ref{ss.newcriterion}.
In Section~\ref{ss.proofofmainG} we use it to prove Theorems~\ref{t.main2}
and~\ref{t.mainG}, and in Section~\ref{ss.conservative} we give an
application to volume preserving systems.
Noticing that, apart from dynamical coherence, all the hypotheses of
Proposition~\ref{p.generalposition} are robust, we get the following
immediate consequence:

\begin{corollary}\label{c.generalposition}
Suppose $f\in\cP_1^k(N)$ is robustly dynamically coherent and has some periodic
center leaf $\ell$ in general position and such that every strong unstable leaf
intersects $\cW^s(\ell)$. Then the center stable foliation is robustly
absolutely continuous.
\end{corollary}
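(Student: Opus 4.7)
The plan is to deduce Corollary~\ref{c.generalposition} directly from Proposition~\ref{p.generalposition} by showing that, apart from dynamical coherence (which is assumed robust), every hypothesis of that proposition is $C^k$ open. It then suffices to exhibit a $C^k$ neighborhood $\cU$ of $f$ on which partial hyperbolicity, one-dimensional center, existence of some periodic center leaf in general position, and the intersection condition on strong unstable leaves all persist, so that Proposition~\ref{p.generalposition} can be applied to each $g\in\cU\cap\cP_1^k(N)$.

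First, I would invoke the normal hyperbolicity theory of Hirsch, Pugh, Shub~\cite{HPS77}: on a $C^1$ neighborhood of $f$ the center, center stable, and center unstable foliations persist and vary continuously with the map, with leaves remaining $C^k$. In particular the periodic center leaf $\ell$ persists as a periodic center leaf $\ell_g$ of the same period $\kappa$, and any chosen homoclinic leaf $\ell'\subset\cW^{cs}(\ell)\cap\cW^{cu}(\ell)$ persists as a transverse intersection $\ell'_g$ depending continuously on $g$. Condition (a) of general position is a $C^1$ open condition on the one-dimensional map $g^{\kappa}\mid\ell_g$, which depends continuously on $g$, so the attractor $a$ and repeller $r$ persist as hyperbolic periodic points $a_g,r_g$. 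For condition (b), the strong stable and strong unstable holonomies $h^s_g,h^u_g:\ell_g\to\ell'_g$ arise as uniform limits of expressions of the type \eqref{eq.sholonomy}, and hence also depend continuously on $g$; the disjointness $h^s_g(a_g\cup r_g)\cap h^u_g(a_g\cup r_g)=\emptyset$ is therefore open.

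The remaining step, which I expect to be the main obstacle, is to show that the hypothesis ``every strong unstable leaf meets $\cW^s(\ell)=\cW^{cs}(\ell)$'' is itself $C^k$ open. My plan here is a uniform transversality argument: since $\cW^{cs}(\ell)$ is a compact codimension-$d_u$ submanifold uniformly transverse to $\cW^u$ and $N$ is compact, there exists $R>0$ such that for every $x\in N$ the $\cW^u$-ball of radius $R$ about $x$ already meets $\cW^{cs}(\ell)$ transversely. Continuous dependence of $\cW^u$ on the map on compact pieces, together with continuous dependence of $\cW^{cs}(\ell_g)$ on $g$, implies that after possibly shrinking $\cU$ the same transverse intersection inside the $R$-ball around every $x$ persists for every $g\in\cU$. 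Combined with the previous paragraphs this verifies all the hypotheses of Proposition~\ref{p.generalposition} throughout $\cU\cap\cP_1^k(N)$, giving robust absolute continuity of the center stable foliation.
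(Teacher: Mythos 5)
Your overall route is exactly the paper's: the paper derives this corollary from Proposition~\ref{p.generalposition} in one line, by observing that, apart from dynamical coherence (assumed robust), all its hypotheses are open. Your verification of the openness of general position — persistence of $\ell$, of the homoclinic leaf $\ell'$, of the Morse--Smale structure of $f^\kappa\mid\ell$, and of the disjointness $h^s(a\cup r)\cap h^u(a\cup r)=\emptyset$ via continuous dependence of the strong holonomies — is sound.

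The gap is in your third paragraph, on the openness of ``every strong unstable leaf meets $\cW^s(\ell)$''. First, the set in question is $\cW^s(\ell)=\bigcup_{\zeta\in\ell}\cW^s(\zeta)$ (which in this fiber-bundle setting does coincide with the center stable leaf of $\ell$), and it is \emph{not} compact: each strong stable leaf is a non-compact immersed copy of $\RR^{d_s}$, and in the intended applications (transitive base, minimal-type behavior) $\cW^s(\ell)$ is even dense in $N$. Second, and more seriously, the inference is invalid even granting compactness: transversality of a codimension-$d_u$ submanifold to $\cW^u$ together with compactness of $N$ does not produce an $R>0$ such that every $\cW^u$-ball of radius $R$ meets it — nothing in that sentence uses the standing hypothesis that every unstable leaf intersects $\cW^s(\ell)$, without which no such $R$ can exist. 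The correct (and standard) repair: for each $x$, the hypothesis gives a point of $\cW^u(x)\cap\cW^s(\ell)$, hence radii $R(x),K(x)$ with $\cW^u_{R(x)}(x)\cap\cW^s_{K(x)}(\ell)\neq\emptyset$, where $\cW^s_K(\ell)=\bigcup_{\zeta\in\ell}\cW^s_K(\zeta)$ is a \emph{compact} piece; since this intersection is transverse ($E^u\oplus E^{cs}=TN$) and unstable disks of bounded radius vary continuously with the base point, the same holds with radii $R(x)+1,K(x)+1$ for all $y$ in a neighborhood of $x$; compactness of $N$ then yields uniform $R,K$ such that every unstable $R$-ball meets $\cW^s_K(\ell)$ transversely. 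It is this uniform statement that is $C^1$ open, because $\ell_g$, the compact family $\cW^s_{g,K+1}(\zeta)$, $\zeta\in\ell_g$, and unstable disks of radius $R+1$ all depend continuously on $g$. With that substitution your argument goes through and matches the paper's intended deduction.
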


\subsection{Local absolute continuity}\label{ss.localabscont}

The following lemma will allow us to obtain some property of local absolute continuity:

\begin{lemma}\label{l.csbox}
For any ergodic $u$-state $m$ of $f$ and any disk $D$ contained in an
unstable leaf inside $\supp m$, there is a positive measure subset
$\Gamma$ such that the points in $\Gamma$ have (Pesin) stable manifolds
with uniform size. Moreover, these stable manifolds form an absolutely
continuous lamination, in the following sense: there is $K>0$ such that
for any two $u$-disks $D_1$, $D_2$ sufficiently close to $D$, the stable
manifolds of points in $\Gamma$ define a holonomy map between subsets of
$D_1$ and $D_2$, and this is absolutely continuous, with Jacobian between
$1/K$ and $K$.
\end{lemma}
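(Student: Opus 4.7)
The plan is to apply Pesin theory pointwise to points of $D$ whose forward orbits exhibit negative Lyapunov exponents along $E^{cs}$, then to invoke the classical absolute continuity of Pesin stable laminations to obtain the holonomy bounds.

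By the mostly contracting hypothesis \eqref{eq.mostlycontracting}, a positive $\Leb^u$-measure subset $\Gamma_0 \subset D$ satisfies $\limsup \frac{1}{n}\log\|Df^n\mid E^c_x\| < 0$. Together with the uniform exponential contraction of $Df$ along $E^s$, this means all forward Lyapunov exponents along $E^{cs}_x$ are strictly negative at every $x\in \Gamma_0$. Combined with the dominated splitting $TN=E^{cs}\oplus E^u$ coming from partial hyperbolicity, Pesin's local stable manifold construction (\cite{Pe76}) then yields, for each $x\in\Gamma_0$, an embedded local stable manifold $W^s_{loc}(x)$ tangent to $E^{cs}_x$, of dimension $d_{cs}$. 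Exhausting $\Gamma_0$ by a countable family of Pesin blocks (level sets on which the quantitative Pesin estimates hold with uniform constants) and picking one of positive $\Leb^u$-measure, I obtain $\Gamma\subset D$ on which the sizes of $W^s_{loc}(x)$ are uniformly bounded below by some $r_0>0$ and the manifolds depend continuously on $x\in\Gamma$ in the $C^1$ topology.

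For the holonomy bound, I invoke Pesin's absolute continuity theorem (\cite{Pe76,PS89}): on any Pesin block, the holonomies of the Pesin stable lamination between two smooth transversals of dimension complementary to $E^{cs}$ are absolutely continuous, with Jacobians bounded by a constant $K$ depending only on the block. Any $u$-disk has tangent spaces in a narrow cone around $E^u$, hence is automatically transverse to $E^{cs}$, which provides the transversality required for the holonomy to be defined. Provided $D_1, D_2$ are $u$-disks within $C^0$-distance at most $r_0/2$ of $D$, the Pesin stable manifolds $W^s_{loc}(x)$ for $x\in\Gamma$ reach both $D_1$ and $D_2$, and the resulting holonomy between subsets of $D_1$ and $D_2$ has Jacobian in $[K^{-1},K]$.

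The main obstacle is that Pesin's stable manifold and absolute continuity theorems are usually formulated for an $f$-invariant probability measure with negative Lyapunov exponents, whereas no such invariant measure is specified on the non-invariant set $\Gamma_0\subset D$. The resolution is that Pesin's constructions use only pointwise contraction rates along the orbit together with the ambient dominated splitting, both of which are present at every point of $\Gamma_0$; this pointwise application of Pesin theory on $\cW^u$-disks in the mostly contracting setting is exactly the strategy already used in \cite{BoV00}.
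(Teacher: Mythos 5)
There is a genuine gap at the central step. You select $\Gamma_0\subset D$ by the pointwise condition $\limsup_n\frac1n\log\|Df^n\mid E^c_x\|<0$ and then invoke Pesin's stable manifold theorem and Pesin's absolute continuity theorem \cite{Pe76,PS89} "pointwise". Those theorems are proved for Oseledets-regular points of a hyperbolic \emph{invariant} measure: what they really need is tempered control of $\|Df^n\mid E^{cs}_{f^k(x)}\|$ for \emph{all} $k\ge 0$ along the forward orbit (this is what a Pesin block encodes), not just negativity of a limsup at the base point $x$. A limsup condition at $x$ says nothing about how long the contraction takes to re-establish itself at the later points $f^k(x)$, so neither the uniform-size stable disks nor the bounded holonomy Jacobians follow from the cited results. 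Your bridging claim that "Pesin's constructions use only pointwise contraction rates along the orbit" and that this is "exactly the strategy of \cite{BoV00}" is not accurate: Bonatti--Viana apply Pesin theory to the ergodic Gibbs $u$-state itself (a hyperbolic invariant measure) and then transfer the information to unstable disks by iteration and absolute continuity. Moreover, in the generality in which this lemma is used (the section only assumes mostly contracting center, of any dimension), the pointwise route cannot even be started via hyperbolic times: the Pliss lemma needs control of the Birkhoff sums of $\log\|Df\mid E^{cs}\|$, and $\limsup_n\frac1n\log\|Df^n\mid E^{cs}_x\|<0$ does not control those sums when $\dim E^{cs}\ge 2$ (norms of products can contract while sums of log-norms stay positive). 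When $\dim E^c=1$ the argument can be repaired, but only by importing the hyperbolic-times machinery of \cite{ABV00} (Pliss times, uniformly backward-contracting center-stable disks, and an absolute continuity statement for the lamination over a uniformity block), none of which is in your proposal. A symptom of the problem is that your argument never uses the hypotheses that $m$ is a Gibbs $u$-state and that $D\subset\supp m$, which the statement (and its use later) relies on.

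For comparison, the paper's proof stays entirely within classical Pesin theory for the invariant measure: since $m$ is hyperbolic, take a Pesin block $\Lambda$ with $m(\Lambda)>0$, use the Gibbs property to find a $u$-disk $D_0\subset\supp m$ meeting $\Lambda$ in a positive $\Leb$-measure set $D_0^*$ with uniform stable disks, and set $B_0=\cup_{x\in D_0^*}W^s_{\delta_0}(x)$. Then, using that $m$ is the \emph{unique} ergodic $u$-state supported in $\supp m$ (disjointness of supports, Lemma~\ref{l.disjoint}), the Cesaro averages of $(f^i)_*\Leb_D$ converge to $m$, so some $f^{n_0}(D)$ crosses the stable disks over $D_0^*$ and, by the uniform absolute continuity of that lamination, meets $B_0$ in positive measure; finally $\Gamma=D\cap f^{-n_0}(B_0)$ inherits uniform stable disks and bounded holonomy Jacobians by pulling back under the fixed iterate $f^{n_0}$. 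If you want to keep your more direct route, you must replace the citations of \cite{Pe76,PS89} by an actual hyperbolic-times construction (valid here only because $E^c$ is one-dimensional in the applications) together with a proof of absolute continuity for the resulting lamination; otherwise the argument as written does not close.
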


\begin{proof}
Because $f$ has mostly contracting center direction, $m$ is a hyperbolic
ergodic measure of $f$, by Pesin theory, there is a Pesin block $\Lambda$
with positive $m$ measure such that every point $x\in \Lambda$ has uniform
size of stable manifold, and these stable manifolds on $\Lambda$ is
uniformly absolutely continuous. Notice that the stable manifolds are contained
in the center stable leaves. Since $m$ is a $u$-state, there is a disk $D_0$
contained in an unstable leaf inside the support and intersecting $\Lambda$
on a  $m^u$-positive measure subset $D_0^*$. Then the points in $D_0^*$ have
stable manifolds of size bounded below by some $\delta_0>0$.
Denote $B_0=\cup_{x\in D_0^*}W^s_{\delta_0}(x)$. Since $m$ is a $u$-state,
$m(B_0)=a_0>0$. We claim that there is $n_0>0$ such that $(f^{n_0})_*\Leb_D(B_0)\neq 0$.

Let us prove this claim. Let $D_\vep^*$ be the $\vep$-neighborhood of $D_0^*$
inside the corresponding unstable leaf. Denote by
$B_\varepsilon=\cup_{x\in D_\varepsilon^*}W^{cs}_{\delta_0}(x)$,
it is an open set, and $m(B_\varepsilon)\geq a_0>0$.
Because every Cesaro accumulation point of the iterates of
Lebesgue measure on $D$ is a Gibbs $u$-state with support contained
in $\supp m$, and there is a unique ergodic $u$-state with
support contained in $\supp m$, then $m$ is the unique Cesaro
accumulation of the iterates of Lebesgue measure on $D$.
Since $B_\varepsilon$ is open, one has $\lim_{n\rightarrow
\infty}\frac{1}{n}\sum_{i=0}^{n-1}(f^i)_*\Leb_{D}(B_\varepsilon)\geq
m(B_\varepsilon)=a_0$, so there is arbitrarily big $n$ such that
$f_*^{n}(\Leb_D)(B_\varepsilon)>a_0/2$. For $\delta>0$ sufficiently
small, denote by $D_\delta=\{x\in D, d^u(x,\partial (D))\geq
\delta\}$, one has $m^u(D\setminus D_\delta)<a_0/4$. Then there is $y\in
D_\delta$ such that $f^n(y)\in B_\varepsilon$ and $f^n(D)$
contains a disk $D_y$ around $y$ and for any $x\in D_\varepsilon^*$
one has $W^{cs}_{\delta_0}(x)\cap D_y\neq \emptyset$.
Then the stable manifolds of $D_0^*$ define a holonomy map
between $D_0^*$ and $B_0\cap D_y$, by the uniform absolute
continuity of these stable manifolds, $\Leb_{D_y}(D_y\cap B_0)>0$,
then $f_*^{n_0}(\Leb_D)(B_0)>0$. This proves the claim.

This claim implies $\Leb_{f^n(D)}(f^n(D)\cap B^*_0)>0$, let
$\Gamma =D\cap f^{-n_0}(B^*_0)$, then $\Leb_D(\Gamma)>0$, every
point in $\Gamma$ has uniform size of stable manifold, and these
stable manifolds are uniformly absolutely continuous.
\end{proof}

Suppose $f\in \Diff^k(N)$, $k>1$ admits dominated splitting
$E^{cu}\oplus E^{cs}$, and it is dynamically coherent, that is,
it has center stable and center unstable foliation.
We call \emph{$cs$-block} for $f$ the image $\cB=h(\Sigma \times I^{d_{cs}})$
of any embedding $h: \Sigma\times I^{d_{cs}} \to N$, with
$\Sigma\subset I^{d_{cu}}$, satisfying the following properties:
\begin{enumerate}
\item $h(\{a\}\times I^{d_{cs}})$ is contained in $\cW^{cs}(h(a, 0))$, for every $a\in\Sigma$

\item $h(\{a\}\times I^{d_{cs}})$ is contained in the stable set of $h(a, 0)$, for every $a\in \Sigma$

\item $h(\Sigma\times \{0\})$ is a positive measure subset of some disk $D$ transverse to $\cW^{cs}$;


\item there is $K>0$ such that for any $u$-disks $D_1,D_2\subset
N$ which cross $h(\Sigma \times I^u)$, that is, $D_i$ intersects
$h(a\times I^{cs})$ for every $a\in \Sigma$, there is a holonomy
map $h^{cs}$ induced by $\cW^{cs}$ from $D_1\cap
h(a\times I^{cs})$ to $D_2\cap h(a\times I^{cs})$, the
Jacobian of the holonomy map between $\Leb_{D_1}$ and $\Leb_{D_2}$
is bounded by $K$ from above and $1/K$ from below.
\end{enumerate}
We also say that $\cB$ is a $cs$-block over the disk $D$ in (3).
If $D$ is contained in the unstable manifold of an index $d_{cs}$
periodic point $p$, then we say the $cs$-block is associated with p.

\begin{remark}\label{r.csboxpositivemeasure}
If $D$ is in the support of some ergodic Gibbs $u$-state
$m$ then $m(\cB)>0$: this is a consequence of the absolute
continuity property (4) and the fact that Gibbs $u$-states have
positive densities along strong unstable leaves
(Proposition~\ref{p.Gibbsustates}).
\end{remark}

We say that the $cs$-block has size $r>0$ if the \emph{plaque}
$h(\{a\}\times I^{d_{cs}})$ contains $\cW^{cs}_r(h(a, 0))$
for every $a\in \Sigma$.
If a map $\tilde{h}: \Sigma\times I^{d_{cs}}\to N$ satisfies
$$
\tilde{h}\mid {\Sigma\times \{0\}}\equiv h \mid {\Sigma\times\{0\}}
\quand
\tilde{h}(a\times I^{d_{cs}})\subset h(a\times I^{d_{cs}})
$$
for every $a\in\Sigma$ then
$\tilde\cB = \tilde{h}(\Sigma\times I^{d_{cs}})$ is called a sub-block of
$\cB$.

\begin{lemma}\label{l.uniform-csbox}
Let $m$ be an ergodic $u$-state of $f$ and $p\in \supp m$ be a periodic
point of stable index $d_{cs}$ whose stable manifold $\hW^s(p)$ has
size $r$. Then there is a $cs$-block associated with $p$ with size $r$.
\end{lemma}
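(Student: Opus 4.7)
My plan is to realize $\cB$ as a Pesin-stable lamination over a positive Lebesgue measure subset of $\cW^u(p)$, obtained from Lemma~\ref{l.csbox} and then inflated to the required size $r$ by pulling back under a fixed number of backward iterates of $f^\kappa$.

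First I would use that $p\in\supp m$ and that $\supp m$ is $\cW^u$-saturated (Proposition~\ref{p.Gibbsustates}) to fix a small embedded disk $D_0\subset\cW^u(p)\cap\supp m$ through $p$. Lemma~\ref{l.csbox} applied to $D_0$ supplies a positive-measure subset $\Gamma_0\subset D_0$ whose points carry local Pesin stable manifolds of some uniform size $\delta_0>0$, together with a constant $K_0>0$ that bounds the Jacobian of the induced holonomy between $u$-disks sufficiently close to $D_0$. Restricting $\Gamma_0$ to a compact sub-Pesin-block of positive measure, I may further assume that the parameterizations of these stable manifolds depend continuously on the base point, as in standard Pesin theory.

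Next I would choose $n\ge 1$ so large that $f^{-n\kappa}$ has expanded intrinsic lengths along $E^{cs}$ by a factor exceeding $r/\delta_0$ along the relevant orbits; such $n$ exists because $f$ contracts $E^s$ uniformly and contracts $E^c$ asymptotically at Pesin-regular points, so the backward iterates uniformly dilate $E^{cs}$. Since $p$ has period $\kappa$, the disk $D:=f^{-n\kappa}(D_0)$ still lies in $\cW^u(p)$, and $\Sigma:=f^{-n\kappa}(\Gamma_0)\subset D$ has positive Lebesgue measure. For every $a\in\Sigma$ the local Pesin stable manifold now contains the intrinsic ball $\cW^{cs}_r(a)\subset\hW^s(a)$, so I would set $P_a:=\cW^{cs}_r(a)$, pick continuous parameterizations $\psi_a:I^{d_{cs}}\to P_a$ with $\psi_a(0)=a$, and define $h(a,t):=\psi_a(t)$. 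The map $h$ is an embedding since distinct local stable manifolds are disjoint, and it delivers conditions (1)--(3) of the $cs$-block definition along with the size requirement.

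The last step, and the one I expect to be the main obstacle, is condition~(4). For any $u$-disks $D_1,D_2$ crossing $\cB=h(\Sigma\times I^{d_{cs}})$, a point $q_1\in D_1\cap h(\{a\}\times I^{d_{cs}})$ lies in $\cW^{cs}(a)$, so the $\cW^{cs}$-holonomy sends it to the unique point of $\cW^{cs}(a)\cap D_2$, which the crossing hypothesis places in $h(\{a\}\times I^{d_{cs}})\subset\hW^s(a)$; hence on $\cB$ the $\cW^{cs}$-holonomy agrees with the Pesin-stable holonomy of the lamination $\{\hW^s(a):a\in\Sigma\}$. This identification is the delicate point, because it is what allows the absolute continuity encoded in Lemma~\ref{l.csbox} to be transferred to the $\cW^{cs}$-holonomy. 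Once it is in hand, pushing $D_1,D_2$ forward by the fixed iterate $f^{n\kappa}$ lands in a neighborhood of $D_0$ where the Jacobian of the Pesin-stable holonomy is bounded by $K_0$; composing with the bounded distortion of $f^{n\kappa}$ along $u$-disks, which is a fixed constant since $n$ is fixed, yields the single $K>0$ required by the definition of $cs$-block.
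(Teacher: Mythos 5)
Your overall strategy is the same as the paper's: apply Lemma~\ref{l.csbox} to a $u$-disk $D\subset\cW^u(p)$ (legitimate, since $\cW^u(p)\subset\supp m$ by Proposition~\ref{p.Gibbsustates}) and pull the resulting block back by $f^{-n\kappa}$, with condition (4) handled by pushing forward with the fixed iterate and using bounded distortion. The problem is the step that produces the size $r$. You claim that $f^{-n\kappa}$ ``uniformly dilates $E^{cs}$'' because $E^s$ is uniformly contracted and $E^c$ is asymptotically contracted at Pesin-regular points. That does not follow: Pesin regularity of the points of $\Gamma_0$ controls their \emph{forward} orbits, whereas what you need is contraction of $Df^{n\kappa}\mid E^c$ along the orbit segments from $f^{-n\kappa}(y)$ to $y$ for \emph{every} $y$ in the plaques; those segments are backward orbits of plaque points, they need not stay in any Pesin block nor near any region where the center is contracted, and in general there is no uniform backward dilation of $E^c$. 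A telling symptom is that your argument never uses the hypotheses that $p$ has stable index $d_{cs}$ and that $\hW^s(p)$ has size $r$; with your reasoning one would get blocks of arbitrary size, which is false. For instance, if the center leaf through $p$ also carries a repelling periodic point at center distance $r_0<r$ from $p$, then every plaque of $f^{-n\kappa}(\cB)$ is contained in the stable set of its base point, so it can never contain a $cs$-ball of radius exceeding roughly $r_0$: the forward iterate $f^{n\kappa}$ of such a ball would have to land in the original $\delta_0$-plaque, but points near the repeller's stable manifold do not return close to $p$'s fiber attractor.

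The hypothesis on $\hW^s(p)$ is exactly what makes the paper's (terse) argument work, and your proof needs it: since $D\subset\cW^u(p)$ and $p$ is $\kappa$-periodic, the base points $f^{-n\kappa}(a)$ converge \emph{uniformly} to the orbit of $p$; because $p$ has stable index $d_{cs}$, the bundle $E^{cs}$ is uniformly contracted by $f^{\kappa}$ in a neighborhood of the orbit of $p$; and because $\cW^{cs}_r(p)\subset\hW^s(p)$, some fixed iterate $f^{m_0\kappa}$ maps $\cW^{cs}_r(p)$ (hence, by continuity of the center stable plaques, $\cW^{cs}_r(b)$ for every $b$ close to $p$) into that neighborhood, after which the uniform contraction takes over. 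Combining these, for $n$ large one gets $f^{n\kappa}\big(\cW^{cs}_r(f^{-n\kappa}(a))\big)$ inside the $\delta_0$-plaque at $a$, i.e.\ the pulled-back plaque contains $\cW^{cs}_r(f^{-n\kappa}(a))$, uniformly over $a$ in the block. Replace your dilation claim by this argument and the rest of your write-up (conditions (1)--(4)) goes through as you describe.
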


\begin{proof}
By Lemma~\ref{l.csbox}, there is a $cs$-block over any $u$-disk
$D\subset\cW^u(p)$. Let $\kappa$ be the period of $p$.
For every large $n$, the backward image $f^{-n\kappa}(\cB)$
is a $cs$-block of size $r$ over the $u$-disk $f^{-n}(D)$.
\end{proof}

\subsection{Recurrence to $cs$-blocks}\label{ss.recurrence}

The next proposition is a key ingredient in the proof of our criteria for absolute continuity.

\begin{proposition}\label{p.csboxsection}
Let $m_i$, $i=1, \dots, s$ be the ergodic Gibbs $u$-states of $f$ and
$B_i$, $i=1, \dots, n$ be $cs$-blocks over $\cW^u$-disks $D_i\subset\supp m_i$.
Then for any positive Lebesgue measure subset $D^*$ of any $\cW^u$-disk $D$,
there exists $n>0$ arbitrarily large and there exists $1\leq i\leq s$ such that
$\Leb_{D}(D^* \cap f^{-n}(B_i))>0$.
\end{proposition}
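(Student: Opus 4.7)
The plan is to argue by contradiction using the standard Gibbs $u$-state machinery for Cesaro averages, combined with the absolute continuity built into the definition of a $cs$-block. Suppose $\Leb_D(D^* \cap f^{-n}(B_i)) = 0$ for every $n \ge 1$ and every $i$. Let $\tilde\mu = \Leb_D(D^*)^{-1}\Leb_{D^*}$ and form $\bar\mu_n = n^{-1}\sum_{k=0}^{n-1} f^k_*\tilde\mu$. Since $\tilde\mu$ is absolutely continuous with respect to $\Leb$ on the unstable disk $D$, the distortion estimates along strong unstable plaques underlying Lemma~11.12 of \cite{Beyond} apply to this starting measure, so every weak-$*$ accumulation point $m$ of $(\bar\mu_n)$ is a Gibbs $u$-state. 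Under the mostly contracting hypothesis, $m = \sum_{i} c_i m_i$ is a convex combination of the ergodic Gibbs $u$-states with some $c_{i_0} > 0$; by Remark~\ref{r.csboxpositivemeasure}, $m_{i_0}(B_{i_0}) > 0$ and hence $m(B_{i_0}) \ge c_{i_0} m_{i_0}(B_{i_0}) > 0$.

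The technical heart of the proof is to convert $m(B_{i_0}) > 0$ into a positive lower bound for $\bar\mu_n(B_{i_0})$ for infinitely many $n$, despite $B_{i_0}$ not being open in general. I would construct an auxiliary open ``$cs$-tube'' $U$ with two properties: (i) $m(U) > 0$, so lower semicontinuity of measure on open sets under weak-$*$ convergence yields $\liminf_n \bar\mu_n(U) \ge m(U) > 0$; and (ii) any $u$-disk meeting $U$ in a set of positive $d_u$-Lebesgue measure must also meet $B_{i_0}$ in positive Lebesgue measure. The construction of $U$ proceeds by choosing a compact subset $\Sigma' \subset \Sigma$ of positive $d_{cu}$-Lebesgue measure (by inner regularity) and taking $U$ to be the union of open $cs$-plaques through a slightly thickened open neighborhood of $\Sigma'$ inside $D_{i_0}$, all lying inside the full $cs$-plaques that define $B_{i_0}$. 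Property~(i) follows from the fact that $m_{i_0}$ has positive Lebesgue densities along strong unstable leaves (Proposition~\ref{p.Gibbsustates}); property~(ii) is a consequence of the continuity of $\cW^{cs}$ (which ensures a $u$-disk meeting $U$ locally crosses a genuine sub-block over an open subset of $\Sigma'$) together with the absolute continuity condition~(4) in the definition of $cs$-block, which then forces the $u$-disk to intersect the sub-block itself in positive Lebesgue measure.

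With $U$ in hand, lower semicontinuity gives arbitrarily large $k$ with $f^k_*\tilde\mu(U) > 0$, i.e.\ $f^k(D^*) \cap U$ has positive $d_u$-Lebesgue measure along $f^k(D)$. Property~(ii) then promotes this to $\Leb_{f^k(D)}(f^k(D^*) \cap B_{i_0}) > 0$, and pulling back by the diffeomorphism $f^k \mid D$ contradicts the assumption and also shows $n$ may be chosen arbitrarily large. The main obstacle is the geometric construction of $U$ and the verification of the forced-crossing property, which is a delicate measure-theoretic step relying on inner regularity of $\Sigma$ and on the absolute continuity condition~(4); once that step is settled, the rest is a routine application of the weak-$*$ theory of Gibbs $u$-states.
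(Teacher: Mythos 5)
There is a genuine gap at the final step, and it is precisely the main difficulty of this proposition. Your property (ii) is formulated (and can only be proved) for entire $u$-disks: if a $u$-disk enters the open tube $U$, continuity of $\cW^{cs}$ forces it to cross the sub-block over $\Sigma'$, and condition (4) in the definition of a $cs$-block then gives positive measure intersection of that \emph{disk} with $B_{i_0}$. But in the last paragraph you apply (ii) to $f^k(D^*)$, which is not a $u$-disk; it is an arbitrary positive-measure subset of the disk $f^k(D)$. For such a set the implication fails: the base $\Sigma$ of the block is only a positive-measure subset of $D_{i_0}$ (coming from a Pesin block, Lemma~\ref{l.csbox}) with no interior, so along any crossing $u$-disk the trace of $B_{i_0}$ is a positive-measure set that is typically far from full inside the trace of the open set $U$; nothing prevents $f^k(D^*)$ from putting all of its mass in $(\text{disk})\cap U\setminus B_{i_0}$. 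So from $\Leb_{f^k(D)}(f^k(D^*)\cap U)>0$ you cannot conclude $\Leb_{f^k(D)}(f^k(D^*)\cap B_{i_0})>0$, and your contradiction does not materialize. Weak$^*$ lower semicontinuity on the open set $U$, which is the engine of your argument, only ever controls the intersection of $D^*$ with $f^{-k}(U)$, never with $f^{-k}(B_{i_0})$ itself.

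The paper's proof contains exactly the extra mechanism needed to bridge this. It thickens $D^*$ to the open set $D^*_\vep\subset D$, so that large iterates $f^{n_j}(D^*_\vep)$ contain genuine $u$-disks $D_x$ of uniform size around points of $f^{n_j}(D^*_{\vep,\vep_1})$ that hit the open enlargement $B^*_{1,\delta}$ of the block; for each such crossing disk the ratio $\Leb(D_x\cap B_1)/\Leb(D_x\cap B^*_{1,\delta})$ is bounded below (by condition (4)), and bounded distortion of $f^{-n_j}$ along unstable leaves transfers this to a \emph{uniform proportion} $b_1$ of $D^*_\vep$ lying in $f^{-n_j}(B_1)$; finally, since $b_1$ does not depend on $\vep$ and $\Leb(D^*_\vep\setminus D^*)\to 0$, the conclusion passes to $D^*$ itself. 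Without some version of this quantitative thickening-and-distortion step (or another device yielding a proportion of the block inside iterates of $D^*$ at a definite scale), positive measure in the open tube cannot be promoted to positive measure in the block, so your argument as written does not prove the proposition.
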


\begin{proof}
(For notational simplicity, we use $m^u$ to denote $\Leb_{f^n(\Gamma)}$ for any
$u$-disk $\Gamma$ and any $n>0$.)
Let $D^*_\vep=\cup_{x\in D^*}B_\vep(x,D)$ where
$B_\vep(x,D)$ is the ball in $D$ with radius $\vep$ and center in
$x$, and $m$, $m_\vep$ be Cesaro accumulation points of the
iterates of Lebesgue measure on $D^*$ and $D^*_\vep$ respectively,
such that there is $\{n_j\}_{j=1}^\infty$ satisfying
$$
\lim_{j\to\infty}\frac{1}{n_jm^u(D^*)}\sum_{i=0}^{n_j-1}(f^i)_*m^u\mid {D^*}=m;
$$
$$
\lim_{j\to\infty}\frac{1}{n_jm^u(D^*_\vep)}\sum_{i=0}^{n_j-1}(f^i)_*m^u\mid {D^*_\vep}= m_\vep.
$$
Then they are $u$-states, denote by $m=a_1 m_1+\cdots+a_m m_s$
the ergodic decomposition of $m$, suppose $a_1\neq 0$.
For $\vep$ sufficiently small, $m^u(D^*_\vep)\approx m^u(D^*)$,
and then $m_\vep \approx m$, denote by
$m_\vep=a_{1,\vep} m_1+\cdots+a_{s,\vep} m_m$ the ergodic
decomposition of $m_\vep$, one has $a_{1,\vep}\approx a_1$.
Denote $D_1^*=D_1\cap B_1$ and
$D_{1,\delta}^*=\cup_{x\in D_1^*}B^u_\delta(x)$ and
$$
B_{1,\delta}^*=\{z: z\in \cW^{cs}_{loc}(x)\cap
\cW^u_{loc}(y) \text{ for } x\in D_{1,\delta}^* \text{ and } y\in B_1\}.
$$
Then there is $K_1>0$ such that for any $u$-disk $\Gamma$ which crosses
$B_{1,\delta}^*$, one has
$$\frac{m^u(\Gamma\cap B_1)}{m^u(\Gamma\cap
B_{1,\delta}^*)}>K_1,$$ that is because $m^u(\Gamma\cap
B_1)>\frac{1}{K}m^u(D_1\cap B_1)>0$ and $m^u(\Gamma\cap
B_{1,\delta}^*)$ is bounded above, where $K$ is the bound for the
Jacobian of the center stable foliation in $B_1$.
We can choose $\vep$ properly such that $m^u(\partial(D^*_{\vep}))=0$,
by Remark~\ref{r.csboxpositivemeasure}, suppose $m_1(B_1)=b_0>0$.
Because $B_{1,\delta}^*$ is open,
$$
\lim_{j\to\infty}\frac{1}{n_jm^u(D^*_\vep)}\sum_{i=0}^{n_j-1}
   \big(f^i_*m^u \mid D\big)\big(f^i(D^*_\vep)\cap B_{1,\delta}^*\big)\ge m_\vep(B_{1,\delta})
   \gtrsim b_0 a_\vep > \frac{a_1 b_0}{2}.
$$
So there is $n_j$ arbitrarily big such that
$$
\big(f^{n_j}_*m^u \mid D\big)\big(f^{n_j}(D^*_\vep)\cap B_{1,\delta}^*\big)
\ge \frac{a_1 b_0}{4} m^u(D_\vep^*).
$$
We claim that there is $b_1>0$ such that, for every $\vep>0$ sufficiently small,
$$
\big(f^{n_j}_*m^u \mid D\big) \big(f^{n_j}({D^*_\vep})\cap B_1\big)
\ge 2b_1 m^u(D_\vep^*).
$$
Let us prove the claim. For $\vep_1<\vep$, denote
$D^*_{\vep,\vep_1}=\{x\in D^*_\vep;d_D(x,\partial(D^*_\vep))>\vep_1\}$.
Then, for $\vep_1$ sufficiently small, one has $m^u({D^*_{\vep,\vep_1}})>m^u(D^*_\vep)-a_1b_0/8$.
It follows that
$$
\big(f^{n_j}_*m^u \mid D\big) \big(B_{1,\delta}^*\big)
\ge \frac{a_1 b_0}{8} m^u(D^*_\vep)
$$
and for any $x\in f^{n_j}(D^*_{\vep,\vep_1})\cap B_{1,\delta}^*$,
there is a $u$-disk $D_x\subset f^{n_j}(D^*_\vep)$ containing $x$
such that $D_x\cap \cW^{cs}_{loc}(y)\neq \emptyset$ for any $y\in
D_{1,\delta}^*$. Since
$$
\frac{m^u(D_x\cap B_1)}{m^u(D_x\cap B_{1,\delta}^*)}>K_1,
$$
and the distortion property on $\cW^u$, there is $K_2>0$ such that
$$
\frac{m^u(f^{-n_j}(D_x\cap B_1))}{m^u(f^{-n_j}(D_x\cap B_{1,\delta}^*))}>K_2.
$$
Then, taking $b_1={K_2a_1b_0}/{16}$, we get our claim:
$$
m^u(D^*_\vep\cap f^{-n_j}(B_1))>K_2m^u(D^*_{\vep,\vep_1}\cap f^{-n_j}(B_{1,\delta}^*))
> 2 b_1 m^u(D^*_\vep),
$$
Since $\lim_{\vep\to 0}m^u(D^*_\vep\setminus D^*)=0$, this proves that
$m^u(D^*\cap f^{-n_j}(B_1))>b_1 m^u(D^*)$. This completes the proof of the proposition.
\end{proof}

\begin{remark}\label{r.lowerbound}
Assuming there exists a unique Gibbs $u$-state, the arguments in the proof of Proposition~\ref{p.csboxsection}
yield a slightly stronger conclusion that will be useful in the sequel: there exists $b_1>0$ such that
for any positive Lebesgue measure subset $D^*$ of any $\cW^u$-disk $D$ and for any $1 \le i \le s$ there exist
arbitrarily large values of $n>0$ such that $\Leb_{D}(D^* \cap f^{-n}(B_i))\ge b_1 \Leb_D(D^*)$.
\end{remark}

\subsection{Upper leafwise absolute continuity}\label{ss.upperleafwise}

Here we prove Proposition~\ref{p.mainF}. Suppose there exists some measurable
set $Y$ with $\Leb(Y)>0$ that meets almost every center stable leaf $\cW^{cs}(z)$
on a zero $\Leb^{cs}$-measure subset. Up to replacing $Y$ by some full measure
subset, we may suppose that every $x\in f^n(Y)$ is a Lebesgue density point
of $f^n(Y)$ for every $n\geq 0$:
\begin{equation}\label{eq.dual}
\lim_{\rho\to 0}\frac{\Leb(B_\rho(x) \cap f^n(Y))}{\Leb(B_\rho(x))}=1.
\end{equation}
Since $f$ has finitely many ergodic $u$-states and their basins cover a full
measure subset of $N$ (see \cite{BoV00}), it is no restriction to suppose that $Y$
is contained in the basin of some ergodic Gibbs $u$-state $m$.
Let $\cB$ be a $cs$-block over some $u$-disk contained in the support of $m$
(recall Proposition~\ref{p.Gibbsustates} and Section~\ref{ss.localabscont}).
Since the strong unstable foliation is absolutely continuous (see \cite{BP74}), we
can find a $u$-disk $D$ such that $D^* = D \cap Y$ has positive $\Leb_D$-measure.
By Proposition~\ref{p.csboxsection}, there exists $n > 0$ such that
$\Leb_{f^n(D)}(f^n(D^*) \cap \cB)>0$. Take $y \in D^*$ such
that $f^n(y)\in \cB$ and $f^n(y)$ is a Lebesgue density point for
$f^n(D^*) \cap\cB$ inside $f^n(D)$. Then, for every small $\rho>0$,
$$
\frac{\Leb_{f^n(D)} \big(B^u_\rho(f^n(y)) \cap \cB\big)}
     {\Leb_{f^n(D)} \big(B^u_\rho(f^n(y)\big)}
\ge \frac{\Leb_{f^n(D)} \big(B^u_\rho(f^n(y)) \cap f^n(D^*) \cap \cB\big)}
         {\Leb_{f^n(D)} \big(B^u_\rho(f^n(y)\big)} \approx 1,
$$
where $B^u_\rho(x)$ denotes the connected component of $B_\rho(x) \cap \cW^u(x)$
that contains $x$. Then, since the center stable foliation is uniformly
absolutely continuous on the $cs$-block, there exists $c>0$ such that
$$
\frac{\Leb\big(B_\rho(f^n(y)) \cap \cB\big)}
     {\Leb\big(B_\rho(f^n(y)\big)}
     \ge c \quad\text{for all small } \rho>0.
$$
Together with \eqref{eq.dual}, this implies that $\Leb(f^n(Y) \cap \cB) > 0$.
On the other hand, the hypothesis implies that $f^n(Y)$ intersects almost
every center stable leaf on a zero Lebesgue measure subset. Using, once more,
that the center stable leaf is absolutely continuous on the $cs$-block,
we get that $\Leb(f^n(Y) \cap \cB)=0$. This contradicts the previous conclusion,
and that contradiction completes the proof of Proposition~\ref{p.mainF}.

\subsection{Non-expansion along the center}\label{ss.nonexpansion}

Now we prove Proposition~\ref{p.nonexpansion}.
Let the ergodic $u$-states $\{ m_i\}_{i=1}^m$, periodic points
$\{p_i\}_{i=1}^m,$ and $\rho,\vep$ given in the definition of
non-expansion along the center. By Lemma~\ref{l.uniform-csbox},
we can choose $cs$-blocks $\{\cB_i\}_{i=1}^m$ associated with $m_i$
with size $\rho$.

In order to prove the center stable foliation is absolutely
continuous, we just need show that for any two $u$-disks $D_1,D_2$
which are $\vep$ near, the holonomy map induced by $\cW^{cs}$
between $D_1$ and $D_2$ maps Lebesgue positive measure subset to a
Lebesgue positive measure subset, where two $u$-disks $D_1,D_2$ are
$\vep$ near if for any $x\in D_1$, there is $y\in D_2$ belonging
to $\cW^{cs}_{\vep} (x)$.

Suppose $D_1^*\subset D_1$ is a positive measure subset, denote by
$D_2^*\subset D_2$ the image of $D_1^*$ under cs-holonomy map.
Since $f$ is non-expanding along the center, we can assume that
for any $x\in D_1^*$, one has $f^n(\cW^{cs}_\vep(x))
\subset\cW^{cs}_\rho(f^n(x))$ for $n>0$. Choose $\tilde{B}_i$
a sub-block of $B_i$ with arbitrarily small size, then by
Proposition~\ref{p.csboxsection}, there is $n$ and $j$ such that
$m^u(f^n(D_1^*)\cap \tilde{B}_j)>0$. Because $f^n(D_1^*)$
and $f^n(D_2^*)$ are $\rho$ near, and the cs-holonomy map in $B_i$
is absolutely continuous, one has that $m^u(f^n(D_2^*)\cap
B_j)>0$, this implies $m^u(D_2^*)>0$, so $\cW^{cs}$ is absolutely
continuous.

\subsection{Center leaves in general position}\label{ss.newcriterion}

We are going to prove Proposition~\ref{p.generalposition}. Let us start by giving an
overview of the argument. We need to compare a set on any $u$-disk with its
projection to another $u$-disk under $cs$-holonomy. The idea is to consider
appropriate iterates of both $u$-disks intersecting a given $cs$-block, and then
take advantage of the uniform structure on the $cs$-block. The problem is that,
because $cs$-blocks have gaps along the center direction, one can not immediately
ensure that iterates of both disks intersect the same $cs$-block. To this end,
we use the twisting property in the assumption of general position to find a pair of
$cs$-blocks whose union covers the whole center direction, in the sense that it
intersects any large iterate of any $u$-disk. Then, we show that some iterate of
any of the disks intersects both $cs$-blocks, which gives the required property.

Now we fill in the details in the proof. Let $f$ and $\ell$ be as in the statement
of  the proposition. For simplicity, consider the center leaf $\ell$ to be fixed
(in other words, $\kappa=1$) and we also take the attractor $a$ and repeller $r$
of $f\mid\ell$ to be fixed. Extension to the general case is straightforward.

\begin{lemma}
The diffeomorphism $f$ has a unique ergodic $u$-state and its support contains
the attractor $a$.
\end{lemma}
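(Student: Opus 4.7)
I would split the argument into two steps: first show that the attractor $a$ lies in $\supp m$ for every ergodic $u$-state $m$ of $f$, and then deduce uniqueness from Lemma~\ref{l.disjoint}, which applies because the standing hypothesis of the section is that $f$ has mostly contracting center direction.

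For the first step, I would fix any ergodic $u$-state $m$ and any point $x \in \supp m$. By Proposition~\ref{p.Gibbsustates}, $\supp m$ is closed, $f$-invariant, and $\cW^u$-saturated, so $\cW^u(x) \subset \supp m$. The hypothesis that every strong unstable leaf intersects $\cW^s(\ell)$ furnishes some $y \in \cW^u(x) \cap \cW^s(\ell) \subset \supp m$, and this $y$ lies on $\hW^s(z)$ for a unique $z \in \ell$. Since $f \mid \ell$ is Morse--Smale with $a$ the unique attractor and $r$ the unique repeller, the forward orbit of $z$ converges to $a$ whenever $z \neq r$; in that case $f^n(y) \to a$, so $a \in \supp m$ by closedness. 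The delicate case is $z = r$, where one only obtains $r \in \supp m$ and hence $\cW^u(r) \subset \supp m$. To push through, I would invoke condition~(b) of general position: for the homoclinic leaf $\ell'$ appearing there, set $p := h^u(r) \in \cW^u(r) \cap \ell' \subset \supp m$. Since $\ell' \subset \cW^s(\ell)$ we have $p \in \cW^s(\ell)$; and because $\hW^s(r) \cap \ell' = \{h^s(r)\}$ while condition~(b) forces $h^u(r) \neq h^s(r)$, the point $p$ does \emph{not} lie in $\hW^s(r)$. Consequently $p$ belongs to the strong stable manifold of some $z' \in \ell \setminus \{r\}$, and the previous case applied to $p$ gives $f^n(p) \to a$, so $a \in \supp m$ also in this subcase.

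For the second step, Lemma~\ref{l.disjoint} guarantees that the supports of distinct ergodic $u$-states are pairwise disjoint; since every such support contains $a$, there can be only one ergodic $u$-state. The main obstacle I anticipate is exactly the repeller case $z = r$ of step one, where naive orbit-shadowing only lands us on $r$; condition~(b) is tailor-made to promote this to containment of $a$ via the identity $\hW^s(r) \cap \ell' = \{h^s(r)\}$ and the transverse intersection of $\cW^u(r)$ with $\hW^s(a)$ that it encodes.
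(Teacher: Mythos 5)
Your proof is correct and follows essentially the same route as the paper: uniqueness is reduced to Lemma~\ref{l.disjoint} (disjointness of supports, valid under the section's standing mostly contracting hypothesis), and $a\in\supp m$ is obtained from the assumption that every strong unstable leaf meets $\cW^s(\ell)$, with the general position condition $h^u(r)\neq h^s(r)$ resolving the repeller case. The only (harmless) difference is in that last case: the paper lets forward iterates of the unstable leaf accumulate on $\cW^u(r)$ and hence on $h^u(r)\in\hW^s(a)$, whereas you place $h^u(r)$ directly in $\supp m$ via $\cW^u$-saturation of the support and then push it forward to $a$ — a slightly more economical variant of the same idea.
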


\begin{proof}
By Lemma~\ref{l.disjoint}, the supports of all ergodic Gibbs $u$-states are pairwise disjoint.
Thus, it suffices to show that the support of any ergodic $u$-state contains $a$.
By Proposition~\ref{p.Gibbsustates}, the support of $m$ consists of entire unstable leaves.
So, it suffices to prove that every strong unstable leaf intersects the stable manifold
$\hW^s(a)$ of the attractor. By hypothesis, every strong unstable leaf intersects $\cW^s(\ell)$.
Moreover, $\cW^s(\ell)$ is the union of $\hW^s(a)$ with the strong stable leaf through the
repeller $r$. If a strong unstable leaf $L$ intersects $\cW^s(r)$ then its forward orbit
accumulates on $\cW^u(r)$ and, in particular, on $h^u(r)$. Since $\ell$ is in general position,
$h^u(r) \neq h^s(r)$ and so $h^u(r)$ belongs to the stable manifold of $a$.
Hence, in any case, $L$ does intersect $\hW^s(a)$. This completes the argument.
\end{proof}

Consider the four points $a_s=h^s(a)$, $a_u=h^u(a)$, $r_s=h^s(r)$, $r_u=h^u(r)$ in $\ell'$.
For $\rho$, $\vep>0$ small, and $\zeta\in\ell'$, denote
$$
\cW_{\rho}^s(\ell')=\cup_{\xi\in\ell'}\cW^s_{\rho}(\xi)
\quand
V^{cs}_{\vep}(\zeta)=\cup_{\xi\in B^c_\vep(\zeta)}\cW^s_{\rho}(\xi).
$$
Let $\tilde\cB$ be a $cs$-block over $\cW^u_{loc}(a)$ (Lemma~\ref{l.uniform-csbox}).
Then for $n$ large, $f^{-n}(\tilde\cB)$ intersects $\cW^u_{loc}(a_s)$ in a set $\tilde{D}_1^*$
with positive Lebesgue measure, and we may choose a $cs$-block $\cB_1\subset f^{-n}(\tilde\cB)$
over a $u$-disk $\tilde{D}_1 \supset \tilde{D}_1^*$
such that
$$
\cW^u_{2\tau}(\zeta) \cap \cB_1 \neq \emptyset
\quad\text{for all }
\zeta\in\cW^s_{\rho}(\ell')\setminus V^{cs}_\vep(r_s).
$$
We think of the union $W^u_{2\tau}(V^{cs}_\vep(r_s))$ of the local unstable manifolds through
the local center stable manifold of $r_s$ as the gap of $\cB_1$ along the center direction.
See Figure~\ref{f.compact}.

\begin{figure}[h]
\begin{center}
\psfrag{a}{$a$}\psfrag{r}{$r$}\psfrag{as}{$a_s$}\psfrag{rs}{$r_s$}\psfrag{au}{$a_u$}\psfrag{ru}{$r_u$}
\psfrag{L}{$\ell$}\psfrag{L2}{$\ell'$}\psfrag{s}{{\small $W^s$}}\psfrag{u}{{\small $W^u$}}
\psfrag{Q}{{\small $N/\cW^c$}}
\includegraphics[height=2in]{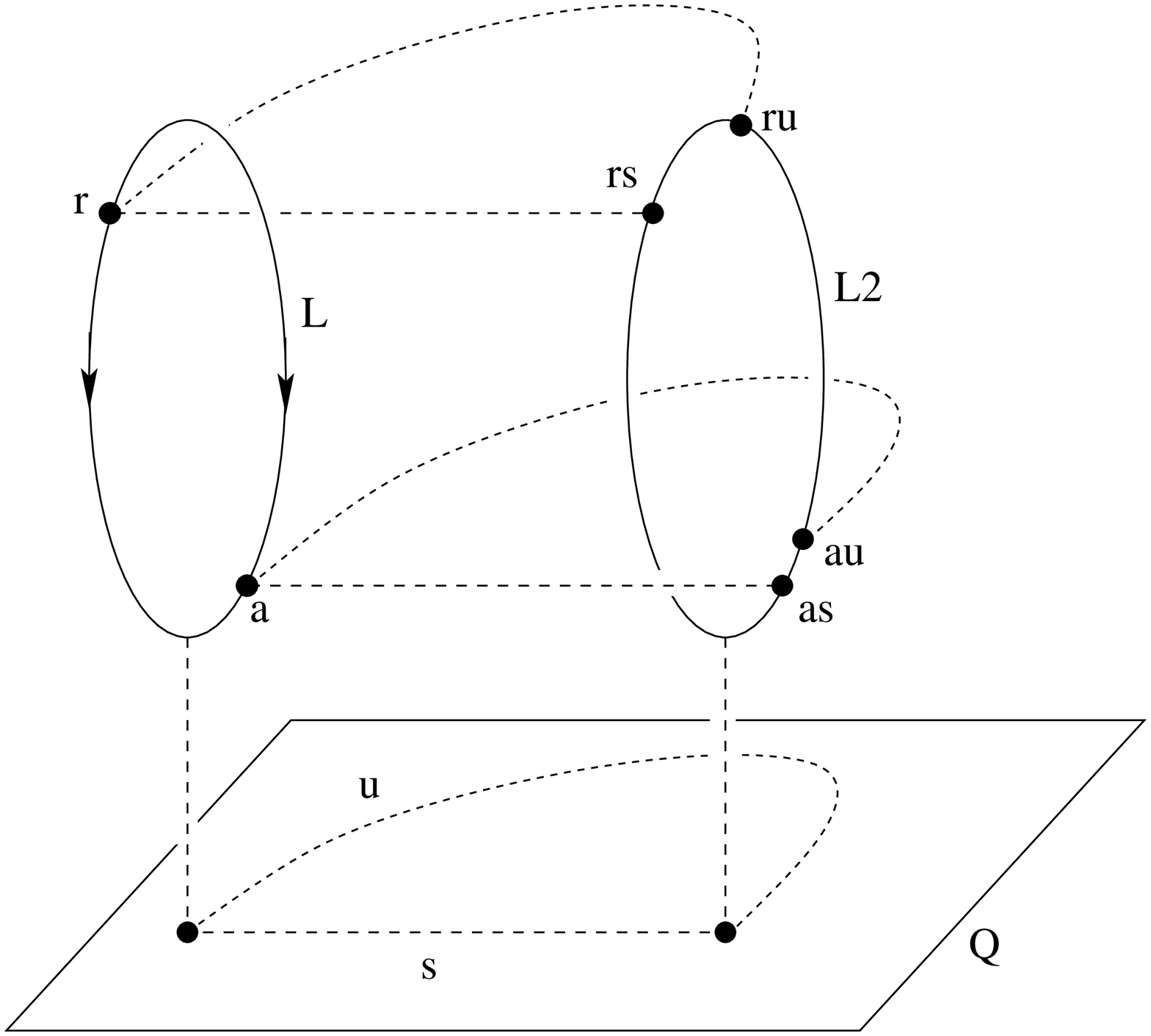}
\caption{\label{f.compact}}
\end{center}
\end{figure}

Dually, consider a $cs$-block $\cB_2\subset f^{-n}(\tilde\cB)$ over a $u$-disk
$\tilde{D}_2\subset\cW^u(a)$ such that
$$
\cW^u_{2\tau}(\zeta)\cap \cB_2\neq \emptyset
\quad\text{for all }
\zeta\in \cW^s_{\rho}(\ell')\setminus V^{cs}_\vep(r_u).
$$
Again, the union $W^u_{2\tau}(V^{cs}_\vep(r_u))$ of the local unstable manifolds through
the local center stable manifold of $r_u$ is the gap of $\cB_2$ along the center direction.
Moreover, we may fix $\delta_0>0$ such that, for any $\zeta\in \cW^s_{\rho}(\ell')$,
either
$$
m^u(\cW^u_{2\tau}(\zeta)\cap \cB_1)>\delta_0 \quad\text{or}\quad m^u(\cW^u_{2\tau}(\zeta)\cap \cB_2)>\delta_0.
$$
This is, in precise terms, what we meant when we announced that the union $\cB_1 \cup \cB_2$ of
the two $cs$-blocks would cover the whole center direction.

Now consider a new $cs$-block $\cB$ defined as the union of
$$
(\cW^u_{2\tau}(\xi)\cap \cB_2)\cup(\cW^u_{2\tau}(\xi)\cap \cB_1)
$$
over all $\xi\in W_\rho^s(\ell') \setminus \big(V^{cs}_\vep(r_s) \cup V^{cs}_\vep(r_u)\big)$.
In other words, $\cB$ is obtained from $\cB_1 \cup \cB_2$ by removing the two gaps.
Thus, $\cB=\cB^1\cup\cB^2$ with $\cB^1\subset\cB_1$ and $\cB^2\subset\cB_2$.
We are going to show that arbitrarily large iterates of any $u$-disk intersect both connected
components of $\cB$ on positive measure subsets.

\begin{lemma}\label{l.lem2}
Given any $u$-disk $D$ and any positive $\Leb_D$-measure subset $D^*$ there exists
$\zeta\in D^*$ and $k$ arbitrarily large such that
$$
\Leb_{f^{k}(D)}\big(\cW^u_{2\tau}(f^n(\zeta))\cap f^{k}(D^*)\cap \cB^i\big)>0
\quad\text{for both } i=1, 2.
$$
\end{lemma}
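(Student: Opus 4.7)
The plan is to apply Proposition~\ref{p.csboxsection} to the compound block $\cB=\cB^1\cup\cB^2$ and then to exploit the $\delta_0$ lower bound along each fiber of $\cB$, transported via $cs$-holonomy, in order to extract both $\cB^1$ and $\cB^2$ from a single density point of $f^k(D^*)\cap\cB$ inside $f^k(D)$. The preceding lemma guarantees that $f$ has a unique ergodic Gibbs $u$-state, so Remark~\ref{r.lowerbound} applies: there is $b_1>0$ and arbitrarily large $k$ with $\Leb_D(D^*\cap f^{-k}(\cB))\ge b_1 \Leb_D(D^*)$. I would fix such a $k$, taken large enough that uniform unstable expansion ensures $\cW^u_{2\tau}(x)\subset f^k(D)$ for every $x$ in a small neighborhood of any point of $f^k(D^*)\cap\cB$, and then choose a Lebesgue density point $f^k(\zeta)$ of $f^k(D^*)\cap\cB$ inside $f^k(D)$.

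By construction, $f^k(\zeta)$ lies in some plaque $\cW^{cs}_\rho(\xi_0)$ of the $cs$-block $\cB$, with base point $\xi_0\in\cW^s_\rho(\ell')\setminus(V^{cs}_\vep(r_s)\cup V^{cs}_\vep(r_u))$; by the choice of $\cB_1,\cB_2$ and the definition of $\delta_0$, both $\cW^u_{2\tau}(\xi_0)\cap\cB^1$ and $\cW^u_{2\tau}(\xi_0)\cap\cB^2$ have Lebesgue measure at least $\delta_0$. The $cs$-block property (item (4) in the definition of $cs$-block) then supplies a $cs$-holonomy $h^{cs}\colon\cW^u_{2\tau}(\xi_0)\to\cW^u_{2\tau}(f^k(\zeta))$ with Jacobians uniformly bounded by some constant $K>0$. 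Because each $\cB^i$ is built as a union of $cs$-plaques drawn from the $cs$-block $\cB_i$, it is itself a $cs$-saturated sub-block of $\cB$, and hence $h^{cs}$ sends $\cW^u_{2\tau}(\xi_0)\cap\cB^i$ into $\cW^u_{2\tau}(f^k(\zeta))\cap\cB^i$. This gives
\[
\Leb_{\cW^u_{2\tau}(f^k(\zeta))}\bigl(\cW^u_{2\tau}(f^k(\zeta))\cap\cB^i\bigr)\ge \delta_0/K \quad\text{for } i=1,2,
\]
and combining with the density point property at $f^k(\zeta)$ yields the desired $\Leb_{f^k(D)}(\cW^u_{2\tau}(f^k(\zeta))\cap f^k(D^*)\cap\cB^i)>0$ for both $i$.

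The main obstacle is the simultaneous control of $\cB^1$ and $\cB^2$: applying Proposition~\ref{p.csboxsection} to $\cB_1$ and $\cB_2$ separately would, a priori, only produce distinct return times $k_1\neq k_2$, whereas the lemma demands a single $k$ that works for both blocks. The whole purpose of the compound $cs$-block $\cB$, whose two halves each meet every central fiber on a set of measure at least $\delta_0$ (thanks to the general-position assumption, which places the $\cB_1$- and $\cB_2$-gaps $V^{cs}_\vep(r_s)$ and $V^{cs}_\vep(r_u)$ at disjoint locations of $\ell'$), is exactly to reduce the problem to a single application of Proposition~\ref{p.csboxsection} and then to split the resulting positive-measure intersection into two using the uniform $cs$-holonomy structure of $\cB$.
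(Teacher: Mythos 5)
Your setup (unique Gibbs $u$-state, the recurrence estimate of Proposition~\ref{p.csboxsection} and Remark~\ref{r.lowerbound}, and the fact that every fiber $\cW^u_{2\tau}(\xi)$ through a base point avoiding both gaps meets $\cB^1$ and $\cB^2$ with measure bounded below) is in line with the paper, but the final step does not prove the lemma. What has to be shown is that the iterate $f^{k}(D^*)$ --- not merely the fiber $\cW^u_{2\tau}(f^k(\zeta))$ --- meets both $\cB^1$ and $\cB^2$ in positive measure on that fiber. Your argument produces (i) a Lebesgue density point $f^k(\zeta)$ of $f^k(D^*)\cap\cB$ and (ii) the bound $\Leb\big(\cW^u_{2\tau}(f^k(\zeta))\cap\cB^i\big)\ge\delta_0/K$ for both $i$; but (ii) involves only the fiber and the blocks, not $f^k(D^*)$, while (i) is an infinitesimal-scale statement: it controls $f^k(D^*)\cap\cB$ only on balls of radius tending to zero around $f^k(\zeta)$, so at best it gives positive measure of $f^k(D^*)\cap\cB^i$ for the single index $i$ whose plaques pass through a small neighborhood of $f^k(\zeta)$. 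The part of the fiber crossing the other block sits at a fixed macroscopic distance (up to $2\tau$) from $f^k(\zeta)$, and nothing in a density-point argument forces $f^k(D^*)$, which is just some subset of the long disk $f^k(D)$, to have positive measure there --- it may miss that region entirely. So ``combining with the density point property'' is a non sequitur, and it is precisely the difficulty the lemma addresses: you correctly name the simultaneous control of $\cB^1$ and $\cB^2$ as the main obstacle, but the compound block and the $cs$-holonomy inside it only show that the \emph{fiber} meets both blocks, which is not enough.

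The paper closes this gap with a quantitative distortion argument carried out back in $D$: for each $y\in D^*_\rho\cap f^{-n_i}(\cB^1)$ one pulls back the portion of the local unstable fiber of $f^{n_i}(y)$ that crosses $\cB^2$, obtaining sets $D^2_i(y)\subset D_r$ whose union $D^2_i$ satisfies $\Leb_D(D^2_i)\ge b_2\,\Leb_D(D_r)$ with $b_2>0$ independent of $i$ and of the scale $r$ (bounded distortion compares $D^2_i(y)$ with $D^1_i(y)$, and Remark~\ref{r.lowerbound} bounds the total mass of the $D^1_i(y)$ from below). Choosing the density scale $r$ so that $\Leb_D(D_r^*)>(1-\vep)\Leb_D(D_r)$ with $\vep<b_2$ then forces $D^*$ to meet $D^2_i$, which is exactly what places a piece of $f^{n_i}(D^*)$ inside $\cB^2$ on the same fiber along which it already meets $\cB^1$. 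Some comparison of a uniform constant like $b_2$ with the density scale is indispensable, and your proposal contains no such ingredient; a purely qualitative density point at a fixed $k$ cannot supply it. (A minor further point: the paper's $\delta_0$ is defined by an either/or alternative over all $\zeta\in\cW^s_{\rho}(\ell')$; for base points of $\cB$, which avoid both gaps, a two-sided lower bound does hold by the crossing property of $cs$-blocks, so that part of your argument is an imprecision rather than the gap.)
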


\begin{proof}
It is no restriction to suppose every point of $D^*$ is a Lebesgue density point.
Fix $\vep>0$ small (the precise choice will be given later).
Take any point $x\in D^*$ and let $r>0$ small enough so that $\Leb_{D}(D_r^*) > (1-\vep) \Leb_{D}(D_r)$,
where $D_r$ is the disk of radius $r$ around $x$ and $D_r^*=D_r \cap D^*$.
By Proposition~\ref{p.csboxsection} and Remark~\ref{r.uniqueustate} there exists $b_1>0$,
independent of $x$ and $r$, such that
$$
\Leb_{D}(D_r^* \cap f^{-n_i}(\cB^1))\ge b_1 \Leb_D(D_r^*) \ge b_1 (1-\vep) \Leb_D(D_r)
$$
for a sequence $n_i\to\infty$. Let $\rho>0$ be slightly smaller than $r$,
so that
$$
\Leb_D(D_\rho) > (1-\vep) \Leb_D(D_r).
$$
Then, for any $n_i$ sufficiently large and any $y\in D_\rho$, we have
$f^{-n_i}(W^u_{loc}(f^{n_i}(y))) \subset D_r$.
Since the local unstable manifold of $f^{n_i}(y)$ cuts across both $\cB^1$ and $\cB^2$,
this means that we can associate to $y \in D^*_\rho \cap f^{-n_i}(\cB_1)$ the following
subsets of $D_r$:
$$
D^1_i(y)=f^{-n_i}(W^u_{loc}(f^{n_i}(y))\cap\cB^1) \quand
D^2_i(y)=f^{-n_i}(W^u_{loc}(f^{n_i}(y))\cap\cB^2).
$$
By bounded distortion,
there exists $\kappa=\kappa(f)>0$ such that
$$
\Leb_D(D^2_i(y)) \ge \kappa \Leb_D(D^1_i(y))
\quad\text{for every $y$ and every $i$.}
$$
We also denote by $D^1_i$ and $D^2_i$ the (disjoint) unions of $D^1_i(y)$ and $D^2_i(y)$,
respectively, over all $y \in D^*_\rho \cap f^{-n_i}(\cB_1)$.
Then, the previous inequality gives
$$
\Leb_D(D^2_i) \ge \kappa \Leb_D(D^1_i)
\quad\text{for every $i$.}
$$
By Proposition~\ref{p.csboxsection} and Remark~\ref{r.lowerbound}, there exists a sequence
$(n_i)_i$ of positive integers and there exists $b_1>0$ such that
$$
\Leb_{D}(D_\rho^* \cap f^{-n_i}(\cB^1))
\ge b_1 \Leb_{D_\rho}(D_\rho^*)
\ge b_1 (1-\vep)^2\Leb_{D}(D_\rho).
$$
Consequently,
$$
\Leb_{D}(D^1_i)
\ge b_1 (1-\vep)^2\Leb_{D}(D_\rho)
\ge b_1 (1-\vep)^3\Leb_{D}(D_r).
$$
This implies that $\Leb_D(D^2_i) \ge b_2 \Leb_{D}(D_r)$, where the constant $b_2>0$ is
independent of $i$ and the choice of $r$. Now, suppose the lemma is false.
Then $D^2_i(y) \cap D^*$ is empty, for every $y\in D_\rho^* \cap f^{-n_i}(\cB^1)$,
that is, $D^2_i\cap D^*=\emptyset$. It follows that $\Leb_D(D_r^*)\le (1-b_2) \Leb_D(D_r)$.
This contradicts the choice of $D_r^*$ at the beginning of the proof, as long as we fix
$\vep < b_2$. The proof of the lemma is complete.
\end{proof}

\begin{proof}[Proof of Proposition~\ref{p.generalposition}]
Let $h^{cs}:D^1\to D^2$ be a $cs$-holonomy between $u$-disks $D_1$ and $D_2$.
Let $D_1^*\subset D_1$ be a positive $\Leb_{D_1}$-measure subset and
$D_2^*=h^{cs}(D_1^*)$. We want to prove that $\Leb_{D_2}(D_2^*)$ is also positive.
By Lemma~\ref{l.lem2}, there exists $\zeta\in D^*_1$ and $k\ge 1$ such that
\begin{equation}\label{eq.exampleIII}
\Leb_{f^{k}(D)}\big(\cW^u_{2\tau}(f^{k}(\zeta))\cap f^{k}(D^*_1)\cap \cB^i\big) > 0
\quad\text{for both } i=1, 2.
\end{equation}
Notice that for $k$ big enough, $\cW^u_{2\tau}(f^{k}(\zeta))$ and
$\cW^u_{2\tau}(f^{k}(h^{cs}(\zeta)))$ are contained in nearby $cu$-disks.
That is because the stable foliation is uniformly contracting.
Then $\cW^u_{2\tau}(h^{cs}(\zeta))\cap \cW^s_{\rho}(\ell')\neq \emptyset$.
This implies $\cW^u_{2\tau}(h^{cs}(\zeta))\cap \tilde{\cB}_1\neq \emptyset$
or $\cW^u_{2\tau}(h^{cs}(\zeta))\cap \cB_2\neq \emptyset$.
Since $\tilde{\cB}_1, {\cB}_2$ are $cs$-blocks, whose $cs$-foliations are
uniformly absolutely continuous, from \eqref{eq.exampleIII} one gets that
$$
\Leb_{f^{k}(D_2)}\big(\cW^u_{2\tau}(f^{k}(h^{cs}(\zeta)))\cap f^{k}(D_2^*)\cap {\cB^i}\big)>0
$$
for either $i=1$ or $i=2$. This implies that $\Leb_{D_2}(D_2^*)>0$.
Thus, the center stable foliation is absolutely continuous, as claimed.
\end{proof}

\section{Robust absolute continuity}\label{s.robustabscont}

Here we use the results in the previous section to give examples of open sets
of diffeomorphisms with absolutely continuous center stable/unstable foliations.

\subsection{Ma\~n\'e's example}\label{ss.Mane}

Ma\~n\'e~\cite{Man78} constructed a $C^1$ open set of diffeomorphisms $\cU$ such
that every $f\in \cU$ is partially hyperbolic (but not hyperbolic), dynamically coherent,
and transitive. From Proposition~\ref{p.nonexpansion} one gets that every $C^k$, $k>1$
diffeomorphism $f$ in some non-empty $C^1$ open subset $\cU'$ has absolutely continuous
center stable foliation. To explain this, let us recall some main features in Ma\~n\'e's
construction.

One starts from a convenient linear Anosov map $A:\TT^3\to \TT^3$ with eigenvalues
$0<\lambda_1<\lambda_2<1<\lambda_3$. Let $p$ be a fixed point of $A$ and $\rho>0$ be small.
One deforms $A$ inside the $\rho$-neighborhood of $p$, so as to create some fixed point with
stable index $1$, while keeping the diffeomorphism unchanged outside $B_\rho(p)$.
Ma\~n\'e~\cite{Man78} shows that this can be done in such a way that the diffeomorphism
$f_0:\TT^3\to\TT^3$ thus obtained is partially hyperbolic, with splitting $E^s\oplus E^c \oplus E^s$
where all factors have dimension $1$, and every diffeomorphism in some $C^1$ neighborhood $\cU$
is dynamically coherent and transitive. The presence of periodic points with both stable indices
$1$ and $2$ ensures that $f_0$ is not Anosov. Bonatti, Viana~\cite{BoV00} observed that every $C^k$,
$k>1$ diffeomorphism $f\in \cU$ has mostly contracting center direction.
Here, as well as in the steps that follow, one may have to reduce the neighborhood $\cU$.
Then Bonatti, D\'\i az, Ures \cite{BDU02} showed that the unstable foliation of every $f\in \cU$
is minimal. According to~\cite{BoV00}, this implies that every $C^k$, $k >1$ diffeomorphism
$f\in\cU$ admits a unique physical measure, whose basin contains Lebesgue almost every point.
The non-expansion condition in Proposition~\ref{p.nonexpansion} can be checked as follows.

A crucial observation is that the center stable bundle $E^c\oplus E^s$ is uniformly contracting
outside $B_\rho(p)$, for all diffeomorphisms in a neighborhood, because $f_0=A$ outside $B_\rho(p)$.
Let $q$ be another fixed or periodic point of $A$ and assume $\rho$ was chosen much smaller
than the distance from $p$ to the orbit of $q$. Then $q$ remains a periodic point for $f_0$,
with stable index $2$ and stable manifold of size $\ge 5 \rho$. Let $q_f$ denote the hyperbolic
continuation of $q$ for every $f$ in a neighborhood of $f_0$: $q_f$ is a periodic point
with stable index $2$ and stable manifold of size $\ge 4 \rho$. The fact that $E^c\oplus E^s$
is uniformly contracting outside $B_{\rho}(p)$ also implies that
$f^n(\cW^{cs}_{\rho}(x))\subset \cW^{cs}_{2\rho}(f^n(x))$ for all $x\in \TT^3$ and $n\ge 0$.
This proves that $f$ is non-expanding along the center direction, and so we may apply
Proposition~\ref{p.nonexpansion} to conclude that the center stable foliation of every
$f$ near $f_0$ is absolutely continuous.

We ignore whether the center unstable foliation and the center foliation are absolutely
continuous or not in this case. However, in the next section, a different construction
allows us to give examples where all three invariant foliations are robustly absolutely
continuous.

\subsection{Robust absolute continuity for all invariant foliations}\label{ss.proofofmainG}

Here we prove Theorem~\ref{t.mainG} and use it to deduce Theorem~\ref{t.main2}. We begin
with an intermediate result:

\begin{proposition}\label{p.intermediate}
Let $f_0:N\to N$ be a $C^k$, $k>1$ skew-product $f_0(x,\theta)=(g_0(x),h_0(x,\theta))$,
where $g_0$ is a transitive Anosov diffeomorphism. Assume that $f_0$ is accessible and
has some periodic center leaf in general position. Then there exists a $C^k$ neighborhood
$\cV$ of $f_0$ such that for every $f\in\cV$, the center stable, center unstable, and
center foliation are absolutely continuous.
\end{proposition}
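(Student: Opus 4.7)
My plan is to derive each of the three absolute continuity assertions of Proposition~\ref{p.intermediate}, the first two essentially from the robust criterion of Proposition~\ref{p.generalposition} and the third by a Fubini-type argument.

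First, I would use Hirsch--Pugh--Shub to place a $C^k$-neighborhood $\cV$ of $f_0$ inside $\cP_1^k(N)$, with the circle center fiber bundle structure preserved. The periodic center leaf $\ell_0$ of $f_0$ admits a unique continuation $\ell_f$ for every $f\in\cV$, and the two clauses defining general position---Morse-Smale dynamics on $\ell_f$ with unique attractor $a_f$ and unique repeller $r_f$, and the transversality $h^s(a_f\cup r_f)\cap h^u(a_f\cup r_f)=\emptyset$ on a homoclinic leaf---are both open conditions and so persist on $\cV$ after possibly shrinking. Since $g_0$ is transitive Anosov, $\fc_0=g_0$ is transitive on the whole leaf space $\Nc$, and transitivity of the nearby hyperbolic homeomorphism $\fc$ is preserved by the topological conjugacy between $\fc$ and $\fc_0$ coming from the HPS stability. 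The local product structure on $\Nc$ (Section~\ref{ss.leafspace}) then implies every unstable set of $\fc$ meets the stable set of $\pi_c(\ell_f)$, and pulling back through $\pi_c$ gives that every strong unstable leaf of $f$ meets $\cW^s(\ell_f)$. Proposition~\ref{p.generalposition} now applies and yields absolute continuity of $\cW^{cs}$ on $\cV$. The analogous statement for $\cW^{cu}$ follows by inversion: general position is symmetric under $f\leftrightarrow f^{-1}$ (attractor and repeller swap, as do $h^s$ and $h^u$, but conditions (a)--(b) remain invariant), and $f_0^{-1}$ is again a skew-product over the transitive Anosov diffeomorphism $g_0^{-1}$ with the same circle fiber bundle, so the same argument applied to $f^{-1}$ yields $\cW^{cs,f^{-1}}=\cW^{cu,f}$ absolutely continuous.

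With both $\cW^{cs}$ and $\cW^{cu}$ absolutely continuous, I would deduce absolute continuity of $\cW^c=\cW^{cs}\cap\cW^{cu}$ by a two-level Fubini disintegration of local Lebesgue measure: first along $\cW^{cs}$-leaves (from the previous step), and then inside each cs-leaf along the one-dimensional $\cW^c$-subfoliation, which coincides with the trace of $\cW^{cu}$. The classical absolute continuity of the sub-foliations $\cW^s\subset\cW^{cs}$ and $\cW^u\subset\cW^{cu}$ identifies the transverse measures appearing in both disintegrations with Lebesgue measure on natural $\cW^u$ and $\cW^s$ transversals, so that the composed disintegration controls the $\cW^c$-holonomy between transversals built locally as products of $\cW^u$-disks and $\cW^s$-disks.

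The main obstacle is this last step: absolute continuity of an intersection of two absolutely continuous foliations is not automatic, and the Fubini argument must genuinely exploit the presence of the absolutely continuous strong stable and unstable sub-foliations inside $\cW^{cs}$ and $\cW^{cu}$, together with the $1$-dimensionality of the center, in order to identify the composed transverse measure with Lebesgue on a natural $\cW^u\times\cW^s$ transversal. The first two steps, by contrast, are essentially packaging of Proposition~\ref{p.generalposition} together with standard persistence properties of partially hyperbolic dynamics.
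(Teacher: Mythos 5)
There is a genuine gap in your first step: Proposition~\ref{p.generalposition} is not a free-standing criterion, it lives under the standing hypothesis of Section~\ref{s.criteria} that $f$ is partially hyperbolic, dynamically coherent \emph{and has mostly contracting center direction}. Its proof uses this throughout (pairwise disjointness of supports of Gibbs $u$-states from Lemma~\ref{l.disjoint}, uniqueness of the ergodic Gibbs $u$-state, existence of $cs$-blocks of uniform size coming from Pesin stable manifolds of full dimension $d_{cs}$, and the recurrence estimate of Proposition~\ref{p.csboxsection} with Remark~\ref{r.lowerbound}). Your proposal never verifies that $f_0$ and its $C^k$-neighbors have mostly contracting center; tellingly, you never use the accessibility hypothesis, which is exactly what makes this verification possible. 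In the paper's argument one first notes that $f_0$, being a skew-product, has absolutely continuous center stable foliation and is accessible, so Theorem~\ref{t.mainB} applies; the Morse--Smale periodic center leaf (which carries hyperbolic periodic points) rules out alternative (a) (the invariant Lipschitz metric / rotation-extension case), hence alternative (b) holds, i.e.\ the center is mostly contracting, and by Andersson~\cite{An10} this persists on a whole $C^k$ neighborhood. Without this step (or some substitute ruling out Gibbs $u$-states with non-negative center exponent, which is where Proposition~\ref{p.somenegative} and accessibility enter), invoking Proposition~\ref{p.generalposition} for $f$ and for $f^{-1}$ is unjustified. Once this is inserted, your treatment of $\cW^{cs}$ and $\cW^{cu}$ (persistence of general position, transitivity of $\fc$ giving that every strong unstable leaf meets $\cW^s(\ell_f)$, and the symmetric argument for $f^{-1}$) matches the paper's use of Corollary~\ref{c.generalposition}.

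The third step is also left open in your proposal: you correctly observe that absolute continuity of $\cW^c=\cW^{cs}\cap\cW^{cu}$ does not follow formally and sketch a Fubini scheme exploiting the strong subfoliations and $\dim E^c=1$, but you do not carry it out, and you flag it yourself as the main obstacle. The paper closes this step with Lemma~\ref{l.csufoliations} (Pugh, Viana, Wilkinson): if two transverse foliations are absolutely continuous then so is the foliation by their intersections; the proof is a direct disintegration argument on a pair of transversals to $\cW^c$, using only the absolute continuity of the two induced foliations on those transversals, and needs neither the strong subfoliations nor one-dimensionality of the center. So your plan points in the right direction but would need to be completed, and the paper's lemma shows a cleaner and more general way to do it.
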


\begin{proof}
Every skew-product has absolutely continuous center stable and center unstable foliation
and is robustly dynamically coherent (by \cite{HPS77}; the center foliation of a partially
hyperbolic skew-product is always plaque expansive). In particular,
$f_0$ satisfies all the hypotheses of Theorem~\ref{t.mainB}. The presence of a
Morse-Smale center leaf prevents $f_0$ from being conjugate to a  rotation extension.
Thus, the center direction is mostly contracting in a whole neighborhood of $f_0$.
The assumption that $g_0$ is transitive also ensures that every strong unstable leaf
intersects $\cW^s(\ell)$. So, we are in a position to apply Corollary~\ref{c.generalposition}
to conclude that the center stable foliation is robustly absolutely continuous.
The same reasoning applied to the inverse of $f_0$ gives that the center unstable
foliation is also robustly absolutely continuous. From the following general fact
we get that the center foliation is also robustly absolutely continuous:

\begin{lemma}[Pugh, Viana, Wilkinson~\cite{PVW}]\label{l.csufoliations}
Let $\cF^1$, $\cF^2$, $\cF^3$ be foliation in some smooth manifold $N$ such that
$\cF^1$ and $\cF^2$ are transverse at every point and the leaves of $\cF^3$ are
coincide with the intersections of leaves of $\cF^1$ and $\cF^2$:
for every point $x\in N$, $\cF^3(x)=\cF^1(x)\cap \cF^2(x)$. If $\cF_1$ and $\cF_2$
are absolutely continuous then so is $\cF_3$.
\end{lemma}

\begin{proof}
Suppose $D_1,D_2$ are two disks transverse with $\cF^3$, and $h^3:
D_1\rightarrow D_2$ is the holonomy map induced by $\cF^3$. Then
$\cF^1$ and $\cF^2$ induce two foliations $\hat{\cF}^1_i$ and
$\hat{\cF}^2_i$ on $D_i$, $i=1,2$, and these two foliations
absolutely continuous in $D_i$. Fix $l_1\subset D_1$ a leaf of
$\hat{\cF}^2_1$, and denote by $l_2=h^3(l_1)$, then $l_2$ is a
leaf of $\hat{\cF}^2_2$.
Since the foliations $\hat{\cF}^1_{i}$, $i=1,2$ are absolutely
continuous, one has that the disintegration of the Lebesgue
measure $\Leb_{D_1}$ along the foliation $\hat{\cF}^1_1$ is
$$
\Leb_{D_1}=\varphi_x(y)d\Leb_{\hat{\cF}^1_1(x)}(y)d\Leb_{l_1}(x),
\quad\text{where } \varphi_x(y)>0
$$
and the disintegration of the Lebesgue measure of $D_2$ along the
foliation $\hat{\cF}^1_2$ is
$$
\Leb_{D_2}=\phi_x(y)d\Leb_{\hat{\cF}^1_2(x)}(y)d\Leb_{l_2}(x),
\quad\text{where } \phi_x(y)>0.
$$
Now for any set $\Delta_1\subset D_1$ with $\Leb_{D_1}(\Delta_1)>0$,
denote its image for $h^3$ by $\Delta_2$. By the above formulas
for the disintegration, there is a positive $\Leb_{l_1}$ measure subset
$\Gamma_1\subset l_1$ such that for any $x\in \Gamma_1$, one has
$$
\Leb_{\hat{F}^1_1(x)}(\Delta_1\cap \hat{F}^1_1(x))>0.
$$
Denote $\Gamma_2=h^3(\Gamma_1)\subset l_2$.
By the absolute continuity of $\cF^1$ and $\cF^2$, $\Leb_{l_2}(\Delta_2)>0$
and $\Leb_{\hat{F}^1_2(x)}(\Delta_2\cap \hat{F}^1_2(x))>0$ 
for any $x\in \Gamma_2$. This implies
$\Leb_{D_2}(\Delta_2)>0$, and so the proof is complete.
\end{proof}
This completes the proof of Proposition~\ref{p.intermediate}.
\end{proof}


To complete the proof of Theorem~\ref{t.mainG} it suffices to note that
any skew-product $f_0$ with a Morse-Smale center leaf, as in the
statement of the theorem, is approximated by skew-products with center
leaves in general position: all that is missing is property (b) in the
definition of general position, and this can be achieved by a $C^k$
small perturbation inside the space of skew-products.
Then Theorem~\ref{t.mainG} follows from Proposition~\ref{p.intermediate}.

Now Theorem~\ref{t.main2} is deduced as follows. For any skew-product $f_0$
as in the statement, Let $\cU$ be an open set that accumulates on $f_0$ as
given by Theorem~\ref{t.mainG}: for any $f\in\cU$ the center stable, 
center unstable, and center foliations are absolutely continuous.
Then let $\cV\subset\cU$ be an open subset such that every $f\in\cV$ is
accessible (\cite{NT01}). Then every $f\in\cV$ has finitely many physical 
measures, with basins containing almost every point. The Morse-Smale behavior
on the center leaf $\ell$ prevents $f$ from being conjugate to a rotation extension.
Thus, we are in case (b) of Theorem~\ref{t.mainB}. From the fact that $\ell$
contains a unique periodic attractor we also get that the physical measure is
unique (see Theorem~\ref{t.mainD}). The same argument applies for $f^{-1}$.
This finishes the proof of Theorem~\ref{t.main2}.

\subsection{Volume preserving systems}\label{ss.conservative}

Here we prove Theorem~\ref{t.main3} and a pair of related results.
Based on these, we also describe a, partially conjectural, scenario for absolute 
continuity of foliations of conservative and dissipative systems.

Part (1) of Theorem~\ref{t.main3} is a direct consequence of the main
result of Baraviera, Bonatti~\cite{BB03}. Part (2) is given by the following result:

\begin{lemma}\label{l.ex4}
For any $f\in\cW_0$ with $\lambda^c(f)>0$, the center foliation and the 
center stable foliation are not upper leafwise absolutely continuous.
\end{lemma}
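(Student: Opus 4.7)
The plan is to exhibit a single measurable set $Y\subset N$ of full Lebesgue measure whose intersection with every center leaf and every center stable leaf has zero leaf volume; such a $Y$ simultaneously witnesses the failure of upper leafwise absolute continuity for both $\cW^c$ and $\cW^{cs}$.

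Since $f$ is volume preserving, partially hyperbolic with $\dim E^c=1$, and $\lambda^c(f)\neq 0$, the theorem of Ruelle and Wilkinson~\cite{RW01} applies: writing $\Leb=\int \nu_\xi\,d\hat\Leb(\xi)$ for the disintegration of $\Leb$ along $\cW^c$, each conditional probability $\nu_\xi$ is purely atomic. Enumerating the atoms of $\nu_\xi$ in order of decreasing mass produces countably many measurable sections $\sigma_n:\Nc\to N$ of the center foliation, with $\sigma_n(\xi)$ an atom of $\nu_\xi$ (or undefined if $\nu_\xi$ has fewer than $n$ atoms). Set $Y=\bigcup_{n\ge 1}\sigma_n(\Nc)$. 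Then $\Leb(Y)=1$ while $Y\cap\cW^c(\xi)$ is at most countable for every $\xi$, so $\Leb_{\cW^c(\xi)}(Y\cap\cW^c(\xi))=0$. This already rules out upper leafwise absolute continuity of $\cW^c$.

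For the center stable foliation, fix any cs-leaf $L=\cW^{cs}(z)$. Inside $L$ the strong stable foliation is transverse to the center foliation and together they span $TL$, so, after identifying nearby center leaves with $\cW^c_{\loc}(z)$ via arclength, the map
$$
\Phi:\cW^s_{\loc}(z)\times\cW^c_{\loc}(z)\to L,\qquad \Phi(y,t)=\text{the point at signed arclength } t \text{ along } \cW^c(y),
$$
is a homeomorphism onto a neighborhood in $L$. The restriction of $\cW^s$ to $L$ inherits the classical absolute continuity of the strong stable foliation~\cite{BP74}, so $\Phi^*\Leb_L$ is absolutely continuous with respect to the product of the leaf volumes on the domain. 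In the $(y,t)$ coordinates each $\sigma_n$ traces a measurable graph $\{(y,t_n(y)):y\in\cW^s_{\loc}(z)\}$, which by Fubini has zero product measure; hence $\Phi^{-1}(Y\cap L)$, being a countable union of such graphs, has zero product measure, and therefore $\Leb_L(Y\cap L)=0$. Covering a full volume subset of $L$ by such product charts gives the conclusion for the whole leaf.

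The main technical point is the Fubini argument on cs-leaves, which relies on the absolute continuity of strong stable holonomy inside $L$ (so that $\Phi^*\Leb_L$ is equivalent to the product measure) and on the measurable enumeration of atoms; both are standard consequences of the classical theory and of the measurable disintegration provided by Ruelle--Wilkinson.
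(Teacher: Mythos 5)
Your construction has two genuine gaps. First, the invocation of Ruelle--Wilkinson~\cite{RW01} is not available under the hypotheses of the lemma: that theorem requires an \emph{ergodic} invariant measure whose exponents along the (compact) foliation are nonzero almost everywhere, whereas for $f\in\cW_0$ the only assumption is that the \emph{integrated} exponent $\lambda^c(f)=\int\lambda^c(x)\,d\Leb$ is positive. No ergodicity of $\Leb$ is assumed in $\cW_0$ (part (1) of Theorem~\ref{t.main3} comes from Baraviera--Bonatti, not from accessibility), so the pointwise exponent may vanish or change sign on an invariant set of positive volume, and on that part the conditionals of $\Leb$ along center leaves need not be atomic; even decomposing into ergodic components does not help directly, since a mixture of atomic conditionals over uncountably many components can be non-atomic. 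Thus your set $Y$ need not have full measure, and it is not clear it has positive measure. Note also that disproving upper leafwise absolute continuity only requires a \emph{positive}-volume witness; this is exactly what the paper produces: by Birkhoff, the set $\Gamma_{c,1}$ of points with pointwise forward center expansion at rate $\ge c\in(0,\lambda^c(f))$ has positive volume, and Proposition~\ref{p.finiteGamma} (a volume-growth argument on compact center leaves, the same mechanism underlying \cite{RW01}) shows it meets every center leaf in at most $n_0$ points.

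Second, and more seriously, the Fubini step on a center stable leaf $L$ does not follow from what you invoke. Your chart $\Phi$ has the \emph{center} plaques as verticals, so to conclude $\Leb_L(Y\cap L)=0$ from ``each center plaque meets $Y$ in countably many points'' you need the disintegration of $\Leb_L$ along center plaques to have conditionals absolutely continuous with respect to arclength (and an absolutely continuous quotient on the stable transversal), i.e.\ $\Leb_L\ll\Phi_*(\Leb^s\times\Leb^c)$. Absolute continuity of $\cW^s$ restricted to $L$ gives a product structure adapted to the \emph{stable} plaques, not to the center plaques, so it does not yield this; and the missing statement is itself a leafwise absolute continuity property of the center foliation inside $cs$-leaves, which is precisely the kind of property that is pathological in this regime and cannot be taken for granted. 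The paper sidesteps this entirely by using that $\Gamma_{c,1}$ is saturated by strong stable leaves: its trace on any center stable leaf is a union of at most $n_0$ strong stable leaves, hence of codimension one inside $L$ and manifestly $\Leb_L$-null. Your set of atoms $Y$ enjoys no such saturation, so the $\cW^{cs}$ half of your argument does not go through as written.
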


\begin{proof}
Fix $c\in (0,\lambda^c(f))$. Then, by the Birkhoff ergodic theorem, the set
$$
\Gamma_{c,1}
 =\{x\in N:\lim \frac{1}{n}\sum_{i=1}^n \log\|Df^{-1}\mid {E^c(f^{i}(x))}\|^{-1}\ge c\}
$$
has positive volume. Then, by Proposition~\ref{p.finiteGamma}, there is $n_0\ge 1$ such
that the intersection of any center leaf with $\Gamma_{c,1}$ has at most $n_0$ points.
In particular, the intersection has zero volume inside the center leaf. 
So, the center foliation of $f$ is not upper leafwise absolutely continuous.
Next, observe that the set $\Gamma_{c,1}$ consists of 
entire strong stable leaves. So, the intersection of $\Gamma_{c,1}$ with any center
stable leaf consists of no more than $n_0$ strong stable leaves. This implies
that the intersection has zero volume inside the center stable leaf. Consequently,
the center stable foliation is not upper leafwise absolutely continuous.
In particular, we get that the center foliation and the center stable foliation are
not absolutely continuous, as claimed.
\end{proof}

Now we prove part (3) of the theorem. Let $p\in M$ be a periodic point of $g_0$ and
$a\in M$ be a homoclinic point associated to $p$. For simplicity, we take the periodic
point to be fixed. Let us begin by constructing $\cW_1$. The first step is to
approximate $f_0$ by some diffeomorphism $f_1$ such that $\lambda^c(g)>0$ for any 
$g$ in a $C^1$ neighborhood. This can be done by the perturbation method in \cite{BB03};
the perturbation may be chosen such that $f_1=f_0$ on a neighborhood of $\{p\}\times S^1$,
and we assume that this is the case in what follows. The second step is to find $f_2$
arbitrarily close to $f_1$ such that, denoting by $\ell_p$ and $\ell_a$ the center
leaves associated to the continuation of $p$ and $a$,
\begin{itemize}
\item every strong unstable leaf of $f_2$ intersects $W^s(\ell_p)$;
\item the restriction of $f_2$ to $\ell_p$ is a Morse-Smale diffeomorphism,
with a single attractor $\xi$ and a single repeller $\eta$
\item and $\cW^u(\eta)$ and $W^s(\xi)$ are in general position (we call this non-strong connection).
\end{itemize}
These properties remain valid in a small neighborhood of $f_2$. As a final step, 
we use \cite{BDU02,HHU07p,HHTU10} to find a diffeomorphism $f_3$ arbitrarily close
to $f_2$ and such that the strong stable and the strong unstable foliations are 
minimal in a whole $C^1$ neighborhood of $f_3$. We take $\cW_1$ to be such a 
neighborhood. By \cite{BoV00}, for every diffeomorphism $f\in\cW$ the inverse
$f^{-1}$ has mostly center direction. Then, by \cite{An10}, the same is true
in a whole $C^k$ neighborhood $\cW_f$ in the space of all (possibly dissipative).
diffeomorphisms. Hence, we are in a position to apply Corollary~\ref{c.generalposition}
to conclude that the center unstable foliation is absolutely continuous for 
every diffeomorphism in $\cW_f$.

This finishes the proof of Theorem~\ref{t.main3}. The next proposition is a variation of 
results in \cite{AVW1} where center foliations are replaced by center stable or center 
unstable foliations.

\begin{proposition}\label{p.variation1}
Let $f_0$ be as in Theorem~\ref{t.main1}, where $M$ is a surface, and let $f$ be any 
$C^1$ nearby accessible, volume preserving diffeomorphism with $\lambda^c(f)=0$.
If either the center stable foliation or the center unstable foliation is absolutely
then $f$ is smoothly conjugate to a rotation extension and the center foliation is a 
smooth foliation.
\end{proposition}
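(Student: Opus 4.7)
The plan is to reduce to the Avila--Viana--Wilkinson rigidity theorem \cite{AVW1}, which under the hypotheses of Theorem~\ref{t.main1} asserts that any volume preserving, accessible $C^k$ diffeomorphism $f$ near $f_0$ with $\lambda^c(f)=0$ and absolutely continuous \emph{center} foliation is smoothly conjugate to a rotation extension and has smooth $\cW^c$. Thus it is enough to prove that, in our setting, absolute continuity of $\cW^{cs}$ implies absolute continuity of $\cW^c$; the case where $\cW^{cu}$ is absolutely continuous is handled symmetrically by passing to $f^{-1}$, since $\lambda^c(f^{-1})=-\lambda^c(f)=0$ and $\cW^{cs}$ of $f^{-1}$ equals $\cW^{cu}$ of $f$.

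First, volume preservation and accessibility give ergodicity of $\Leb$, and $\Leb$ is automatically a Gibbs $u$-state since $\cW^u$ is always absolutely continuous. Because $\cW^{cs}$ is absolutely continuous and $\lambda^c(\Leb)=0$, Proposition~\ref{p.elliptic} applies with $\Lambda$ the unique attractor of $\fc$ (the whole leaf space, since $g_0$ is transitive): $\Leb$ is the only Gibbs $u$-state of $f$, and its conditional probabilities $\{m_\xi\}$ along center leaves are equivalent to arc length with densities uniformly bounded away from zero and infinity. By Proposition~\ref{p.continuousdisintegration} this disintegration is continuous and bi-invariant, so that Theorem~\ref{t.mainB}(a) and Remark~\ref{r.continuoussection} furnish a continuous conjugacy $h\colon N\to M\times\RR/\ZZ$ with the rotation extension $(x,\theta)\mapsto(g_0(x),\theta+\omega(x))$, bi-Lipschitz along every center leaf.

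Second, by the remark following Proposition~\ref{p.ustates_product}, $\mu:=(\pi_c)_*\Leb$ has local product structure on the leaf space, which in the present setting may be identified with $M$. Combining this with the continuous bi-invariant disintegration of $\Leb$ into measures equivalent to arc length on the one-dimensional center leaves, one obtains in any $\cW^c$-foliation chart $W^u_\vep(\xi_0)\times W^s_\vep(\xi_0)\times\ell_0$ a local representation
\[
d\Leb \;\sim\; \rho(\xi^u,\xi^s,\theta)\,d\nu^u(\xi^u)\,d\nu^s(\xi^s)\,d\theta,
\]
with $\nu^u,\nu^s$ the local unstable and stable factors of $\mu$ and $\rho$ a continuous positive density uniformly bounded away from $0$ and $\infty$. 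This Fubini decomposition in $\cW^c$-adapted coordinates gives local absolute continuity of $\cW^c$: center holonomy between nearby transverse disks becomes, in these coordinates, a measurable bijection that fixes the $(\xi^u,\xi^s)$-factor and acts only on $\theta$, hence preserves Lebesgue null sets. Global absolute continuity follows by chaining charts, and \cite{AVW1} then completes the proof.

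The main obstacle is the rigorous passage to the displayed Fubini representation of $\Leb$: reconciling the local product structure of the quotient measure $\mu$, which is defined only up to equivalence, with the bi-invariant continuous density $\rho$, so that no measurable pathology develops along $\theta$. The one-dimensionality of $E^s$ and $E^u$ forced by $\dim M=2$ is what makes this viable, since it is exactly the setting in which the Avila--Viana reducibility behind Proposition~\ref{p.continuousdisintegration} produces holonomy-invariant continuous densities without cocycle obstructions.
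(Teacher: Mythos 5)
Your overall strategy---reduce to the rigidity theorem of \cite{AVW1} through the elliptic alternative---matches the paper's, and your first paragraph is sound: volume is an ergodic Gibbs $u$-state with vanishing center exponent, so Proposition~\ref{p.elliptic} applies (and the $\cW^{cu}$ case follows by passing to $f^{-1}$). The gap is in the step where you try to upgrade absolute continuity of $\cW^{cs}$ to \emph{full} absolute continuity of $\cW^c$. The representation $d\Leb\sim\rho\,d\nu^u\,d\nu^s\,d\theta$ lives in the fiber-bundle chart, which is only a homeomorphism in the transverse directions, and $\nu^u,\nu^s$ are abstract factors of the quotient measure $(\pi_c)_*\Leb$ with no a priori relation to the Riemannian area induced on smooth transversals to $\cW^c$. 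Absolute continuity of the center foliation is a statement about transversal area: the holonomy $h\colon\Sigma_1\to\Sigma_2$ must carry $\Leb_{\Sigma_1}$-null sets to $\Leb_{\Sigma_2}$-null sets, which amounts to comparing the projected measures $(\pi_c\mid\Sigma_1)_*\Leb_{\Sigma_1}$ and $(\pi_c\mid\Sigma_2)_*\Leb_{\Sigma_2}$ on the leaf space. Your argument only shows that $h$ preserves the class of $\nu^u\times\nu^s$-null sets (because it fixes the $(\xi^u,\xi^s)$-coordinates); identifying that class with the class of transversal-area-null sets is precisely the absolute continuity you are trying to prove, so the step is circular. You yourself flag this as ``the main obstacle'', but it is not resolved, and the one-dimensionality of $E^u$ and $E^s$ does not supply the missing link. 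Note also that the tool the paper does have for such upgrades, Lemma~\ref{l.csufoliations}, needs \emph{both} $\cW^{cs}$ and $\cW^{cu}$ absolutely continuous, which is not available under the hypothesis.

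The upgrade is in any case unnecessary: the form of the rigidity theorem of \cite{AVW1} invoked by the paper requires only \emph{leafwise} absolute continuity of the center foliation, that is, that the disintegration of volume along center leaves be equivalent to arclength --- and this is exactly what Proposition~\ref{p.elliptic}(1) provides in the elliptic case. The paper's proof is accordingly short: $\Leb$ is a Gibbs $u$-state with zero center exponent, so alternative (a) of Theorem~\ref{t.mainB} holds, hence the center foliation is leafwise absolutely continuous; then \cite{AVW1} gives that $\cW^c$ is smooth and $f$ is smoothly conjugate to a rigid model, which, the center bundle being trivial here, is a rotation extension (cf.\ Remark~\ref{r.continuoussection}). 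Replacing your second and third paragraphs by this observation closes the argument.
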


\begin{proof}
Suppose $\cW^{cs}$ is absolutely continuous. Then we may apply Theorem~\ref{t.mainB}.
In this case Lebesgue measure is a Gibbs $u$-state with zero center exponent,
and so we are in the elliptic case (a) of the theorem. In particular, the center
foliation is leafwise absolutely continuous. Then we can apply \cite{AVW1} to conclude
that the center foliation is smooth and $f$ is smoothly conjugate to a rigid model.
In present case, where the center fiber bundle is trivial, we get that $f$ is 
topologically conjugate to a rotation extension (cf. Remark~\ref{r.uniqueustate}).
\end{proof}

\begin{remark}\label{r.variation2}
Suppose $f$ is partially hyperbolic, dynamically coherent, volume preserving,
and all the center exponents are negative at almost every point. 
Then the center stable foliation of $f$ is upper leafwise absolutely continuous.
This is a fairly direct consequence of Pesin theory. Indeed, if all the 
Lyapunov exponents are negative then the Pesin local stable manifold of almost 
every point is a neighborhood of the point inside its center stable leaf.
Then the absolute continuity of Pesin laminations~\cite{Pe76} implies that 
the center stable foliation is upper leafwise absolutely continuous.
\end{remark}

We close with a couple of conjectures on the issue of absolute continuity.
The first one deals with dissipative systems.

\begin{conjecture}\label{cj.1}
Let $k>1$ and $\cC_k$ be the space of partially hyperbolic, dynamically coherent $C^k$
diffeomorphisms with mostly contracting center direction. Then, for an open and
dense subset,
\begin{itemize}
\item if there is a unique physical measure then
      the center stable foliation is absolutely continuous;
\item if there is more than one physical measure
      then the center stable foliation is not upper leafwise absolutely continuous.
\end{itemize}
\end{conjecture}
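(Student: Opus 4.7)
The plan is to prove both clauses on a common $C^k$-open and dense subset of $\cC_k$, combining the general-position criterion of Corollary~\ref{c.generalposition} with a transversality construction. For clause~(1), suppose $f\in\cC_k$ has a unique physical measure $m$. By Proposition~\ref{p.minimalufoliation}, $\supp m$ decomposes into finitely many $\cW^u$-minimal connected components cyclically permuted by $f$; passing to a power (which preserves the conclusion) we may assume each component is fixed. By Katok~\cite{Ka80} the set of hyperbolic periodic points of stable index $d_{cs}$ is dense in $\supp m$, and each such point lies on a periodic center leaf. My plan is to perform two successive arbitrarily small $C^k$ perturbations, both permitted by the $C^k$-robustness of the mostly contracting property (Andersson~\cite{An10}) and chosen to preserve uniqueness of the physical measure: the first arranges that some such periodic center leaf $\ell$ has $f^{\per(\ell)}\mid\ell$ Morse-Smale with a single attractor $a$ and single repeller $r$, and the second enforces the non-strong-connection condition $h^s(a\cup r)\cap h^u(a\cup r)=\emptyset$. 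The leaf $\ell$ is then in general position, and the $\cW^u$-minimality of its component forces every strong unstable leaf to intersect $\cW^s(\ell)$. Absolute continuity of the center stable foliation now follows from Corollary~\ref{c.generalposition} and is robust on the open subset so produced.

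For clause~(2), suppose $f$ has $s\ge 2$ physical measures $m_1,\dots,m_s$. The goal is to exhibit a measurable $Y\subset N$ with $\Leb(Y)>0$ and $\Leb^{cs}_L(Y)=0$ for $\Leb$-a.e. center stable leaf $L$. My strategy is to exploit the disjoint closed $\cW^u$-saturated supports $\supp m_i$ (Lemma~\ref{l.disjoint}, Proposition~\ref{p.Gibbsustates}) and to arrange, by a perturbation into an open dense subset of $\cC_k$, that no $\supp m_i$ contains an open piece of any center stable leaf; this should be a generic transversality-type condition once $s\ge 2$, since the supports are pairwise separated closed invariant sets whose cs-direction can be shaken. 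Starting from the $cs$-blocks $\cB_i\subset\supp m_i$ supplied by Lemma~\ref{l.uniform-csbox}, I would extract transversal skeletons $S_i\subset\cB_i$ formed by intersections with selected strong unstable disks, and define $Y$ as a saturated union of backward iterates of $\bigcup_i S_i$, calibrated so that Fubini along the (absolutely continuous) strong unstable foliation gives positive total Lebesgue measure while the $\Leb^{cs}$-measure of the intersection of $Y$ with each leaf remains zero. Proposition~\ref{p.csboxsection} would supply the recurrence ensuring that these iterates globally fill out a thick set.

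The main obstacle will be clause~(2). The passage from a set that is thin on each leaf to one of positive ambient volume requires a delicate quantitative balance: a uniform lower bound on the $\cW^u$-direction contribution together with a uniform upper bound on the $\Leb^{cs}$-measure of leaf slices, both surviving the iterative construction. The new ingredient that I expect will be needed, beyond the machinery of Section~\ref{s.criteria}, is a transverse refinement of the disintegration of Proposition~\ref{p.ustates_product}, controlling how a Gibbs $u$-state sits inside a center stable leaf once one passes to the open-dense subset where the supports are everywhere transverse to $\cW^{cs}$. Clause~(1) reduces in a relatively direct way to Corollary~\ref{c.generalposition} once the perturbative uniqueness control from~\cite{An10} is in place, so the bulk of the work for the full conjecture will concentrate on the construction and analysis of this transverse disintegration.
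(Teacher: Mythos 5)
You should first note what you are proving: this statement is labelled a \emph{Conjecture} in the paper. The authors give no proof of either clause, and they explicitly defer the relevant examples to the forthcoming paper \cite{VY2}, so there is no argument in the text to compare yours with; your proposal has to stand on its own, and as written it is a plan with an acknowledged missing ingredient rather than a proof.

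The decisive problem is clause (2). Every $f\in\cC_k$ satisfies exactly the standing hypotheses of Section~\ref{s.criteria} (partially hyperbolic, dynamically coherent, mostly contracting center), and Proposition~\ref{p.mainF} asserts that for such $f$ the center stable foliation is \emph{always} upper leafwise absolutely continuous, with no assumption on the number of physical measures. The set you set out to build --- a measurable $Y$ with $\Leb(Y)>0$ meeting $\Leb$-almost every center stable leaf in a set of zero $\Leb^{cs}$-measure --- is precisely the object whose existence the proof of Proposition~\ref{p.mainF} excludes, and it excludes it using the very tools you propose to use ($cs$-blocks and the recurrence Proposition~\ref{p.csboxsection}). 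So either the second clause must be read as failure of \emph{lower} leafwise absolute continuity (which is consistent with the discussion in Section~\ref{ss.absolutecontinuity}, where robust counterexamples to lower leafwise absolute continuity are announced for \cite{VY2}), or you are attacking a statement that contradicts a proposition of the same paper; in either case your construction of $Y$ cannot be completed, and no ``transverse disintegration'' refinement will rescue it, because the obstruction is Proposition~\ref{p.mainF} itself.

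Clause (1) also has genuine gaps. Corollary~\ref{c.generalposition} is a criterion for $f\in\cP_1^k(N)$: it needs compact one-dimensional center leaves forming a fiber bundle, since the notion of a periodic center leaf in general position, the holonomies $h^s,h^u:\ell\to\ell'$ between compact center leaves, and the Morse-Smale condition on $f^{\per(\ell)}\mid\ell$ all live in that setting. The conjecture is stated for $\cC_k$, where center leaves may be non-compact and of any dimension, so the criterion simply does not apply in the required generality. Moreover, even in the circle-leaf case the criterion demands that \emph{every} strong unstable leaf of $N$ intersects $\cW^s(\ell)$; uniqueness of the physical measure and $\cW^u$-minimality inside each connected component of $\supp m$ only control unstable leaves in (or iterates accumulating on) the support, not arbitrary unstable leaves of $N$. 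In the paper this global hypothesis is verified through transitivity of the Anosov base of a skew-product, a structure you do not have here, so your reduction of clause (1) to Corollary~\ref{c.generalposition} is incomplete even where that corollary makes sense.
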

Examples of the second situation will appear in a forthcoming paper~\cite{VY2}.

\begin{figure}[h]
\begin{center}
\psfrag{+}{}
\psfrag{+1}{\footnotesize $\lambda^c>0$: $\cW^c$ and $\cW^{cs}$ not abs cont}
\psfrag{+2}{\qquad\footnotesize $\cW^{cu}$ abs cont, generically ?} 
\psfrag{-}{}
\psfrag{-1}{\footnotesize $\lambda^c<0$: $\cW^c$ and $\cW^{cu}$ not abs cont}
\psfrag{-2}{\qquad\footnotesize $\cW^{cs}$ abs cont, generically ?} 
\psfrag{0}{\footnotesize $\lambda^c=0$}
\psfrag{r1}{\footnotesize all foliations smooth}
\psfrag{r2}{\footnotesize (rigidity)}
\psfrag{a}{\footnotesize all foliations generically}
\psfrag{a1}{\footnotesize  not abs cont}
\psfrag{b}{}
\psfrag{b1}{}
\includegraphics[height=2in]{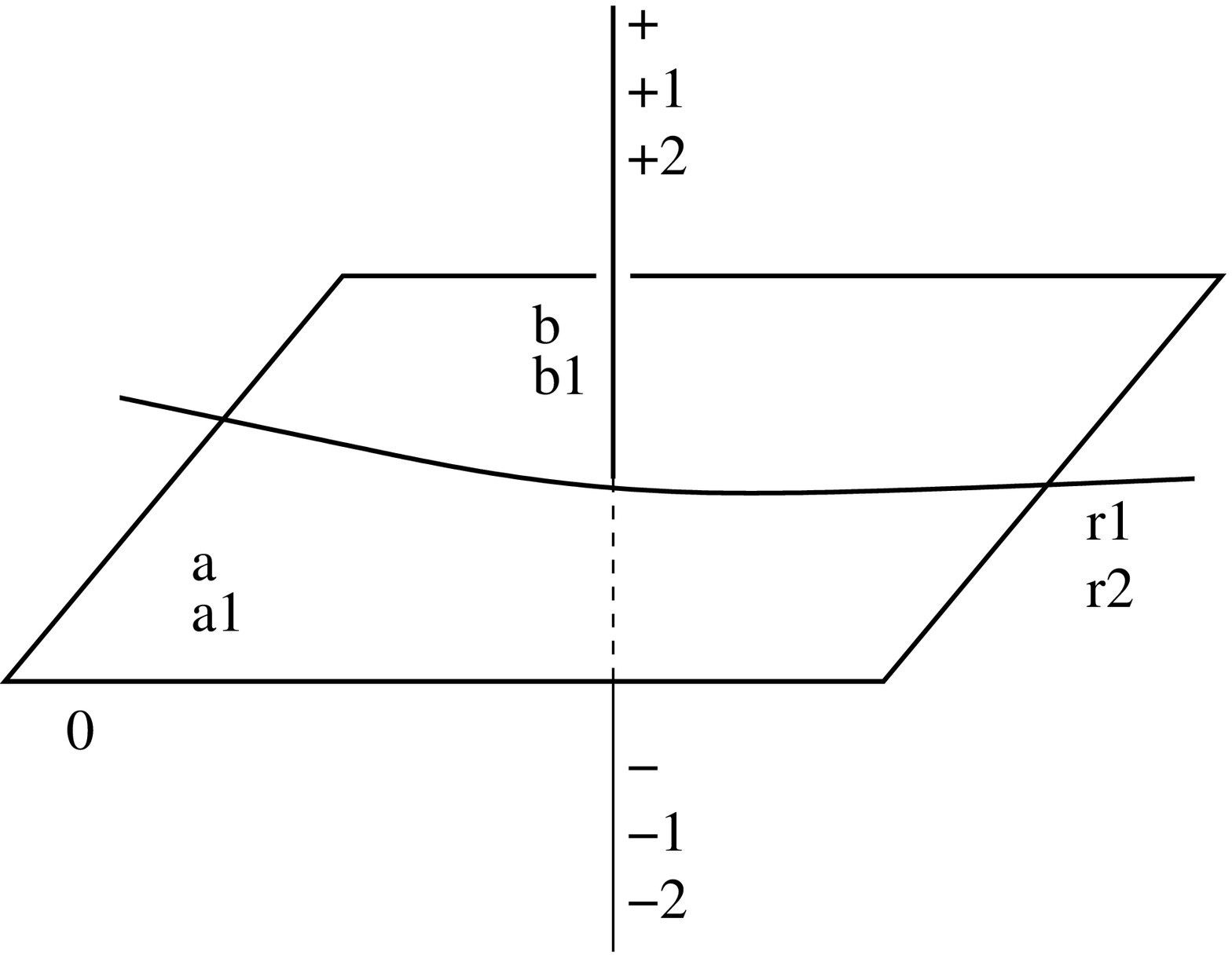}
\caption{\label{f.scenario}}
\end{center}
\end{figure}

\begin{conjecture}\label{cj.2}
Let $k>1$ and $\cV_k$ be the space of partially hyperbolic, dynamically coherent,
volume preserving $C^k$ diffeomorphisms whose center Lyapunov exponents are
negative at almost every point. Then, for an open and dense subset, the center 
stable foliation is absolutely continuous.
\end{conjecture}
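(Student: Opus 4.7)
The plan is to reduce Conjecture~\ref{cj.2} to Corollary~\ref{c.generalposition} by producing its hypotheses on a $C^1$ open and $C^k$ dense subset of $\cV_k$; once this is achieved, the absolute continuity statement follows immediately. Throughout I work in the one-dimensional-center setting of the paper, $\dim E^c = 1$.

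The first two ingredients are essentially off the shelf. Mostly contracting center is automatic for every $f\in\cV_k$: Lebesgue is a Gibbs $u$-state (the strong unstable foliation is absolutely continuous), and its integrated center Lyapunov exponent is negative by hypothesis, so Lemma~\ref{l.characterization} applies. Accessibility holds on a $C^1$ open and $C^k$ dense subset of $\cV_k$ by Dolgopyat--Wilkinson \cite{DW03} together with the $C^1$-density \cite{Av09-dens} of $C^k$ diffeomorphisms in the conservative category; in particular the accessible diffeomorphisms in $\cV_k$ are Lebesgue-ergodic. A conservative analogue of \cite{BDU02}, using the blenders of \cite{HHTU10}, further gives a $C^1$ open and dense subset of $\cV_k$ on which both the strong stable and the strong unstable foliations are minimal; in particular every strong unstable leaf meets $\cW^s(\ell)$ for any periodic center leaf $\ell$.

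The decisive step is to produce, on a $C^k$ dense subset, a periodic center leaf in general position. Since on the dense accessible subset Lebesgue is an ergodic hyperbolic measure with full support, Katok's theorem~\cite{Ka80} yields density of hyperbolic periodic orbits, and hence of periodic center leaves. Fix such a leaf $\ell$ of period $\kappa$. A $C^k$-small volume preserving perturbation supported in a small tubular neighborhood of $\ell$, realized as the time-one map of a divergence-free vector field, replaces $f^\kappa\mid\ell$ by a north-south circle diffeomorphism with a single attractor $a$ and a single repeller $r$; the positive codimension $\dim E^u + \dim E^s$ of $\ell$ in $N$ provides the transverse room needed to arrange the desired Morse--Smale structure on $\ell$ while keeping $\Leb$ invariant. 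An additional $C^k$-small volume preserving perturbation then establishes the non-connection condition (b) of general position for a chosen homoclinic leaf, exactly as in the dissipative proof of Theorem~\ref{t.main2}; this is a codimension-zero transversality property. Since partial hyperbolicity and dynamical coherence persist (in the one-dimensional center setting the center bundle integrates robustly by \cite{HPS77}), Corollary~\ref{c.generalposition} applies to the perturbed diffeomorphism and gives absolute continuity of $\cW^{cs}$ on a full $C^k$ neighborhood. Intersecting with the open and dense subsets produced in the first three steps yields the required open and dense subset of $\cV_k$.

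The hard part will be the conservative perturbation in the last step: in the dissipative setting (Theorem~\ref{t.main2}) one modifies the dynamics on the periodic leaf essentially at will, whereas here the perturbation must be divergence-free, which imposes a genuine constraint. The transverse room coming from the codimension of $\ell$ makes this possible in principle, but explicitly constructing a Hamiltonian-type generator that simultaneously realizes the north-south structure on $\ell$ and the non-connection condition, while keeping the diffeomorphism in $\cV_k$, is the delicate point. The last requirement---persistence of negative center exponent almost everywhere---is secured by the $C^1$-openness of the mostly contracting condition (Andersson~\cite{An10}) together with the continuity of the Lebesgue-integrated center exponent under $C^1$-small perturbations, so in fact the only genuinely new input needed is the conservative surgery on the center leaf.
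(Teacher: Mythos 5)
You are attempting to prove Conjecture~\ref{cj.2}, which the paper states explicitly as an open conjecture and does not prove; so what you offer can only be judged as a new argument, and as it stands it is a strategy outline with genuine gaps rather than a proof. First, the reduction to Corollary~\ref{c.generalposition} silently replaces $\cV_k$ by the much smaller class $\cP_1^k(N)$: the conjecture makes no assumption that the center bundle is one-dimensional, nor that the center leaves are compact circles forming a fiber bundle, and all the machinery you invoke (periodic center leaves, Morse--Smale dynamics on a leaf, general position, Corollary~\ref{c.generalposition}) lives in that restricted setting. Second, the claim that mostly contracting center is ``automatic'' for every $f\in\cV_k$ misuses Lemma~\ref{l.characterization}: that lemma requires negative center exponents for \emph{all} ergodic Gibbs $u$-states, whereas the hypothesis of $\cV_k$ only controls Lebesgue measure; singular Gibbs $u$-states with non-negative center exponents are not excluded. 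Nor does the definition \eqref{eq.mostlycontracting} follow directly from negativity Lebesgue-almost everywhere: it demands a positive measure subset of \emph{every} disk in \emph{every} strong unstable leaf, while the set of points with negative forward center exponent is (essentially) saturated by strong stable leaves, not strong unstable ones, so a full volume set can a priori miss entire unstable disks. This is precisely why the paper, in Remark~\ref{r.variation2}, only extracts \emph{upper leafwise} absolute continuity from the hypothesis of $\cV_k$ via Pesin theory.

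Third, and decisively, the step that would carry the whole argument --- a volume preserving, $C^k$-small perturbation creating a periodic center leaf in general position (north--south dynamics on the leaf plus the non-connection condition (b)) while staying inside $\cV_k$ --- is only asserted, and you yourself flag it as ``the delicate point.'' That conservative surgery is exactly where the dissipative construction behind Theorem~\ref{t.main2} does not transfer and where new input is required; without it the argument has no content beyond the paper's known partial results. The appeal to a conservative version of \cite{BDU02} via \cite{HHTU10} on all of $\cV_k$ is also beyond what those references provide (the paper uses such minimality statements only in dimension $3$ and near skew-products, cf.\ Theorem~\ref{t.mainJ} and the proof of Theorem~\ref{t.main3}). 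It is telling that even in the very special situation of Theorem~\ref{t.main3}, near $f_0=g_0\times\id$, the paper obtains only a non-empty \emph{open} set $\cW_1$ on which the relevant center (un)stable foliation is absolutely continuous --- not an open and dense one; the density asserted in Conjecture~\ref{cj.2} is the genuine difficulty, and your proposal does not close it.
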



Figure~\ref{f.scenario} outlines a scenario for these issues in a relevant special
case, namely near the map $f_0=g_0\times\id$ as in Theorem~\ref{t.main1}.
Accessibility is assumed throughout (but is not needed for the negative results in
$\lambda^c\neq 0$). Generically means for open and dense in $C^k$ topology, $k\ge 1$.
Upper leafwise absolute continuity of the center unstable is known for $\lambda^c>0$,
as we have seen, and we have also found an open subset with (full) absolute 
continuity of the center unstable.

\bibliographystyle{plain}
\bibliography{bib}

\begin{thebibliography}{10}

\bibitem{AbV}
F.~Abdenur and M.~Viana.
\newblock Flavors of partial hyperbolicity.
\newblock Preprint www.preprint.impa.br 2009.

\bibitem{Al00}
J.~F. Alves.
\newblock {SRB} measures for non-hyperbolic systems with multidimensional
  expansion.
\newblock {\em Ann. Sci. {\'E}cole Norm. Sup.}, 33:1--32, 2000.

\bibitem{Al03}
J.~F. Alves.
\newblock {\em Statistical analysis of non-uniformly expanding dynamical
  systems}.
\newblock Lecture Notes 24th Braz. Math. Colloq. IMPA, Rio de Janeiro, 2003.

\bibitem{AA04}
J.~F. Alves and V.~Ara{\'u}jo.
\newblock Hyperbolic times: frequency versus integrability.
\newblock {\em Ergodic Theory Dynam. Systems}, 24:329--346, 2004.

\bibitem{ABV00}
J.~F. Alves, C.~Bonatti, and M.~Viana.
\newblock S{RB} measures for partially hyperbolic systems whose central
  direction is mostly expanding.
\newblock {\em Invent. Math.}, 140:351--398, 2000.

\bibitem{ALP03}
J.~F. Alves, S.~Luzzatto, and V.~Pinheiro.
\newblock Markov structures for non-uniformly expanding maps on compact
  manifolds in arbitrary dimension.
\newblock {\em Electron. Res. Announc. Amer. Math. Soc.}, 9:26--31
  (electronic), 2003.

\bibitem{ALP05}
J.~F. Alves, S.~Luzzatto, and V.~Pinheiro.
\newblock Markov structures and decay of correlations for non-uniformly
  expanding dynamical systems.
\newblock {\em Ann. Inst. H. Poincar\'e Anal. Non Lin\'eaire}, 22:817--839,
  2005.

\bibitem{AlV02}
J.~F. Alves and M.~Viana.
\newblock Statistical stability for robust classes of maps with non-uniform
  expansion.
\newblock {\em Ergodic Theory Dynam. Systems}, 22:1--32, 2002.

\bibitem{An10}
M.~Andersson.
\newblock Robust ergodic properties in partially hyperbolic dynamics.
\newblock {\em Trans. Amer. Math. Soc.}, 362:1831--1867, 2010.

\bibitem{An67}
D.~V. Anosov.
\newblock Geodesic flows on closed {R}iemannian manifolds of negative
  curvature.
\newblock {\em Proc. Steklov Math. Inst.}, 90:1--235, 1967.

\bibitem{Av09-dens}
A.~Avila.
\newblock Density of positive {L}yapunov exponents for quasiperiodic {${\rm
  SL}(2,\mathbb{R})$}-cocycles in arbitrary dimension.
\newblock {\em J. Mod. Dyn.}, 3:631--636, 2009.

\bibitem{ASV}
A.~Avila, J.~Santamaria, and M.~Viana.
\newblock Cocycles over partially hyperbolic maps.
\newblock Preprint www.preprint.impa.br 2008.

\bibitem{AV3}
A.~Avila and M.~Viana.
\newblock Extremal {L}yapunov exponents: an invariance principle and
  applications.
\newblock {\em Inventiones Math.}, 181:115--178, 2010.

\bibitem{AVW1}
A.~Avila, M.~Viana, and A.~Wilkinson.
\newblock Absolute continuity, {L}yapunov exponents, and rigidity.
\newblock Preprint www.preprint.impa.br 2009.

\bibitem{BB03}
A.~Baraviera and C.~Bonatti.
\newblock Removing zero central {L}yapunov exponents.
\newblock {\em Ergod. Th. {\&} Dynam. Sys.}, 23:1655--1670, 2003.

\bibitem{BDU02}
C.~Bonatti, L.~J. D{\'\i}az, and R.~Ures.
\newblock Minimality of strong stable and unstable foliations for partially
  hyperbolic diffeomorphisms.
\newblock {\em J. Inst. Math. Jussieu}, 1:513--541, 2002.

\bibitem{Beyond}
C.~Bonatti, L.~J. D{\'{\i}}az, and M.~Viana.
\newblock {\em Dynamics beyond uniform hyperbolicity}, volume 102 of {\em
  Encyclopaedia of Mathematical Sciences}.
\newblock Springer-Verlag, 2005.

\bibitem{BGV03}
C.~Bonatti, X.~G{\'o}mez-Mont, and M.~Viana.
\newblock G\'en\'ericit\'e d'exposants de {L}yapunov non-nuls pour des produits
  d\'eterministes de matrices.
\newblock {\em Ann. Inst. H. Poincar\'e Anal. Non Lin\'eaire}, 20:579--624,
  2003.

\bibitem{BoV00}
C.~Bonatti and M.~Viana.
\newblock S{RB} measures for partially hyperbolic systems whose central
  direction is mostly contracting.
\newblock {\em Israel J. Math.}, 115:157--193, 2000.

\bibitem{BP74}
M.~Brin and Ya. Pesin.
\newblock Partially hyperbolic dynamical systems.
\newblock {\em Izv. Acad. Nauk. SSSR}, 1:177--212, 1974.

\bibitem{BS02}
M.~Brin and G.~Stuck.
\newblock {\em Introduction to dynamical systems}.
\newblock Cambridge University Press, 2002.

\bibitem{BW08a}
K.~Burns and A.~Wilkinson.
\newblock On the ergodicity of partially hyperbolic systems.
\newblock {\em Annals of Math.}

\bibitem{BST}
J.~Buzzi, O.~Sester, and M.~Tsujii.
\newblock Weakly expanding skew-products of quadratic maps.
\newblock {\em Ergod. Th. {\&} Dynam. Sys.}, 23:1401--1414, 2003.

\bibitem{DW03}
D.~Dolgopyat and A.~Wilkinson.
\newblock Stable accessibility is {$C^1$} dense.
\newblock {\em Ast\'erisque}, 287:33--60, 2003.

\bibitem{Gou06}
S.~Gouezel.
\newblock Decay of correlations for nonuniformly expanding systems.
\newblock {\em Bull. Soc. Mat. France}, 134:1--31, 2006.

\bibitem{HHTU10}
F.~Rodriguez Hertz, M.~A.~Rodriguez Hertz, A.~Tahzibi, and R.~Ures.
\newblock Creation of blenders in the conservative setting.
\newblock {\em Nonlinearity}, 23:211--223, 2010.

\bibitem{HHU07p}
F.~Rodriguez Hertz, M.~A.~Rodriguez Hertz, and R.~Ures.
\newblock Some results on the integrability of the center bundle for partially
  hyperbolic diffeomorphisms.
\newblock In {\em Partially hyperbolic dynamics, laminations, and
  {T}eichm\"uller flow}, volume~51 of {\em Fields Inst. Commun.}, pages
  103--109. Amer. Math. Soc., 2007.

\bibitem{HPS77}
M.~Hirsch, C.~Pugh, and M.~Shub.
\newblock {\em Invariant manifolds}, volume 583 of {\em Lect. Notes in Math.}
\newblock Springer Verlag, 1977.

\bibitem{Ka80}
A.~Katok.
\newblock Lyapunov exponents, entropy and periodic points of diffeomorphisms.
\newblock {\em Publ. Math. IHES}, 51:137--173, 1980.

\bibitem{Man78}
R.~Ma{\~{n}}{\'{e}}.
\newblock Contributions to the stability conjecture.
\newblock {\em Topology}, 17:383--396, 1978.

\bibitem{Man88}
R.~Ma{\~{n}}{\'{e}}.
\newblock A proof of the ${C}^1$ stability conjecture.
\newblock {\em Publ. Math. I.H.E.S.}, 66:161--210, 1988.

\bibitem{NT01}
V.~Ni{\c{t}}ic{\u{a}} and A.~T{\"o}r{\"o}k.
\newblock An open dense set of stably ergodic diffeomorphisms in a neighborhood
  of a non-ergodic one.
\newblock {\em Topology}, 40:259--278, 2001.

\bibitem{OW98}
D.~Ornstein and B.~Weiss.
\newblock On the {B}ernoulli nature of systems with some hyperbolic structure.
\newblock {\em Ergodic Theory Dynam. Systems}, 18:441--456, 1998.

\bibitem{PS82}
Ya. Pesin and Ya. Sinai.
\newblock {G}ibbs measures for partially hyperbolic attractors.
\newblock {\em Ergod. Th. {\&} Dynam. Sys.}, 2:417--438, 1982.

\bibitem{Pe76}
Ya.~B. Pesin.
\newblock Families of invariant manifolds corresponding to non-zero
  characteristic exponents.
\newblock {\em Math. USSR. Izv.}, 10:1261--1302, 1976.

\bibitem{Pi06}
V.~Pinheiro.
\newblock Sinai-{R}uelle-{B}owen measures for weakly expanding maps.
\newblock {\em Nonlinearity}, 19:1185--1200, 2006.

\bibitem{PS89}
C.~Pugh and M.~Shub.
\newblock Ergodic attractors.
\newblock {\em Trans. Amer. Math. Soc.}, 312:1--54, 1989.

\bibitem{PS97}
C.~Pugh and M.~Shub.
\newblock Stably ergodic dynamical systems and partial hyperbolicity.
\newblock {\em J. Complexity}, 13:125--179, 1997.

\bibitem{PVW}
C.~Pugh, M.~Viana, and A.~Wilkinson.
\newblock Absolute continuity of foliations.
\newblock In preparation.

\bibitem{Ro52}
V.~A. Rokhlin.
\newblock On the fundamental ideas of measure theory.
\newblock {\em A. M. S. Transl.}, 10:1--52, 1952.
\newblock Transl. from Mat. Sbornik 25 (1949), 107--150.

\bibitem{RW01}
D.~Ruelle and A.~Wilkinson.
\newblock Absolutely singular dynamical foliations.
\newblock {\em Comm. Math. Phys.}, 219:481--487, 2001.

\bibitem{Sh87}
M.~Shub.
\newblock {\em Global stability of dynamical systems}.
\newblock Springer Verlag, 1987.

\bibitem{SS85}
M.~Shub and D.~Sullivan.
\newblock Expanding endomorphisms of the circle revisited.
\newblock {\em Ergodic Theory Dynam. Systems}, 5:285--289, 1985.

\bibitem{SW00}
M.~Shub and A.~Wilkinson.
\newblock Pathological foliations and removable zero exponents.
\newblock {\em Invent. Math.}, 139:495--508, 2000.

\bibitem{Sm67}
S.~Smale.
\newblock Differentiable dynamical systems.
\newblock {\em Bull. Am. Math. Soc.}, 73:747--817, 1967.

\bibitem{Tsu05}
M.~Tsujii.
\newblock Physical measures for partially hyperbolic surface endomorphisms.
\newblock {\em Acta Math.}, 194:37--132, 2005.

\bibitem{Va07}
C.~H. V{\'a}squez.
\newblock Statistical stability for diffeomorphisms with dominated splitting.
\newblock {\em Ergodic Theory Dynam. Systems}, 27:253--283, 2007.

\bibitem{Almost}
M.~Viana.
\newblock Almost all cocycles over any hyperbolic system have nonvanishing
  {L}yapunov exponents.
\newblock {\em Ann. of Math.}, 167:643--680, 2008.

\bibitem{VY2}
M.~Viana and J.~Yang.
\newblock Contributions to the theory of maps with mostly contracting center
  direction.
\newblock In preparation.

\end{thebibliography}

\end{document}